\documentclass[
	parskip=half
	, fontsize=11pt,
	, listof=entryprefix]{scrartcl}
\usepackage[
  a4paper,
  left=2.5cm,
  right=2.5cm,
  top=3cm,
  bottom=4cm
]{geometry}

\author{	Christian Bender and Benedikt Simon Flierl\\[8pt]
	\textit{Department of Mathematics, Saarland University}
}
\title{A Deep BSDE Method for a Class of Strongly Coupled FBSDEs}

\usepackage{amsmath, amsthm, amssymb, mathtools}

\let\originalleft\left
\let\originalright\right
\renewcommand{\left}{\mathopen{}\mathclose\bgroup\originalleft}
\renewcommand{\right}{\aftergroup\egroup\originalright}

\newcounter{main}[section]

\theoremstyle{plain}
\newtheorem{theorem}{Theorem}[section]
\newtheorem{lem}[theorem]{Lemma}

\newtheorem{assumption}[theorem]{Assumption}

\theoremstyle{definition}

\newtheorem{remark}[theorem]{Remark}

\newtheorem{example}[theorem]{Example}

\makeatletter
\let\c@figure\c@main

\makeatother

\usepackage{xcolor}
\usepackage{graphicx}

\usepackage{array}
\usepackage{longtable}
\usepackage{arydshln}

\newcommand{\crossone}{\times_{p=1}}
\newcommand{\crosstwo}{\times_{p=2}}
\newcommand{\crossfour}{\times_{p=4}}

\counterwithin*{figure}{section}

\allowdisplaybreaks

\vfuzz \hfuzz
\usepackage[backend=biber, style=numeric, giveninits=true, maxbibnames=999, maxcitenames=2]{biblatex}
\usepackage{hyperref}

\bibliography{references}

\begin{document}
	\maketitle
	\pagestyle{headings}
	\begin{abstract}
		We investigate a variant of the deep BSDE method introduced by \citeauthor{E_Han_Jentzen_2017}~\cite{E_Han_Jentzen_2017,Han_Jentzen_E_2018}. The key novelty is that we establish an a-posteriori convergence result for the approximation of strongly coupled forward-backward stochastic differential equations (FBSDEs), i.\,e., our result holds without any assumptions on small time horizons, monotonicity or weak coupling that are typically imposed in the literature on the deep BSDE method. Instead, we rely on smoothness assumptions on the coefficients and cover FBSDEs in which the coupling of the BSDE into the SDE depends on both the backward component~$Y$ and the control component~$Z$. Numerical experiments illustrate the theoretical results and demonstrate the applicability of the proposed approach.
		
		\medskip
		\noindent\emph{Keywords}: deep BSDE, strongly coupled FBSDE, a-posteriori estimate, convergence
		\medskip
        \noindent\emph{MSC 2020}: 60H10, 60H30, 65C05, 65C30, 68T07 
	\end{abstract}
	
	\section{Introduction}
	In this paper, we study the numerical approximation of coupled forward-backward stochastic differential equations (FBSDEs) of the form
    \begin{align}\label{eq:FBSDE}
        \left\{ \begin{aligned}
            X_t &= x_0+\int_0^tb(s,X_s,Y_s,Z_s)\,\mathrm{d}s+\int_0^t\sigma(s,X_s,Y_s)\,\mathrm{d}W_s,\\
            Y_t &= g(X_T)-\int_t^TF(s,X_s,Y_s,Z_s)\mathrm{d}s-\int_t^TZ_s\,\mathrm{d}W_s,
        \end{aligned} \right.
    \end{align}
    given deterministic functions
    $b\colon[0,T]\times\mathbb{R}^d\times\mathbb{R}\times(\mathbb{R}^d)^*\to\mathbb{R}^d$,
    $\sigma\colon[0,T]\times\mathbb{R}^d\times\mathbb{R}\to\mathbb{R}^{d\times d}$,
    $F\colon[0,T]\times\mathbb{R}^d\times\mathbb{R}\times(\mathbb{R}^d)^*\to\mathbb{R}$,
    $g\colon\mathbb{R}^d\to\mathbb{R}$
    and an initial value $x_0\in\mathbb{R}^d$, where $T\in\mathbb{R}_{>0}$ is a fixed time horizon and $d\in\mathbb{N}$. The equation is given on a complete filtered probability space $(\Omega,\mathcal{F},(\mathcal{F}_t)_{t\geq0},\mathbb{P})$, which carries a $d$\nobreakdash-dimensional standard Brownian motion~$(W_t)_{t\geq0}$, such that $(\mathcal{F}_t)_{t\geq0}$ is the natural filtration of~$(W_t)_{t\geq0}$ augmented with all the $\mathbb{P}$-null sets. Recall that a solution of the FBSDE~\eqref{eq:FBSDE} is a triple $(X,Y,Z)\coloneqq(X_t,Y_t,Z_t)_{t\geq0}$ of $(\mathcal{F}_t)_{t\geq0}$-progressively measurable stochastic processes with values in $\mathbb{R}^d\times\mathbb{R}\times(\mathbb{R}^d)^*$, such that $X$ and $Y$ are continuous,
    $\mathbb{E}\bigl[\sup_{t\in[0,T]}\lvert X_t\rvert^2\bigr],\mathbb{E}\bigl[\sup_{t\in[0,T]}\lvert Y_t\rvert^2\bigr],\int_0^T\mathbb{E}\bigl[\lvert Z_t\rvert^2\bigr]<\infty$,
    and, for any $t\in[0,T]$, FBSDE~\eqref{eq:FBSDE} holds $\mathbb{P}$-almost surely.

    FBSDEs originate from the study of the Pontryagin maximum principle for stochastic control by \citeauthor{Bismut_1978}~\cite{Bismut_1978} and  \citeauthor{Bensoussan_1982}~\cite{Bensoussan_1982}, see also \cite{Yong_Zhou_1999}. Besides several applications to mathematical finance \cite[Chapter~8]{Ma_Yong_2007}, FBSDEs are also closely connected to quasi-linear parabolic partial differential equations (PDEs) via non-linear Feynman-Kac representations \cite{Pardoux_Peng_1992,Pardoux_Tang_1999} and the Ma-Protter-Yong four-step scheme~\cite{Ma_Protter_Yong_1994}.

    Well-posedness of FBSDEs can be established via fixed point iteration \cite{Antonelli_1993, Pardoux_Tang_1999}  either for small time horizons or under certain restrictions on the type of the coupling (weak coupling) of $X$ into $(F,g)$ or on $(Y,Z)$ into $(b,\sigma)$, or under monotonicity assumptions on the coefficients. Alternatively, under sufficient smoothness and growth conditions on the coefficients, the FBSDE can be decoupled via the solution to the associated   quasi-linear PDE, leading to well-posedness via the four-step  scheme of \citeauthor{Ma_Protter_Yong_1994}~\cite{Ma_Protter_Yong_1994}. The underlying idea of decoupling the equations finally led to the unifying approach to well-posedness of FBSDEs via decoupling fields by \citeauthor{Ma_Wu_Zhang_Zhang_2015}~\cite{Ma_Wu_Zhang_Zhang_2015}.

   The close connection between FBSDEs and parabolic PDEs can also be exploited either way for the design of numerical schemes. E.\,g., by mimicking the four-step scheme, numerical techniques for PDEs can be made viable for the approximation of FBSDEs \cite{Douglas_Ma_Protter_1996,Ma_Shen_Zhao_2008}, whereas directly approximating the FBSDEs and resorting to the Feynman-Kac representation lead to new probabilistic schemes for quasi-linear parabolic PDEs \cite{Delarue_Menozzi_2006}. For a survey on numerical methods for BSDEs and FBSDEs, we refer to \citeauthor{Chessari_Kawai_Shinozaki_Yamada_2023}~\cite{Chessari_Kawai_Shinozaki_Yamada_2023}.

   In this paper, we focus on the deep BSDE method which has been introduced by \citeauthor{E_Han_Jentzen_2017}~\cite{E_Han_Jentzen_2017,Han_Jentzen_E_2018} for decoupled FBSDEs (i.\,e., for $(b,\sigma)$ independent of $(Y,Z)$). We note that this method is closely related to the physics informed neural networks \cite{Raissi_Perdikaris_Karniadakis_2019} in PDE numerics, see \cite{Nuesken_Richter_2023} for a comparison. The key idea of the deep BSDE method is to re-write the BSDE for $Y$ in forward form as
   \begin{align*}
   	Y_t=y_0+\int_0^tF(s,X_s,Y_s,Z_s)\,\mathrm{d}s+\int_0^t Z_s\,\mathrm{d}W_s,
   \end{align*}
    and to consider the control problem to choose $(y_0,Z)$ in such a way that the terminal loss
    \begin{align*}
	    	\mathbb{E}[\lvert Y_T-g(X_T)\rvert^2]
    \end{align*}
	becomes minimal. This idea can be traced back to \citeauthor{Cvitanic_Zhang_2005}~\cite{Cvitanic_Zhang_2005} who theoretically analyze the direction of steepest descent for this minimization problem in continuous time. The deep BSDE method \cite{E_Han_Jentzen_2017,Han_Jentzen_E_2018} makes this basic idea practical, by discretizing this control problem in time, parameterizing $Z$ by a neural network, and applying stochastic gradient descent to minimize the terminal loss. In this approach, the terminal loss serves as a surrogate for the unobservable  $L^2$-error between true solution and numerical approximation. Hence, in order to justify the deep BSDE method, a-posteriori estimates for controlling the true $L^2$-error in terms of the terminal loss are required. For decoupled FBSDEs, such a-posteriori estimates had already been available in the literature \cite{Bender_Steiner_2013}. For coupled FBSDEs, the deep BSDE method has first been analyzed by \citeauthor{Han_Long_2020}~\cite{Han_Long_2020}. They consider the special case without $Z$-coupling and impose standard assumptions on a short time horizon or monotonicity or weak coupling (for brevity, we will refer to such a set of conditions as the weak coupling case from now on). \citeauthor{Han_Long_2020}~\cite{Han_Long_2020} derive suitable a-posteriori estimates for discrete-time versions of the coupled FBSDE system, which are finally combined with bounds for the time-discretization error due to \citeauthor{Bender_Zhang_2008}~\cite{Bender_Zhang_2008}. Recently, the approach of \citeauthor{Han_Long_2020}~\cite{Han_Long_2020} has been extended to the case of $Z$-coupling into the drift coefficient $b$ by \citeauthor{Negyesi_Huang_Oosterlee_2026}~\cite{Negyesi_Huang_Oosterlee_2026}; see Remark \ref{rem:weak_coupling} for further discussion. Several variants and extensions of the deep BSDE method can be found in the literature (e.\,g.,  \cite{Gao_Gao_Hu_Zhu_2023,Huang_Negyesi_Oosterlee_2025,Reisinger_Stockinger_Zhang_2024}), which typically rely on a weak coupling assumption in one way or another. A notable exception is the work by \citeauthor{Jiang_Li_2021}~\cite{Jiang_Li_2021}, who  assume sufficient smoothness  conditions  for the associated parabolic PDE to be well-posed, but consider the case in which coupling of the BSDE into the SDE is via $Y$ and through the drift coefficient $b$ only.
    \citeauthor{Andersson_Andersson_Oosterlee_2023}~\cite{Andersson_Andersson_Oosterlee_2023} provide a numerical test case with strong coupling, in which the deep BSDE method empirically fails to converge, and suggest a more robust variant of the deep BSDE method for FBSDEs arising in stochastic control. They attribute this failure of convergence to the $Z$-coupling: ``This is only possible if the coupling of $Z$ in $b$ is strong enough, so that $g(X_T)$ can be efficiently
    controlled by $Z$'' \cite[Section~2.3, p.~A232]{Andersson_Andersson_Oosterlee_2023}.

    In this paper, we prove, to the best of our knowledge, the first a-posteriori estimates that justify the use of the deep BSDE method for FBSDEs with strong $Z$-coupling through the drift coefficient (and we, additionally, allow strong $Y$- and $X$-coupling in all coefficient functions). Similarly to \citeauthor{Jiang_Li_2021}~\cite{Jiang_Li_2021} we impose regularity and growth conditions on the coefficients which guarantee existence of a sufficiently `nice' classical solution to the associated quasi-linear parabolic PDE. For the proof of our main result (Theorem~\ref{theorem:main}), we identify an auxiliary decoupled FBSDE, to which we can apply the a-posteriori estimates of \citeauthor{Bender_Steiner_2013}~\cite{Bender_Steiner_2013} and, then, analyze the error between the solutions to the auxiliary decoupled FBSDE and the original coupled FBSDE. Our results suggest that the failure of the deep BSDE method in the test case in \cite{Andersson_Andersson_Oosterlee_2023} may  rather be attributed to the combination of of growth properties of the coefficient functions with the strong $Z$-coupling. We also demonstrate our results numerically for a test case, in which the weak coupling conditions imposed in \cite{Negyesi_Huang_Oosterlee_2026} are violated.

	This paper is organized as follows. Section~\ref{sec:setting} introduces the  framework and the standing assumptions. In Section~\ref{sec:deep_bsde}, we discuss the proposed deep BSDE algorithm and state the main convergence theorem. Section~\ref{sec:numerical_experiments} illustrates the method by numerical examples. The proof of the main result is given in Section~\ref{sec:proof_main_theorem}. Section~\ref{sec:sufficient_conditions} presents easy-to-verify sufficient conditions on the coefficients ensuring the applicability of our results. 
	
	\section{General Setting and Standing Assumptions}\label{sec:setting}
	In what follows, we denote by $|\cdot|$ the Euclidean norm for (column and row) vectors and the Frobenius norm for matrices. When vectors are identified with single-column or single-row matrices, these norms coincides. Consequently, properties of the Frobenius norm, such as sub-multiplicativity, apply without further distinction to products of vectors and matrices.
	
	We impose the following assumptions on $b$, $\sigma$, $\sigma^{-1}$, $\sigma\sigma^\top$, $F$, and $g$, where we formally define the functions $\sigma^{-1}$ and $\sigma\sigma^\top$ as
	\begin{align*}
		\sigma^{-1}			&\colon[0,T]\times\mathbb{R}^d\times\mathbb{R}\to\mathbb{R}^{d\times d},\quad(t,x,y)\mapsto\sigma(t,x,y)^{-1},\\
		\sigma\sigma^\top	&\colon[0,T]\times\mathbb{R}^d\times\mathbb{R}\to\mathbb{R}^{d\times d},\quad(t,x,y)\mapsto\sigma(t,x,y)\sigma(t,x,y)^\top.
	\end{align*}
	
	\begin{assumption}\label{ass:functions}
		\begin{itemize}
			\item[(i)]
				The matrix inverse $\sigma^{-1}(t,x,y)$ of $\sigma(t,x,y)$ exists for all $(t,x,y)\in[0,T]\times\mathbb{R}^d\times\mathbb{R}$.
			\item[(ii)]
				There exist constants $H_b,H_\sigma,H_{\sigma^{-1}},H_{\sigma\sigma^\top},H_F,H_g\in\mathbb{R}_{>0}$ such that for any two $(t,x,y,z)$, $(t',x',y',z')\in[0,T]\times\mathbb{R}^d\times\mathbb{R}\times(\mathbb{R}^d)^*$ it holds that
				\begin{align*}
					\lvert b(t,x,y,z)-b(t',x',y',z')\rvert&\leq H_b\bigl(\lvert t-t'\rvert^{1/2}+\lvert x-x'\rvert+\lvert y-y'\rvert+\lvert z-z'\rvert\bigr),\\
					\lvert\sigma(t,x,y)-\sigma(t',x',y')\rvert&\leq H_\sigma\bigl(\lvert t-t'\rvert^{1/2}+\lvert x-x'\rvert+\lvert y-y'\rvert\bigr),\\
					\lvert\sigma^{-1}(t,x,y)-\sigma^{-1}(t',x',y')\rvert&\leq H_{\sigma^{-1}}\bigl(\lvert t-t'\rvert^{1/2}+\lvert x-x'\rvert+\lvert y-y'\rvert\bigr),\\
					\lvert(\sigma\sigma^\top)(t,x,y)-(\sigma\sigma^\top)(t',x',y')\rvert&\leq H_{\sigma\sigma^\top}\bigl(\lvert t-t'\rvert^{1/2}+\lvert x-x'\rvert+\lvert y-y'\rvert\bigr),\\
					\lvert F(t,x,y,z)-F(t',x',y',z')\rvert&\leq H_F\bigl(\lvert t-t'\rvert^{1/2}+\lvert x-x'\rvert+\lvert y-y'\rvert+\lvert z-z'\rvert\bigr),\\
					\lvert g(x)-g(x')\rvert&\leq H_g\lvert x-x'\rvert,
				\end{align*}
				i.\,e., the functions~$b$ and~$F$ are $\frac{1}{2}$-Hölder continuous in~$t$ and Lipschitz continuous in $(x,y,z)$ with constants~$H_b$ and~$H_F$, the functions $\sigma$, $\sigma^{-1}$ and $\sigma\sigma^\top$ are $\frac{1}{2}$-Hölder continuous in~$t$ and Lipschitz continuous in $(x,y)$ with constants~$H_\sigma$, $H_{\sigma^{-1}}$ and $H_{\sigma\sigma^\top}$, and the function~$g$ is Lipschitz continuous in~$x$ with constant~$H_g$.
			\item[(iii)]
				The functions $b$, $\sigma$, $\sigma^{-1}$, and $\sigma\sigma^\top$ are bounded by constants $K_b,K_\sigma,K_{\sigma^{-1}},K_{\sigma\sigma^\top}\in\mathbb{R}_{>0}$.
		\end{itemize}
	\end{assumption}
	
	\begin{remark}\label{rem:Lipschitz}
		\begin{itemize}
			\item[(i)]
				Since the Frobenius norm is sub-multiplicative, the boundedness of $\sigma$ already implies the boundedness of $\sigma\sigma^\top$.
			\item[(ii)]
				In connection with the boundedness of~$\sigma$, it is easily seen that the $\frac{1}{2}$-Hölder continuity in~$t$ and the Lipschitz continuity in $(x,y)$ of $\sigma$ are transferred to $\sigma\sigma^\top$. Indeed, for each two $(t,x,y),(t',x',y')\in[0,T]\times\mathbb{R}^d\times\mathbb{R}$, applying the triangle inequality and the sub-multiplicativity of the Frobenius norm, it follows that
				\begin{align*}
					&\lvert(\sigma\sigma^\top)(t,x,y)-(\sigma\sigma^\top)(t',x',y')\rvert\\
					&=\begin{multlined}[t]
						\lvert\sigma(t,x,y)\sigma(t,x,y)^\top-\sigma(t,x,y)\sigma(t',x',y')^\top\\
						+\sigma(t,x,y)\sigma(t',x',y')^\top-\sigma(t',x',y')\sigma(t',x',y')^\top\rvert
					\end{multlined}\\
					&\leq\lvert\sigma(t,x,y)\rvert\lvert\sigma(t,x,y)-\sigma(t',x',y')\rvert+\lvert\sigma(t,x,y)-\sigma(t',x',y')\rvert\lvert\sigma(t',x',y')\rvert\\
					&\leq2K_\sigma H_\sigma\bigl(\lvert t-t'\rvert^{1/2}+\lvert x-x'\rvert+\lvert y-y'\rvert\bigr).
				\end{align*}
			\item[(iii)]
				Similarly, but now relying on the boundedness of $\sigma^{-1}$, the $\frac{1}{2}$-Hölder continuity in $t$ and the Lipschitz continuity in $(x,y)$ of $\sigma$ also carry over to $\sigma^{-1}$: For each two $(t,x,y),(t',x',y')\in[0,T]\times\mathbb{R}^d\times\mathbb{R}$, it holds that
				\begin{align*}
					\lvert\sigma^{-1}(t,x,y)-\sigma^{-1}(t',x',y')\rvert
					&=\bigl\lvert\sigma^{-1}(t',x',y')\bigl(\sigma(t',x',y')-\sigma(t,x,y)\bigr)\sigma^{-1}(t,x,y)\bigr\rvert\\
					&\leq\lvert\sigma^{-1}(t',x',y')\rvert\lvert\sigma(t',x',y')-\sigma(t,x,y)\rvert\lvert\sigma^{-1}(t,x,y)\rvert\\
					&\leq K_{\sigma^{-1}}^2H_\sigma\bigl(\lvert t-t'\rvert^{1/2}+\lvert x-x'\rvert+\lvert y-y'\rvert\bigr).
				\end{align*}
		\end{itemize}
	\end{remark}
	
	\begin{remark}
		By standard results, it holds that the existence of $\sigma^{-1}$ together with the boundedness of $\sigma$ and $\sigma^{-1}$ is equivalent to the uniform ellipticity condition
		\begin{align}\label{eq:elliptic}
			\forall(t,x,y)\in[0,T]\times\mathbb{R}^d\times\mathbb{R}\colon\quad\nu I\leq\sigma(t,x,y)\sigma(t,x,y)^\top\leq\mu I,
		\end{align}
		where $\nu\leq\mu$ are some positive real constants and $I=\mathrm{diag}(1,\ldots,1)\in\mathbb{R}^{d\times d}$ denotes the identity matrix. Thus, together with Remark~\ref{rem:Lipschitz}, it follows that Assumption~\ref{ass:functions} can be reduced to the Hölder and Lipschitz conditions on $b$, $\sigma$, $F$, and $g$, the boundedness condition on $b$, and the above uniform ellipticity condition.
	\end{remark}
	
	With the following additional assumption we ensure that FBSDE~\eqref{eq:FBSDE} is uniquely solvable.
	\begin{assumption}\label{ass:sol_PDE}
			\begin{itemize}
			\item[(a)]
				The quasi-linear parabolic PDE
				\begin{align}\label{eq:PDE}
					\left\{\begin{aligned}
						u_t(t,x)&=F(t,x,u(t,x),v(t,x))-\frac{1}{2}\mathrm{tr}\bigl[u_{xx}(t,x)(\sigma\sigma^\top)(t,x,u(t,x))\bigr]\\
						&\quad-u_x(t,x)b(t,x,u(t,x),v(t,x)),\quad(t,x)\in[0,T]\times\mathbb{R}^d,\\
						u(T,x)&=g(x),\quad x\in\mathbb{R}^d,
					\end{aligned}\right.
				\end{align}
				where $v\colon[0,T]\times\mathbb{R}^d\to(\mathbb{R}^d)^*$ is defined via
				\begin{align*}
					v(t,x)\coloneqq u_x(t,x)\sigma(t,x,u(t,x)),
				\end{align*}
				admits a classical solution $u\colon[0,T]\times\mathbb{R}^d\to\mathbb{R}$ such that its derivatives $u_x$ and $u_{xx}$ are bounded by constants $K_{u_x}\in\mathbb{R}_{>0}$ and $K_{u_{xx}}\in\mathbb{R}_{>0}$.
				Here, considering $x=(x_1,\ldots,x_d)^\top$, the derivatives $u_t$, $u_x$, and $u_{xx}$ of $u$ are given as
				\begin{align*}
					u_t(t,x)&\coloneqq\frac{\partial u(t,x)}{\partial t}\in\mathbb{R},\\
					u_x(t,x)&\coloneqq(u_{x_i})_{i=1,\ldots,d}\coloneqq\left(\frac{\partial u(t,x)}{\partial x_i}\right)_{i=1,\ldots,d}\in(\mathbb{R}^d)^*,\\
					u_{xx}(t,x)&\coloneqq(u_{x_ix_j}(t,x))_{i,j=1,\ldots,d}\coloneqq\left(\frac{\partial^2 u(t,x)}{\partial x_i\partial x_j}\right)_{i,j=1,\ldots,d}\in\mathbb{R}^{d\times d}.
				\end{align*}
			\item[(b)]
				There exist constants $H_u,H_{u_x},H_{u_{xx}}\in\mathbb{R}_{>0}$ and an $\alpha\in(0,1]$
		 such that the functions $u$, $u_x$ and $u_{xx}$ from (a) fulfill the following Hölder and Lipschitz conditions: For all $(t,x),(t',x')\in[0,T]\times\mathbb{R}^d$ it holds that
				\begin{align*}
					\lvert u(t,x)-u(t',x')\rvert&\leq H_u\bigl(\lvert t-t'\rvert^{1/2}+\lvert x-x'\rvert\bigr),\\
					\lvert u_x(t,x)-u_x(t',x')\rvert&\leq H_{u_x}\bigl(\lvert t-t'\rvert^{1/2}+\lvert x-x'\rvert\bigr),\\
					\lvert u_{xx}(t,x)-u_{xx}(t',x')\rvert&\leq H_{u_{xx}}\bigl(\lvert t-t'\rvert^{\alpha/2}+\lvert x-x'\rvert^\alpha\bigr),
				\end{align*}
				i.\,e., $u$ and $u_x$ are $\frac{1}{2}$-Hölder continuous in~$t$ and Lipschitz continuous in~$x$ with constants~$H_u$ and $H_{u_x}$ and~$u_{xx}$ is $\frac{\alpha}{2}$-Hölder continuous in~$t$ and $\alpha$-Hölder continuous in~$x$ with constant~$H_{u_{xx}}$.
		\end{itemize}
	\end{assumption}
	
	\begin{remark}
		\begin{itemize}
			\item[(a)] Observe that $u_{xx}(t,x)=u_{xx}(t,x)^\top$ since for a classical solution $u$ of PDE~\eqref{eq:PDE} the derivative~$u_{xx}$ is assumed to be continuous.
			\item[(b)] The Lipschitz continuity of~$u$ and~$u_x$ in~$x$ already follows by the boundedness of~$u_x$ and~$u_{xx}$, respectively.
		\end{itemize}
	\end{remark}
	
	Assumptions~\ref{ass:functions} and~\ref{ass:sol_PDE} will be in force throughout the whole paper without further notice.
	
	\begin{lem}\label{lem:v}
		There exist (minimal) constants $H_v,K_v\in\mathbb{R}_{\geq0}$ with
		\begin{align*}
			H_v\leq K_\sigma H_{u_x}+K_{u_x}H_\sigma(1+H_u)
			\quad\text{and}\quad
			K_v\leq K_{u_x}K_\sigma
		\end{align*}
		such that
		\begin{itemize}
			\item[(a)]
				for all $(t,x),(t',x')\in[0,T]\times\mathbb{R}^d$ it holds that
				\begin{align*}
					\lvert v(t,x)-v(t',x')\rvert&\leq H_v\bigl(\lvert t-t'\rvert^{1/2}+\lvert x-x'\rvert\bigr),
				\end{align*}
				i.\,e., the function $v$ is $\frac{1}{2}$-Hölder continuous in~$t$ and Lipschitz continuous in~$x$ with common constant $H_v$, and 
			\item[(b)]
				$v$ is bounded by $K_v$.
		\end{itemize}
	\end{lem}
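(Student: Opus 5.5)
The plan is to obtain both bounds from the product structure $v(t,x)=u_x(t,x)\,\sigma(t,x,u(t,x))$, using sub-multiplicativity of the Frobenius norm (row vectors viewed as $1\times d$ matrices, as explained in Section~\ref{sec:setting}). Once suitable finite constants are found, the minimal constants $H_v$ and $K_v$ exist automatically: the set of admissible constants in each inequality is a nonempty closed half-line in $\mathbb{R}_{\geq0}$, so it has a least element, and this least element is at most any admissible constant we exhibit.

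For (b), fix $(t,x)$. Assumption~\ref{ass:sol_PDE}(a) gives $\lvert u_x(t,x)\rvert\leq K_{u_x}$, and Assumption~\ref{ass:functions}(iii) gives $\lvert\sigma(t,x,u(t,x))\rvert\leq K_\sigma$. Hence $\lvert v(t,x)\rvert\leq K_{u_x}K_\sigma$, which proves (b) with $K_v\leq K_{u_x}K_\sigma$.

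For (a), fix $(t,x),(t',x')$ and write $\delta\coloneqq\lvert t-t'\rvert^{1/2}+\lvert x-x'\rvert$. We add and subtract $u_x(t',x')\sigma(t,x,u(t,x))$ and apply the triangle inequality:
\begin{align*}
\lvert v(t,x)-v(t',x')\rvert
&\leq\lvert u_x(t,x)-u_x(t',x')\rvert\,\lvert\sigma(t,x,u(t,x))\rvert\\
&\quad+\lvert u_x(t',x')\rvert\,\lvert\sigma(t,x,u(t,x))-\sigma(t',x',u(t',x'))\rvert.
\end{align*}
The first term is at most $H_{u_x}\delta\cdot K_\sigma$, by Assumption~\ref{ass:sol_PDE}(b) and Assumption~\ref{ass:functions}(iii).

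For the second term, Assumption~\ref{ass:functions}(ii) gives
\[
\lvert\sigma(t,x,u(t,x))-\sigma(t',x',u(t',x'))\rvert\leq H_\sigma\bigl(\delta+\lvert u(t,x)-u(t',x')\rvert\bigr).
\]
Assumption~\ref{ass:sol_PDE}(b) bounds $\lvert u(t,x)-u(t',x')\rvert\leq H_u\delta$, so this difference is at most $H_\sigma(1+H_u)\delta$. Multiplying by $\lvert u_x(t',x')\rvert\leq K_{u_x}$ bounds the second term by $K_{u_x}H_\sigma(1+H_u)\delta$.

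Summing the two terms yields (a) with constant $K_\sigma H_{u_x}+K_{u_x}H_\sigma(1+H_u)$, and hence the stated bound on $H_v$. There is no real obstacle here. The only point requiring care is the composition step: we must feed the joint $\tfrac12$-Hölder/Lipschitz estimate for $u$ into the $y$-argument of $\sigma$, and it is exactly this step that produces the factor $(1+H_u)$.
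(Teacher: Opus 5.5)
Your proposal is correct and follows essentially the same argument as the paper. Like the paper, you add and subtract $u_x(t',x')\sigma(t,x,u(t,x))$, use sub-multiplicativity, and insert the bounds on $\sigma$ and $u_x$ together with the H\"older/Lipschitz estimates for $u_x$, $\sigma$ and $u$, which produces the constant $K_\sigma H_{u_x}+K_{u_x}H_\sigma(1+H_u)$. Your short justification that minimal constants exist (the admissible set is a nonempty closed half-line) is a small addition that the paper leaves implicit.
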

	\begin{proof}
		Let $(t,x),(t',x')\in[0,T]\times\mathbb{R}^d$. The Hölder/Lipschitz condition follows from
		\begin{align*}
			&\lvert u_x(t,x)\sigma(t,x,u(t,x))-u_x(t',x')\sigma(t',x',u(t',x'))\rvert\\
			&\leq\begin{multlined}[t]
				\lvert u_x(t,x)\sigma(t,x,u(t,x))-u_x(t',x')\sigma(t,x,u(t,x))\rvert\\
				+\lvert u_x(t',x')\sigma(t,x,u(t,x))-u_x(t',x')\sigma(t',x',u(t',x'))\rvert
			\end{multlined}\\
			&\leq(K_\sigma H_{u_x}+K_{u_x}H_\sigma(1+H_u))\bigl(\lvert t-t'\rvert^{1/2}+\lvert x-x'\rvert\bigr).
		\end{align*}
		The boundedness condition follows directly by the sub-multiplicativity of the Frobenius norm from the boundedness of $u_x$ and $\sigma$.
	\end{proof}
	
	\begin{remark}\label{rem:HLC_uxsigma}
		Define
		\begin{align*}
			\vartheta\colon[0,T]\times\mathbb{R}^d\times\mathbb{R}\to(\mathbb{R}^d)^*
			\quad
			(t,x,y)\mapsto u_x(t,x)\sigma(t,x,y)
		\end{align*}
		Similarly to Lemma~\ref{lem:v}, there exist constants $H_\vartheta,K_\vartheta\in\mathbb{R}_{>0}$ with
		\begin{align*}
			H_\vartheta\leq K_{u_x}H_\sigma+K_\sigma H_{u_x}
			\quad\text{and}\quad
			K_\vartheta\leq K_{u_x}K_\sigma
		\end{align*}
		such that for all $(t,x,y),(t',x',y')\in[0,T]\times\mathbb{R}^d\times\mathbb{R}$ it holds that
		\begin{align*}
			\lvert\vartheta(t,x,y)-\vartheta(t',x',y')\rvert
			\leq H_\vartheta\bigl(\lvert t-t'\rvert^{1/2}+\lvert x-x'\rvert+\lvert y-y'\rvert\bigr)
		\end{align*}
		and $\vartheta$ is bounded by $K_\vartheta$.
	\end{remark}
	
	\begin{theorem}
		FBSDE~\eqref{eq:FBSDE} admits a unique adapted solution $(X,Y,Z)$. Especially, the processes~$Y$ and~$Z$ take the following form: For any $t\in[0,T]$ it holds that
		\begin{align*}
			Y_t=u(t,X_t)
			\quad\text{and}\quad
			Z_t=v(t,X_t).
		\end{align*}
	\end{theorem}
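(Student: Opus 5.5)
This is the four-step-scheme argument of Ma, Protter and Yong: decouple the system with the PDE solution $u$, solve a forward SDE, and then prove existence and uniqueness separately.

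\emph{Existence.} Consider the decoupled forward SDE
\begin{align*}
	X_t=x_0+\int_0^t\tilde b(s,X_s)\,\mathrm{d}s+\int_0^t\tilde\sigma(s,X_s)\,\mathrm{d}W_s,
\end{align*}
where $\tilde b(t,x)\coloneqq b(t,x,u(t,x),v(t,x))$ and $\tilde\sigma(t,x)\coloneqq\sigma(t,x,u(t,x))$. By Assumption~\ref{ass:functions}(ii), the Lipschitz continuity of $u$ in $x$ from Assumption~\ref{ass:sol_PDE}(b), and Lemma~\ref{lem:v}, both coefficients are Lipschitz in $x$ uniformly in $t$. They are also bounded and continuous, so there is a unique strong solution $X$ with $\mathbb{E}[\sup_t\lvert X_t\rvert^2]<\infty$. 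Set $Y_t\coloneqq u(t,X_t)$ and $Z_t\coloneqq v(t,X_t)$. Since $u\in C^{1,2}$, Itô's formula gives
\begin{align*}
	\mathrm{d}Y_t=\Bigl(u_t+u_x\tilde b+\tfrac12\mathrm{tr}\bigl[u_{xx}\tilde\sigma\tilde\sigma^\top\bigr]\Bigr)(t,X_t)\,\mathrm{d}t+u_x(t,X_t)\tilde\sigma(t,X_t)\,\mathrm{d}W_t .
\end{align*}
The PDE~\eqref{eq:PDE} turns the drift into $F(t,X_t,Y_t,Z_t)$, and the diffusion term is exactly $Z_t$. Together with $Y_T=g(X_T)$ this yields the backward equation. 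The forward equation holds by construction. For the integrability conditions, $Z$ is bounded by $K_v$, and $\lvert Y_t\rvert\le\lvert u(0,x_0)\rvert+H_u(T^{1/2}+\lvert X_t-x_0\rvert)$ by the Hölder condition on $u$. Hence $(X,Y,Z)$ is a solution of the required form.

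\emph{Uniqueness.} Let $(X',Y',Z')$ be any solution. Define $\bar Y_t\coloneqq u(t,X'_t)$, $\Delta Y\coloneqq Y'-\bar Y$ and $\Delta Z\coloneqq Z'-u_x(t,X'_t)\sigma(t,X'_t,Y'_t)$. Apply Itô's formula to $u(t,X'_t)$ along the coupled dynamics of $X'$ and use the PDE evaluated at $(t,X'_t)$. Then $\Delta Y$ satisfies a BSDE with terminal value $\Delta Y_T=g(X'_T)-g(X'_T)=0$ and martingale part $\Delta Z\,\mathrm{d}W$. Its driver is a difference of the form
\begin{align*}
	&F(t,X',Y',Z')-F(t,X',\bar Y,v(t,X'))-u_x\bigl(b(t,X',Y',Z')-b(t,X',\bar Y,v(t,X'))\bigr)\\
	&\quad-\tfrac12\mathrm{tr}\bigl[u_{xx}\bigl((\sigma\sigma^\top)(t,X',Y')-(\sigma\sigma^\top)(t,X',\bar Y)\bigr)\bigr].
\end{align*}
Write $Z'-v(t,X'_t)=\Delta Z+u_x(t,X'_t)\bigl(\sigma(t,X'_t,Y'_t)-\sigma(t,X'_t,\bar Y_t)\bigr)$. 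Then boundedness of $u_x$, $u_{xx}$ and the Lipschitz bounds in Assumption~\ref{ass:functions} (including $H_{\sigma\sigma^\top}$) and Remark~\ref{rem:HLC_uxsigma} show that the driver is bounded by $C(\lvert\Delta Y\rvert+\lvert\Delta Z\rvert)$.

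The main step is the stability estimate for this linear-growth BSDE with zero terminal value. Apply Itô's formula to $e^{\beta t}\lvert\Delta Y_t\rvert^2$ and choose $\beta$ large, using Young's inequality to absorb the $\lvert\Delta Z\rvert$ term. This gives $\Delta Y\equiv0$ and $\Delta Z\equiv0$, so $Y'_t=u(t,X'_t)$ and $Z'_t=u_x(t,X'_t)\sigma(t,X'_t,u(t,X'_t))=v(t,X'_t)$. The square-integrability requirements in the definition of a solution, together with linear growth of $u$, ensure the local martingales are true martingales; localizing if needed handles this.

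Once $Y'$ and $Z'$ are identified, $X'$ solves the decoupled SDE with coefficients $\tilde b$ and $\tilde\sigma$. Pathwise uniqueness for that Lipschitz SDE gives $X'=X$ indistinguishably, and hence $(Y',Z')=(Y,Z)$. The delicate points are the exact bookkeeping of the Itô expansion, where the $u_x b$ term is cancelled via the $Z$-difference, and the martingale/integrability justification in the a priori estimate.
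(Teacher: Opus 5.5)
Your proposal is correct and follows the same route as the paper. The paper simply invokes the four-step-scheme theorem of Ma and Yong (Chapter~4, Theorem~1.1) and remarks that Lipschitz continuity of $v$ in $x$ (Lemma~\ref{lem:v}) suffices in place of a uniform Lipschitz condition on $(t,x,y,z)\mapsto z\sigma(t,x,y)$. You write that argument out in full: Lemma~\ref{lem:v} handles the decoupled SDE, and the boundedness of $u_x$ and $u_{xx}$ gives a driver bounded by $C(\lvert\Delta Y\rvert+\lvert\Delta Z\rvert)$ in the uniqueness step, which are exactly the points where the paper's weakened hypothesis enters.
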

	\begin{proof}
		Follows from \cite[Chapter~4, Theorem~1.1]{Ma_Yong_2007}. Revisiting the proof of \cite[Chapter~4, Theorem~1.1]{Ma_Yong_2007}, we observe that we do not need the uniform Lipschitz continuity in $(x,y,z)$ of the mapping $(t,x,y,z)\mapsto z\sigma(t,x,y)$ but the uniform Lip\-schitz continuity of $v$ in $x$ (see Lemma~\ref{lem:v}) is already sufficient. 
	\end{proof}
	
	\begin{remark}
		To see that $Y_t=u(t,X_t)$ and $Z_t=v(t,X_t)=u_x(t,X_t)\sigma(t,X_t,u(t,X_t))$ indeed solve the backward stochastic differential equation for $Y_t$ of FBSDE~\eqref{eq:FBSDE}, it suffices to apply Itô's formula to $u(t,X_t)$ and insert the identities for $u_t(s,X_s)$ and $u(T,X_T)$ given by the quasi-linear parabolic PDE~\eqref{eq:PDE}:
		\begin{align*}
			Y_t
			&=\begin{multlined}[t]
				u(T,X_T)-\int_t^Tu_t(s,X_s)+u_x(s,X_s)b(s,X_s,Y_s,Z_s)\\
				+\frac{1}{2}\mathrm{tr}\bigl[u_{xx}(s,X_s)(\sigma\sigma^\top)(s,X_s,Y_s)\bigr]\,\mathrm{d}s-\int_t^Tu_x(s,X_s)\sigma(s,X_s,Y_s)\,\mathrm{d}W_s.
			\end{multlined}\\
			&=\begin{multlined}[t]
				g(X_T)-\int_t^TF(s,X_s,u(s,X_s),v(s,X_s))-u_x(s,X_s)b(s,X_s,u(s,X_s),v(s,X_s))\\
				-\frac{1}{2}\mathrm{tr}\bigl[u_{xx}(s,X_s)(\sigma\sigma^\top)(s,X_s,u(s,X_s))\bigr]+u_x(s,X_s)b(s,X_s,Y_s,Z_s)\\+\frac{1}{2}\mathrm{tr}\bigl[u_{xx}(s,X_s)(\sigma\sigma^\top)(s,X_s,Y_s)\bigr]\,\mathrm{d}s-\int_t^Tu_x(s,X_s)\sigma(s,X_s,u(s,X_s))\,\mathrm{d}W_s
			\end{multlined}\\
			&=g(X_T)-\int_t^TF(s,X_s,Y_s,Z_s)\,\mathrm{d}s-\int_t^TZ_s\,\mathrm{d}W_s.
		\end{align*}	
	\end{remark}
	
	\section{Approximation via a Deep BSDE Method}\label{sec:deep_bsde}
	Let $\pi\coloneqq\{t_0,\ldots,t_N\}\subseteq[0,T]$ be a time grid with $0\eqqcolon t_0<\ldots<t_N\coloneqq T$. Define the function $\Pi\colon [0,T]\to\pi$ by $\Pi(t)\coloneqq t_i$ for $t\in[t_i,t_{i+1})$, $i=0,\ldots,N-1$, and $\Pi(T)\coloneqq t_N=T$. Assume for a moment that we already know the initial condition $Y_0=u(0,x_0)$ of the process~$Y$ and the functional relationship $Z_t=v(t,X_t)$ while the functional relationship $Y_t=u(t,X_t)$ is still unknown. In this situation, one possible strategy to approximate the processes~$X$ and~$Y$ would be to reformulate FBSDE~\eqref{eq:FBSDE} as a system of forward SDEs only involving the processes~$X$ and~$Y$ and to apply the Euler-Maruyama method. Back in reality, where the necessary information is not given, we are only able to mimic the proposed strategy by instead searching for a tuple $\eta\coloneqq(y_0,\rho)$ of an initial value $y_0\in\mathbb{R}$ and a candidate function $\rho\colon[0,T]\times\mathbb{R}^d\to(\mathbb{R}^d)^*$ such that the initial condition $Y_0\approx y_0$ and the functional relationship $Z_t\approx\rho(t,X_t)$ hold approximately. Then, on $[0,T]$, we approximate the adapted solution $(X,Y,Z)$ by the triple $(X^{\eta,\pi},Y^{\eta,\pi},Z^{\eta,\pi})\coloneqq(X^{\eta,\pi}_t,Y^{\eta,\pi}_t,Z^{\eta,\pi}_t)_{t\in[0,T]}$, which is determined via
	\begin{align}\label{eq:Euler}
		\left\{\begin{aligned} 
			X^{\eta,\pi}_{t_0}&\coloneqq x_0,\;Y^{\eta,\pi}_{t_0}\coloneqq y_0,\\
			Z^{\eta,\pi}_{t_i}&\coloneqq\rho(t_i,X^{\eta,\pi}_{t_i}),\quad\forall i\in\{0,\ldots,N\},\\			
			X^{\eta,\pi}_{t_{i+1}}&\coloneqq X^{\eta,\pi}_{t_i}+b(t_i,X^{\eta,\pi}_{t_i},Y^{\eta,\pi}_{t_i},Z^{\eta,\pi}_{t_i})\Delta_i+\sigma(t_i,X^{\eta,\pi}_{t_i},Y^{\eta,\pi}_{t_i})\Delta W_i,\quad\forall i\in\{0,\ldots,N-1\},\\
			Y^{\eta,\pi}_{t_{i+1}}&\coloneqq Y^{\eta,\pi}_{t_i}+F(t_i,X^{\eta,\pi}_{t_i},Y^{\eta,\pi}_{t_i},Z^{\eta,\pi}_{t_i})\Delta_i+Z^{\eta,\pi}_{t_i}\Delta W_i,\quad\forall i\in\{0,\ldots,N-1\},\\
			X^{\eta,\pi}_t&\coloneqq X^{\eta,\pi}_{\Pi(t)},\;Y^{\eta,\pi}_t\coloneqq Y^{\eta,\pi}_{\Pi(t)},\;Z^{\eta,\pi}_t\coloneqq Z^{\eta,\pi}_{\Pi(t)},\quad\forall t\in[0,T]\setminus\pi,
		\end{aligned}\hspace{-0.29cm}\right.
	\end{align}
	where, for $i\in\{0,\ldots,N-1\}$, $\Delta_i\coloneqq t_{i+1}-t_i\in\mathbb{R}_{>0}$ and $\Delta W_i\coloneqq W_{t_{i+1}}-W_{t_i}\in\mathbb{R}^{d}$.
	
	To evaluate the quality of the approximating solution triple $(X^{\eta,\pi},Y^{\eta,\pi},Z^{\eta,\pi})$, i.\,e., to evaluate how close it is to the true adapted solution $(X,Y,Z)$ on $[0,T]$, we are interested in the following $L^2$-approximation error:
	\begin{align*}
		\mathcal{E}_{\eta,\pi}\coloneqq\sup_{t\in[0,T]}\mathbb{E}\bigl[\lvert X_t-X^{\eta,\pi}_t\rvert^2\bigr]+\max_{t_i\in\pi}\mathbb{E}\bigl[\lvert Y_{t_i}-Y^{\eta,\pi}_{t_i}\rvert^2\bigr]+\mathbb{E}\biggl[\int_0^T\lvert Y_t-Y^{\eta,\pi}_t\rvert^2+\lvert Z_t-Z^{\eta,\pi}_t\rvert^2\,\mathrm{d}t\biggr].
	\end{align*}
	
	We consider the following function class:
	\begin{align*}
		\Theta_{K_\rho,H_\rho}\coloneqq\left\{\rho\colon[0,T]\times\mathbb{R}^d\to(\mathbb{R}^d)^*\;\left\vert\;
			\begin{aligned}
				&\rho\text{ is bounded by }K_\rho\in\mathbb{R}_{>0}\text{, }\rho\text{ is }\tfrac{1}{2}\text{-Hölder continuous}\\
				&\text{in the first variable and Lipschitz continuous in the}\\
				&\text{second variable with common constant }H_\rho\in\mathbb{R}_{>0}\text{,}\\
				&\text{i.\,e., for all }(t,x),(t',x')\in[0,T]\times\mathbb{R}^d\text{ it holds that}\\
				&\quad\lvert\rho(t,x)-\rho(t',x')\rvert\leq H_\rho\bigl(\lvert t-t'\rvert^{1/2}+\lvert x-x'\rvert\bigr).
			\end{aligned}
			\right.\right\}.
	\end{align*}
	It turns out that, if choosing $\rho\in\Theta_{K_\rho,H_\rho}$, the $L^2$-approximation error $\mathcal{E}_{\eta,\pi}$ can be bounded solely considering the expected squared difference $\mathbb{E}\bigl[\lvert Y^{\eta,\pi}_{T_N}-g(X^{\eta,\pi}_{T_N})\rvert^2\bigr]$, which measures how good the approximating processes $X^{\eta,\pi}_{T_N}$ and $Y^{\eta,\pi}_{T_N}$ fulfill the terminal condition, and the mesh size $\lvert\pi\rvert\coloneqq\max_{i=0,\ldots,N-1}\Delta_i$.
	 
	\begin{theorem}\label{theorem:main}
		Let $K_\rho,H_\rho\in\mathbb{R}_{>0}$. There exists some constant $C\in[1,\infty)$ such that, for every $\rho\in\Theta_{K_\rho,H_\rho}$, $y_0\in\mathbb{R}$ and $\lvert\pi\rvert\leq(C(1+K_\rho^4))^{-1}$, it holds that
		\begin{align*}
			\mathcal{E}_{\eta,\pi}\leq C\Bigl(\mathrm{e}^{CK_\rho^4}\mathbb{E}\bigl[\lvert Y^{\eta,\pi}_T-g(X^{\eta,\pi}_T)\rvert^2\bigr]+\mathrm{e}^{C(K_\rho^2+H_\rho^2)}(\lvert\pi\rvert+\lvert\pi\rvert^\alpha)\Bigr).
		\end{align*}
	\end{theorem}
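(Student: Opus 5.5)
The approach follows the strategy announced in the introduction. We introduce an auxiliary decoupled FBSDE, apply a Bender--Steiner type a-posteriori estimate to it, and then control the distance between the auxiliary and the true solution through the PDE solution $u$. The key observation is that the scheme \eqref{eq:Euler} is a forward Euler scheme for the pair $(X^{\eta,\pi},Y^{\eta,\pi})$, with $Z^{\eta,\pi}=\rho(\cdot,X^{\eta,\pi})$ prescribed. Hence $X^{\eta,\pi}$ depends on $Y^{\eta,\pi}$, and the coupling is therefore of the type ``forward system in $(X,Y)$''.

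\textbf{Step 1: a continuous-time comparison via $u$.} Rather than comparing $X$ and $X^{\eta,\pi}$ directly, we compare $Y^{\eta,\pi}_{t_i}$ with $u(t_i,X^{\eta,\pi}_{t_i})$. Define $D_i\coloneqq Y^{\eta,\pi}_{t_i}-u(t_i,X^{\eta,\pi}_{t_i})$ and $E_i\coloneqq Z^{\eta,\pi}_{t_i}-\vartheta(t_i,X^{\eta,\pi}_{t_i},Y^{\eta,\pi}_{t_i})$. A second-order Taylor expansion of $u$ around $(t_i,X^{\eta,\pi}_{t_i})$ uses the PDE \eqref{eq:PDE}, the bound on $u_{xx}$, its Hölder continuity (this produces the $\lvert\pi\rvert^\alpha$ term), and Remark~\ref{rem:HLC_uxsigma}. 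Writing $X,Y,Z$ for the Euler quantities at $t_i$, it gives
\[
D_{i+1}=D_i+\bigl[F(\cdot,X,Y,Z)-F(\cdot,X,u,v)+u_x\bigl(b(\cdot,X,u,v)-b(\cdot,X,Y,Z)\bigr)+\tfrac12\mathrm{tr}\bigl[u_{xx}\bigl(\sigma\sigma^\top(\cdot,X,u)-\sigma\sigma^\top(\cdot,X,Y)\bigr)\bigr]\bigr]\Delta_i+E_i\Delta W_i+R_i,
\]
with a remainder satisfying $\mathbb{E}\lvert R_i\rvert^2\lesssim \Delta_i^{2}(\lvert\pi\rvert+\lvert\pi\rvert^\alpha)$. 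The size of $R_i$ is governed by the moments of $X_{t_{i+1}}-X_{t_i}$, which are bounded by the uniform bounds on $b$ and $\sigma$. The drift bracket is Lipschitz in $(D_i,\,Z-v)$. Moreover $\lvert Z-v\rvert\le \lvert E_i\rvert+H_\vartheta\lvert D_i\rvert$, since $v(t,x)=\vartheta(t,x,u(t,x))$. The drift term is thus bounded by $C(\lvert D_i\rvert+\lvert E_i\rvert)$.

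\textbf{Step 2: the a-posteriori (backward) estimate.} Read the recursion backwards from $D_N=Y^{\eta,\pi}_T-g(X^{\eta,\pi}_T)$, which is exactly the terminal loss. This is a discrete BSDE whose martingale part is $E_i\Delta W_i$, and its driver is Lipschitz in $(D,E)$. The standard discrete BSDE stability argument (square, take conditional expectations, use Young's inequality to absorb $\lvert E_i\rvert^2\Delta_i$, then apply discrete Gronwall backwards) then gives
\[
\max_i\mathbb{E}\lvert D_i\rvert^2+\sum_i\mathbb{E}\lvert E_i\rvert^2\Delta_i\le C\bigl(\mathbb{E}\lvert D_N\rvert^2+\lvert\pi\rvert+\lvert\pi\rvert^\alpha\bigr).
\]
The factors $\mathrm e^{CK_\rho^4}$ and the step restriction $\lvert\pi\rvert\le (C(1+K_\rho^4))^{-1}$ enter here, for two reasons. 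The martingale part is not orthogonal to the remainder, and $Z^{\eta,\pi}$ is not the conditional projection but is forced to be $\rho$. The absorption $\lvert E_i\rvert^2\Delta_i\lesssim$ terms of size $(K_\rho+K_\vartheta)^2$ then requires a small step relative to powers of $K_\rho$. This step is essentially \cite{Bender_Steiner_2013} applied to the auxiliary decoupled system in which $X^{\eta,\pi}$ is regarded as a given forward process.

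\textbf{Step 3: forward error for $X$, and assembly.} Compare $X^{\eta,\pi}$ with $X$ by standard Euler-error arguments. The coefficients of $X$ are evaluated at $(u(t,X_t),v(t,X_t))$, whereas those of $X^{\eta,\pi}$ are evaluated at $(u,v)(t_i,X^{\eta,\pi}_{t_i})+(D_i,\;Z-v)$. The decoupled SDE $\mathrm dX=b(\cdot,X,u,v)\,\mathrm dt+\sigma(\cdot,X,u)\,\mathrm dW$ has coefficients that are Lipschitz in $x$ by Lemma~\ref{lem:v}. Its Euler error is therefore $O(\lvert\pi\rvert)$, plus perturbations that cost $C\sum_i\mathbb{E}(\lvert D_i\rvert^2+\lvert E_i\rvert^2)\Delta_i$. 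A Gronwall argument, whose constants depend on $H_\rho$ and $K_\rho$ through the Hölder regularity of $Z^{\eta,\pi}$ between grid points (giving $\mathrm e^{C(K_\rho^2+H_\rho^2)}$), bounds $\sup_t\mathbb{E}\lvert X_t-X^{\eta,\pi}_t\rvert^2$. The $Y$ and $Z$ errors then follow from
\[
Y_{t_i}-Y^{\eta,\pi}_{t_i}=u(t_i,X_{t_i})-u(t_i,X^{\eta,\pi}_{t_i})-D_i
\]
and from $Z_t-Z^{\eta,\pi}_t=v(t,X_t)-v(t_i,X^{\eta,\pi}_{t_i})-(Z-v)$, using Lipschitz and Hölder bounds together with the path regularity of $X$ on intervals.

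The main obstacle is Step~2. The key difficulty is to handle the fact that $Z^{\eta,\pi}$ is prescribed through $\rho$, rather than obtained as a conditional expectation, while the driver depends on $E_i$ through the $b$-coupling weighted by $u_x$. I would first try squaring the recursion for $D_i$ and estimating the cross term $2\,\mathbb{E}[D_i\cdot \text{drift}]\Delta_i$ by $\varepsilon\,\mathbb{E}\lvert E_i\rvert^2\Delta_i$. The term $\mathbb{E}\lvert E_i\rvert^2\Delta_i$ from the martingale part of the squared increment would then absorb it.
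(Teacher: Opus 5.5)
There is a gap in Step~1, and your absorption argument in Step~2 depends on it. After you move $\frac12\mathrm{tr}[u_{xx}\sigma\sigma^\top]\Delta_i$ into the drift, $R_i$ still contains the centred second-order term $M_i\coloneqq\frac12\bigl(\Delta W_i^\top A_i\Delta W_i-\mathrm{tr}(A_i)\Delta_i\bigr)$, where $A_i=\sigma^\top u_{xx}\sigma$ is evaluated at $(t_i,X^{\eta,\pi}_{t_i},Y^{\eta,\pi}_{t_i})$.

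\emph{The remainder is too large.} For Gaussian increments $\mathbb{E}\lvert M_i\rvert^2=\frac12\Delta_i^2\,\mathbb{E}[\mathrm{tr}(A_i^2)]$. This is of exact order $\Delta_i^2$ unless $u_{xx}$ vanishes, not $\Delta_i^2(\lvert\pi\rvert+\lvert\pi\rvert^\alpha)$. Suppose you bound cross terms by Young's inequality as proposed, i.e.\ by $\varepsilon\Delta_i\mathbb{E}\lvert\cdot\rvert^2+C\Delta_i^{-1}\mathbb{E}\lvert R_i\rvert^2$. Then the accumulated error $\sum_i\Delta_i^{-1}\mathbb{E}\lvert M_i\rvert^2$ is of order $T$ and does not vanish as $\lvert\pi\rvert\to0$.

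\emph{The missing observation.} $M_i$ satisfies $\mathbb{E}[M_i\mid\mathcal{F}_{t_i}]=0$ and, since odd Gaussian moments vanish, $\mathbb{E}[M_i\Delta W_i\mid\mathcal{F}_{t_i}]=0$. Hence $M_i$ is orthogonal to $D_i+f_i\Delta_i+E_i\Delta W_i$ and only adds $+\mathbb{E}\lvert M_i\rvert^2\geq0$ to $\mathbb{E}\lvert D_{i+1}\rvert^2$. It is the orthogonal martingale increment of your discrete BSDE and can be dropped. Contrary to your remark, the leading part of the remainder \emph{is} orthogonal to the martingale part, and that is what makes the argument work. Only the rest $\tilde R_i$ satisfies $\mathbb{E}\lvert\tilde R_i\rvert^2\lesssim\Delta_i^{2+\alpha}$, and only that part can go through Young/Gronwall. $\tilde R_i$ consists of:
the time remainder, controlled by the Hölder continuity of $u_t$ read off from \eqref{eq:PDE};
the third-order spatial remainder, controlled by the Hölder continuity of $u_{xx}$;
the cross terms involving $b$.

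With this repair your route is genuinely different from the paper's, and stronger in one respect. The paper never expands $u$ along the Euler chain. It does the following:
\begin{itemize}
\item[(i)] it builds the continuous-time decoupled FBSDE \eqref{eq:FBSDE-rho};
\item[(ii)] it designs the truncated generator $F^\rho$ so that Itô's formula gives $Y^\eta=u(t,X^\eta_t)$ and $Z^\eta=\vartheta(t,X^\eta_t,\tilde Y^\eta_t)$, while the deep BSDE recursion is exactly the Euler scheme of this system;
\item[(iii)] it combines Bender--Steiner (Theorem~\ref{theorem:estimate_rho_approx}), strong Euler convergence for $\tilde X^\eta$ (Lemma~\ref{lem:Euler_Scheme}) and a Gronwall comparison of $X$ with $X^\eta$ (Lemma~\ref{lem:estimate_SDE}).
\end{itemize}

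The factors $\mathrm{e}^{CK_\rho^4}$, $\mathrm{e}^{C(K_\rho^2+H_\rho^2)}$ and the step-size restriction are artifacts of that construction. The truncation at level $K_\rho\vee K_\vartheta$ makes the Lipschitz constant $H_1$ of $F^\rho$ grow like $K_\rho$, and the Bender--Steiner constants involve $\Gamma\sim H_1^4$. $H_\rho$ enters through the Euler constant of the auxiliary SDE, whose diffusion contains $\rho$.

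Your explanations of these factors in Steps~2 and~3 do not match your own argument:
\begin{itemize}
\item[(i)] you have $\lvert f_i\rvert\le L(\lvert D_i\rvert+\lvert E_i\rvert)$, with $L$ depending only on $H_F,H_b,K_{u_x},K_{u_{xx}},H_{\sigma\sigma^\top},H_\vartheta$;
\item[(ii)] the Taylor remainders involve only $K_b$, $K_\sigma$ and the regularity of $u$;
\item[(iii)] Step~3 compares with the Lipschitz SDE $\mathrm{d}X=b(t,X,u,v)\,\mathrm{d}t+\sigma(t,X,u)\,\mathrm{d}W$ and controls $Z_t-Z^{\eta,\pi}_t$ through $v$, $D_i$ and $E_i$, so the regularity of $\rho$ between grid points never enters.
\end{itemize}
Correctly executed, your route gives the bound with constants and step-size restriction independent of $K_\rho$ and $H_\rho$. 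That implies the stated theorem. The sentences attributing these factors to your Steps~2 and~3 should therefore be deleted rather than justified.
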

	
	This theorem shows that, on $[0,T]$, the adapted solution $(X,Y,Z)$ of FBSDE~\eqref{eq:FBSDE} can be approximated arbitrarily well by $(X^{\eta,\pi},Y^{\eta,\pi},Z^{\eta,\pi})$, provided that the candidate function $\rho$ and the initial condition $y_0$ are chosen such that the terminal condition is satisfied in an $L^2$-optimal sense, i.\,e., $\mathbb{E}\bigl[\lvert Y^{\eta,\pi}_T-g(X^{\eta,\pi}_T)\rvert^2\bigr]$ is minimized over all $\eta\in\mathbb{R}\times\Theta_{K_\rho,H_\rho}$.  Here, in view of Lemma~\ref{lem:v}, it is natural to choose $K_\rho\geq K_v$ and $H_\rho\geq H_v$, while avoiding unnecessarily large values of these constants. Otherwise, a good approximation may not be found, as the search is restricted to an inappropriate class of functions.

	\begin{remark}
		\label{rem:weak_coupling}
	\citeauthor{Han_Long_2020}~\cite{Han_Long_2020} were the first to prove a-posteriori bounds of this type for coupled FBDEs. In their setting, the drift coefficient $b$ does not depend on $Z$ (i.\,e., no $Z$-coupling) and one of the following generic assumptions must be satisfied: (i) small time horizon ($T$ small), (ii) weak coupling of $Y$ into the forward SDE (Lipschitz constants of $b$ and $\sigma$ in the $y$-variable small), (iii) weak coupling of $X$ into the backward SDE (Lipschitz constants of $F$ and $g$ in the $x$-variable small), (iv) $(-F)$ strongly decreasing in the $y$-variable, (v) $b$ strongly decreasing in the $x$-variable; see  \cite[Theorem 1']{Han_Long_2020} for a precise statement. The results of \cite{Han_Long_2020} have been generalized by \citeauthor{Negyesi_Huang_Oosterlee_2026}~\cite{Negyesi_Huang_Oosterlee_2026} to the case that $Z$ couples into the forward SDE through the drift coefficient $b$. They additionally require a weak coupling condition of $Z$ (Lipschitz constant of $b$ in the $z$-variable small) in each of the cases (ii)--(v). We note that, by means of \cite[Theorem~3.3]{Pardoux_Tang_1999}, the weak coupling condition of $Z$ is not required for the well-posedness of the FBSDE in cases (iv)--(v).
	\end{remark}

	\begin{remark}
		\begin{itemize}
			\item[(a)]
				Since we assumed $g$ to be Lipschitz continuous with constant $H_g$, as long as FBSDE~\eqref{eq:FBSDE} is solvable, the following bound applies:
				\begin{align*}
					\mathbb{E}\bigl[\lvert Y^{\eta,\pi}_T-g(X^{\eta,\pi}_T)\rvert^2\bigr]
					&=\mathbb{E}\bigl[\lvert Y^{\eta,\pi}_T-Y_T+g(X_T)-g(X^{\eta,\pi}_T)\rvert^2\bigr]\\
					&\leq2\Bigl(\mathbb{E}\bigl[\lvert Y^{\eta,\pi}_T-Y_T\rvert^2\bigr]+H_g^2\mathbb{E}\bigl[\lvert X_T-X^{\eta,\pi}_T\rvert^2\bigr]\Bigr)\\
					&\leq2(1\vee H_g^2)\mathcal{E}_{\eta,\pi}.\qedhere
				\end{align*}
				Hence, a small terminal loss $\mathcal{E}_{\eta,\pi}$ also is necessary for a good $L^2$-approximation of the true solution.
			\item[(b)]
				In the case all constants from Assumptions~\ref{ass:functions} and~\ref{ass:sol_PDE} are explicitly known together with $K_\rho$ and $H_\rho$, Equation~\eqref{eq:main_concrete} in the proof of Theorem~\ref{theorem:main} gives an explicit bound for $\mathcal{E}_{\eta,\pi}$ in terms of $\mathbb{E}\bigl[\lvert Y^{\eta,\pi}_T-g(X^{\eta,\pi}_T)\rvert^2\bigr]$, $\lvert\pi\rvert$ and $\lvert\pi\rvert^\alpha$ whenever $\lvert\pi\rvert$ fulfills the condition of Theorem~\ref{theorem:estimate_rho_approx}.
		\end{itemize}
	\end{remark}
	
	Based on Theorem~\ref{theorem:main}, we propose a deep BSDE method to approximate the solution of \mbox{FBSDE}~\eqref{eq:FBSDE}. The solution is approximated by the Euler-Maruyama scheme~\eqref{eq:Euler} on the equidistant time grid $\hat\pi_N\coloneqq\{t_0,\ldots,t_N\}\subseteq[0,T]$ defined by $t_i\coloneqq\frac{i}{N}T$, $i=0,\ldots,N$, where the tuple $\eta=(y_0,\rho)$ of the initial value~$y_0$ and the control function~$\rho$ is learned jointly by a neural network architecture $\hat\eta\coloneqq(\hat y_0,\hat\rho)$ such that the terminal mismatch is minimized. More precisely, the initial value~$y_0$ is treated as trainable parameter~$\hat{y}_0\in\mathbb{R}$, while the function~$\rho$ is parameterized by a (residual) neural subnet $\hat\rho\colon\mathbb{R}\times\mathbb{R}^d\to(\mathbb{R}^d)^*$. Let
	\begin{align*}
		\hat\Gamma\coloneqq\mathbb{R}^{w\times w}\times\mathbb{R}^{w\times w}\times\mathbb{R}^{w\times(1+d)}\times\mathbb{R}^{d\times w}\times\mathbb{R}^w\times\mathbb{R}^w\times\mathbb{R}^w\times\mathbb{R}^d
	\end{align*}
	be the possible parameter space of $\hat\rho$ and
	\begin{align*}
		\hat\gamma\coloneqq(W^r_1,W^r_2,W^i,W^o,b^r_1,b^r_2,b^i,b^o)\in\hat\Gamma
	\end{align*}
	a concrete parameter combination.
	Given fixed values $w\in\mathbb{N}$ and $K_{\hat\rho}\in\mathbb{R}_{>0}$, the neural subnet~$\hat\rho$ is defined as
	\begin{align}\label{eq:rho_hat}
		\hat\rho\colon\mathbb{R}\times\mathbb{R}^d\to\mathbb{R}^d,\quad
		(t,x)\mapsto h_{K_{\hat\rho}}\left(W^or\left(\sigma\left(W^i\begin{pmatrix}t\\x\end{pmatrix}+b^i\right)\right)+b^o\right),
	\end{align}
	with residual block
	\begin{align*}
		r&\colon\mathbb{R}^w\to\mathbb{R}^w,\quad
		s\mapsto s+W^r_2\sigma(W^r_1\phi(s)+b^r_1)+b^r_2,
	\end{align*}
	hyperbolic tangent activation function
	\begin{align*}
		\sigma\colon\mathbb{R}^w\to\mathbb{R}^w,\;
		(s_i)_{i=1,\ldots,w}\mapsto(\tanh(s_i))_{i=1,\ldots,w},
	\end{align*}
	layer normalization
	\begin{align*}
 		\phi\colon\mathbb{R}^w\to\mathbb{R}^w,\;
		(s_i)_{i=1,\ldots,w}\mapsto\left(\frac{s_i-\frac{1}{w}\sum_{j=1}^ws_j}{\sqrt{\frac{1}{w}\sum_{j=1}^w(s_j-\frac{1}{w}\sum_{k=1}^ws_k)^2+0.001}}\right)_{i=1,\ldots,w},
	\end{align*}
	and final projection
	\begin{align*}
		h_{K_{\hat\rho}}:\mathbb{R}^d\to\mathbb{R}^d,\quad
		s\mapsto
		\begin{cases}
			s & \text{if }\lvert s\rvert\leq K_{\hat\rho}\\
			K_{\hat\rho}\frac{s}{\lvert s\rvert} & \text{if }\lvert s\rvert>K_{\hat\rho}.
		\end{cases}
	\end{align*}
	onto the closed ball with radius $K_{\hat\rho}$. Thus, $\hat\rho$ is chosen from the space of functions
	\begin{align*}
		\hat\Theta_{K_{\hat\rho}}\coloneqq\bigl\{\hat\rho\colon\mathbb{R}\times\mathbb{R}^d\to\mathbb{R}^d\mid\hat\rho\text{ is of the form~\eqref{eq:rho_hat} with }\hat\gamma\in\hat\Gamma\bigr\}.
	\end{align*}
	
	Since all network layers of the above defined neural subnets are Lipschitz continuous and the projection operator preserves Lipschitz continuity, any $\hat\rho\in\hat\Theta_{K_{\hat\rho}}$ is automatically Lipschitz and consequently, since we ensure the boundedness by the final projection $h_{K_{\hat\rho}}$, also satisfies the required Hölder and Lipschitz regularity condition. In particular:
	\begin{align*}
		\forall\hat\rho\in\hat\Theta_{K_{\hat\rho}}\colon\exists H_{\hat\rho}\in\mathbb{R}_{>0}\colon\hat\rho\in\Theta_{K_{\hat\rho},H_{\hat\rho}},
	\end{align*}
	i.\,e., any candidate control $\hat\rho\in\hat\Theta_{K_{\hat\rho}}$ fulfills the assumption of Theorem~\ref{theorem:main}.
	
	Theoretically, in view of Theorem~\ref{theorem:main}, we want to find a joint network $\hat\eta\coloneqq(\hat y_0,\hat\rho)$ with parameters $(\hat y_0,\hat\gamma)$ which solves the following minimization problem:
	\begin{align}\label{eq:minimization_problem}
		\min_{\hat\eta\in\mathbb{R}\times\hat\Theta_{K_{\hat\rho}}}\mathbb{E}\bigl[\lvert Y^{\hat\eta,\hat\pi_N}_T-g(X^{\hat\eta,\hat\pi_N}_T)\rvert^2\bigr].
	\end{align}
	In practice, we approximate this expectation by simulation: Given a set of $K\in\mathbb{N}$ realizations $\{\mathbf{\Delta}w^k\coloneqq(\Delta w^k_0,\ldots,\Delta w^k_{N-1})\}_{k=1,\ldots,K}$ of the Brownian increments $(\Delta W_0,\ldots,\Delta W_{N-1})$ under $\hat\pi_N$ and taking into account the corresponding $K$ trajectories $\{(x^{\hat\eta,\hat\pi_N,k},y^{\hat\eta,\hat\pi_N,k},z^{\hat\eta,\hat\pi_N,k})\}_{k=1,\ldots,K}$ of the process $(X^{\hat\eta,\hat\pi_N},Y^{\hat\eta,\hat\pi_N},Z^{\hat\eta,\hat\pi_N})$, we define the loss function
	\begin{align*}
		\ell(\hat y_0,\hat\gamma;\{\mathbf{\Delta}w^k\}_{k=1,\ldots,K})\coloneqq\frac{1}{K}\sum_{k=1}^K\bigl\lvert y^{\hat\eta,\hat\pi_N,k}_{t_N}-g(x^{\hat\eta,\hat\pi_N,k}_{t_N})\bigr\rvert^2.
	\end{align*}
	
	Then the training of the neural net $\hat\eta$ works as follows. Iteratively, starting from an initial neural net $\hat\eta^0\coloneqq(\hat y_0^0,\hat\rho^0)$ with parameters $(\hat y_0^0,\hat\gamma^0)\in\mathbb{R}\times\hat\Gamma$, the parameters are updated according to the standard Adam optimizer (with $\beta_1=0.9$, $\beta_2=0.999$, $\varepsilon=10^{-7}$, $m_0=v_0=0$) \cite{Kingma_Ba_2017}. In particular, for each $l=1,\ldots,L$ (where $L\in\mathbb{N}$), we first simulate $K$ realizations $\{\mathbf{\Delta}w^{l,k}\coloneqq(\Delta w^{l,k}_0,\ldots,\Delta w^{l,k}_{N-1})\}_{k=1,\ldots,K}$ of the Brownian increments $(\Delta W_0,\ldots,\Delta W_{N-1})$. Given the neural net $\hat\eta^{l-1}\coloneqq(\hat y_0^{l-1},\hat\rho^{l-1})$ with parameters $(\hat y_0^{l-1},\hat\gamma^{l-1})$, we then compute the corresponding $K$ trajectories $\{(x^{\hat\eta^{l-1},\hat\pi_N,k},Y^{\hat\eta^{l-1},\hat\pi_N,k},Z^{\hat\eta^{l-1},\hat\pi_N,k})\}_{k=1,\ldots,K}$ of the process $(X^{\hat\eta^{l-1},\hat\pi_N},Y^{\hat\eta^{l-1},\hat\pi_N},Z^{\hat\eta^{l-1},\hat\pi_N})$. Finally, we update the parameters according to
	\begin{align*}
		(\hat y_0^l,\hat\gamma^l)\coloneqq(\hat y_0^{l-1},\hat\gamma^{l-1})-\lambda(l)\frac{\hat m_l}{\sqrt{\hat v_l}+\varepsilon},
	\end{align*}
	where
	\begin{align*}
		\hat m_l\coloneqq\frac{m_l}{1-\beta_1^l}
		\quad\text{and}\quad
		\hat v_l\coloneqq\frac{v_l}{1-\beta_2^l},
	\end{align*}
	with
	\begin{align*}
		m_l&\coloneqq\beta_1m_{l-1}+(1-\beta_1)G_l,\\
		v_l&\coloneqq\beta_2v_{l-1}+(1-\beta_2)G_l^2,\\
		G_l&\coloneqq\nabla_{(\hat y_0,\hat\gamma)}\ell(\hat y_0^{l-1},\hat\gamma^{l-1};\{\mathbf{\Delta}w^{l,k}\}_{k=1,\ldots,K}),
	\end{align*}
	and, for initial learning rate $\lambda_0\in\mathbb{R}_{>0}$, decay rate $k_\lambda\in\mathbb{R}_{>0}$ and decay steps $n_\lambda$,
	\begin{align*}
		\lambda\colon\{1,\ldots,L\}\to\mathbb{R}_{>0},\;l\mapsto\lambda_0 k_\lambda^{l/n_\lambda}
	\end{align*}
	is a learning rate schedule. We denote the minimizing neural net of minimization problem~\eqref{eq:minimization_problem} with
	\begin{align*}
		\hat\eta^*\coloneqq(\hat y_0^*,\hat\rho^*)\coloneqq(\hat y_0^L,\hat\rho^L),
	\end{align*}
	which has parameters $(\hat y_0^*,\hat\gamma^*)\coloneqq(\hat y_0^L,\hat\gamma^L)$ and depends on $w$, $K_{\hat\rho}$, $N$, $K$, $L$, $\hat y_0^0$, $\hat\gamma^0$, $\lambda_0$, $k_\lambda$, $n_\lambda$ and $\{\mathbf{\Delta}w^{l,k}\}_{l=1,\ldots,L;\,k=1,\ldots,K}$.
	
	\begin{remark}
		\begin{itemize}
			\item[(a)]
				While the described algorithm controls the bound~$K_{\hat\rho^*}$ of the optimizer $\hat\rho^*$, it does not explicitly enforce any bound on its Lipschitz constant~$H_{\hat\rho^*}$. Consequently, a sequence $(H_{\hat\rho^*_N})$ of Lipschitz constants corresponding to a sequence of learned minimizers $(\hat\rho^*_N)$ may diverge as $N\to\infty$. Depending on the growth rate of the sequence $(H_{\hat\rho^*_N})$, this may affect convergence as the time discretization is refined. Nevertheless, such behavior was not observed in our numerical experiments.
			\item[(b)]
				Simultaneous control of both the bound $K_{\hat\rho^*_N}$ and Lipschitz constant $H_{\hat\rho^*_N}$ of a minimizer~$\hat\rho^*_N$ would in particular be beneficial in situations where the constants $K_v$ and $H_v$ are unknown. In this case, in order for the neural network to approximate plausible candidate controls, the admissible bounds $K_{\hat\rho^*_N}$ and $H_{\hat\rho^*_N}$ eventually need to exceed $K_v$ and $H_v$. However, Theorem~\ref{theorem:main} shows that these constants cannot be allowed to grow arbitrarily fast as $N\to\infty$. Hence, to guarantee convergence of the deep BSDE method, their growth must be controlled appropriately. One possibility to additionally control the Lipschitz constant of~$\hat\rho^*_N$ is, noticing that~$\hat\rho$ is even differentiable, to penalize $\sup_{(t,x)\in[0,T]\times\mathbb{R}^d}\lvert J_{\hat\rho}(t,x)\rvert_2$ exceeding~$H_\rho$, i.\,e., for some $c\in\mathbb{R}_{\geq0}$, solving the minimization problem
				\begin{align*}
					\min_{\hat\eta\in\mathbb{R}\times\hat\Theta_{K_{\hat\rho}}}\mathbb{E}\bigl[\lvert Y^{\hat\eta,\hat\pi_N}_T-g(X^{\hat\eta,\hat\pi_N}_T)\rvert^2\bigr]+c\left(\sup_{(t,x)\in[0,T]\times\mathbb{R}^d}\lvert J_{\hat\rho}(t,x)\rvert_2-H_\rho\right)^+,
				\end{align*}
				where $J_{\hat\rho}(t,x)$ denotes the Jacobian of $\hat\rho$ evaluated at $(t,x)$, $\lvert\cdot\rvert_2$ denotes the spectral norm and $(\cdot)^+\coloneqq\max\{\cdot,0\}$.
				In the learning process of the neural net $(\hat y_0,\hat\rho)$, we could adjust the loss function accordingly as
				\begin{align*}
					&\ell(\hat y_0,\hat\gamma;\{\mathbf{\Delta}w^k\}_{k=1,\ldots,K})\\
					&\quad\coloneqq\frac{1}{K}\sum_{k=1}^K\bigl\lvert y^{\hat\eta,\hat\pi_N,k}_{t_N}-g(x^{\hat\eta,\hat\pi_N,k}_{t_N})\bigr\rvert^2+\frac{c}{\gamma}\log\left(\sum_{k=1}^K\sum_{i=0}^N\mathrm{e}^{\gamma\bigl(\lvert J_{\hat\rho}(t_i,\hat X^{l,k}_{t_i})\rvert_2-H_\rho\bigr)^+}\right),
				\end{align*}
				where $\gamma\in[1,\infty)$. This approximates the maximal violation of the Hölder constant.
			\item[(c)]
				While in our approach~$\rho$ is approximated by a single neural network~$\hat\rho$ which also takes~$t$ as an input, the original deep BSDE approach uses, for each $i=0,\ldots,N$, a separate neural network~$\hat\rho_i$ to approximate $\rho(t_i,\cdot)$. However, the use of separate neural networks does not directly guarantee that the resulting approximation~$\hat\rho$ satisfies the Hölder and Lipschitz conditions required in our framework. Moreover, deep BSDE methods in the literature typically rely on standard feed-forward neural networks rather than residual neural networks.
		\end{itemize}
	\end{remark}
	
	\section{Numerical Experiments}\label{sec:numerical_experiments}
	\begin{example}\label{ex}
		The following example is a modified version of \cite[Example~1]{Negyesi_Huang_Oosterlee_2026} such that~$b$ and~$\sigma$ fulfill the necessary boundedness conditions of Assumption~\ref{ass:functions}. Let $\mathbf{1}\coloneqq(1,\ldots,1)^\top\in\mathbb{R}^d$, $I$ denote the identity matrix of dimension $d\times d$ and
		\begin{align*}
			u(t,x)&=\mathrm{e}^{-r(T-t)}\sum_{i=1}^d\sin(x_i)+d+1.
		\end{align*}
		For $r,\bar{\sigma},\kappa_y,\kappa_z\in\mathbb{R}_{>0}$, the coefficients of FBSDE~\eqref{eq:FBSDE} are given as
		\begin{align*}
			b(t,x,y,z)&=\kappa_y\bar{\sigma}T_y(y)\mathbf{1}+\kappa_zT_z(z)^\top,\\
			\sigma(t,x,y)&=\bar{\sigma}T_y(y)I,\\
			F(t,x,y,z)&=\begin{multlined}[t]
				r(y-(d+1))-\frac{1}{2}\mathrm{e}^{-r(T-t)}\bar{\sigma}^2u(t,x)^2\sum_{i=1}^d\sin(x_i)\\
				+\kappa_y\sum_{i=1}^dz_i+\kappa_z\mathrm{e}^{-2r(T-t)}\bar{\sigma}u(t,x)\sum_{i=1}^d\cos^2(x_i),
			\end{multlined}\\
			g(x)&=\sum_{i=1}^d\sin(x_i)+d+1,
		\end{align*}
		where
		\begin{align*}
			T_y(y)&\coloneqq\begin{cases}
				1,		& y<1,\\
				y,		& 1\leq y\leq2d+1,\\
				2d+1,	& y>2d+1,
			\end{cases}
			\quad\text{and}\quad
			T_z(z)\coloneqq\begin{cases}
				z,													& \lvert z\rvert\leq\bar{\sigma}(2d+1)\sqrt{d},\\
				\bar{\sigma}(2d+1)\sqrt{d}\frac{z}{\lvert z\rvert},	& \lvert z\rvert>\bar{\sigma}(2d+1)\sqrt{d}.
			\end{cases}
		\end{align*}
		Then, the solution of FBSDE~\eqref{eq:FBSDE} is
		\begin{align*}
			Y_t=u(t,X_t)
			\quad\text{and}\quad
			Z_t=v(t,X_t)
		\end{align*}
		with
		\begin{align*}
			 v(t,x)=\mathrm{e}^{-r(T-t)}\bar{\sigma}u(t,x)
			 \begin{pmatrix}
				\cos(x_1)\\
		 		\vdots\\
		 		\cos(x_d)
			 \end{pmatrix}.
		\end{align*}
		It is easy to see that Assumptions~\ref{ass:functions} and~\ref{ass:sol_PDE} are satisfied with $\alpha=1$.
		
		In the following, we first consider the case $d=r=\bar{\sigma}=\kappa_y=\kappa_z=T=1$ and $x_0=\frac{\pi}{4}$. Then, the conditions of \cite[Theorem~3]{Negyesi_Huang_Oosterlee_2026} are not fulfilled (see also the following Remark~\ref{rem:convergence}). Nevertheless, the conditions of Theorem~\ref{theorem:main} hold and thus we should see convergence. We train neural networks $\eta^*_N=(\hat y^*_{0,N},\hat\rho^*_N)$ for $N\in\{8,16,32,64,128,256\}$. In each training, we sample $\hat y^0_0$ from $\mathrm{Unif}(0,0.1)$ and each component of $\hat\gamma^0$ from $\mathcal{N}(0,1)$. We set the learning rate schedule by choosing $\lambda_0=0.01$, $k_\lambda=0.9549926$ and $n_\lambda=100$. The training process stops after $L=5000$ iterations. We chose $K_{\hat\rho^*_N}=6$. For each $N$, we repeat the training $50$ times. The results are given in the following figures.
		
		Figure~\ref{fig:rho_comparision} shows, for $t=0.5$, the fitted $\rho^*_N$'s (blue) in comparison with the true $v$ (black), grouped by the number of time steps $N$. Additionally, a histogram of $10^5$ realizations of $X_{0.5}$ is shown in each subplot. As can be seen, in the region where $X_{0.5}$ has high probability mass, the fitted $\hat\rho^*_N$'s become increasingly close to the true $v$ as the time grid is refined. The same holds for the fitted initial values $\hat y^*_{0,N}$: For each $N$, the squared differences to the true initial value $u(0,x_0)$, that is $\lvert\hat y^*_{0,N}-u(0,x_0)\rvert^2$, are displayed via boxplots in Figure~\ref{fig:y_init_comparision}. Lastly, Figure~\ref{fig:errors_1d} compares the $5\%$ quantile, median and $95\%$ quantile of terminal errors $\mathbb{E}\bigl[\lvert Y^{\hat\eta^*_N,\hat\pi_N}_T-g(X^{\hat\eta^*_N,\hat\pi_N}_T)\rvert^2\bigr]$  and the $L^2$-approximation error $\mathcal{E}_{\hat\eta^*_N,\hat\pi_N}$ in dependence of $N$. Additionally, the three components of $\mathcal{E}_{\hat\eta^*_N,\hat\pi_N}$ are given separately. All errors are calculated by averaging over the respective errors resulting from $25000$ simulated independent paths of the Brownian motion. Overall, we nearly obtain an empirical convergence rate of $\lvert\hat\pi_N\rvert^{1/2}$, that is the convergence rate from Theorem~\ref{theorem:main}.
		\begin{figure}
		    \centering
		    \includegraphics[width=\textwidth]{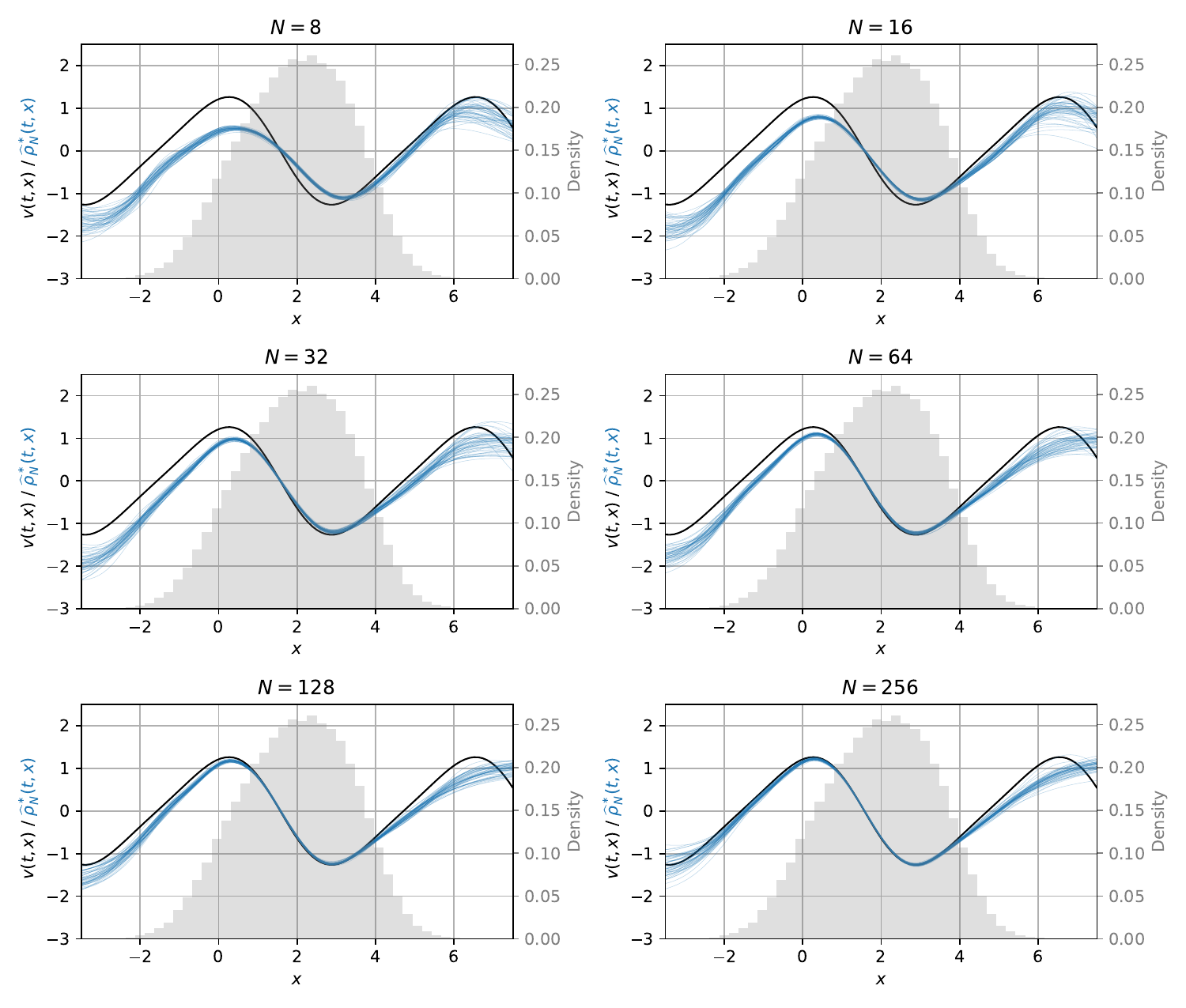}
		    \caption{Fitted $\hat\rho^*_N$'s vs.\@ true $v$ for $t=0.5$ grouped by the number of time steps~$N$.}
		    \label{fig:rho_comparision}
		\end{figure}
		\begin{figure}
		    \centering
		    \includegraphics[width=\textwidth]{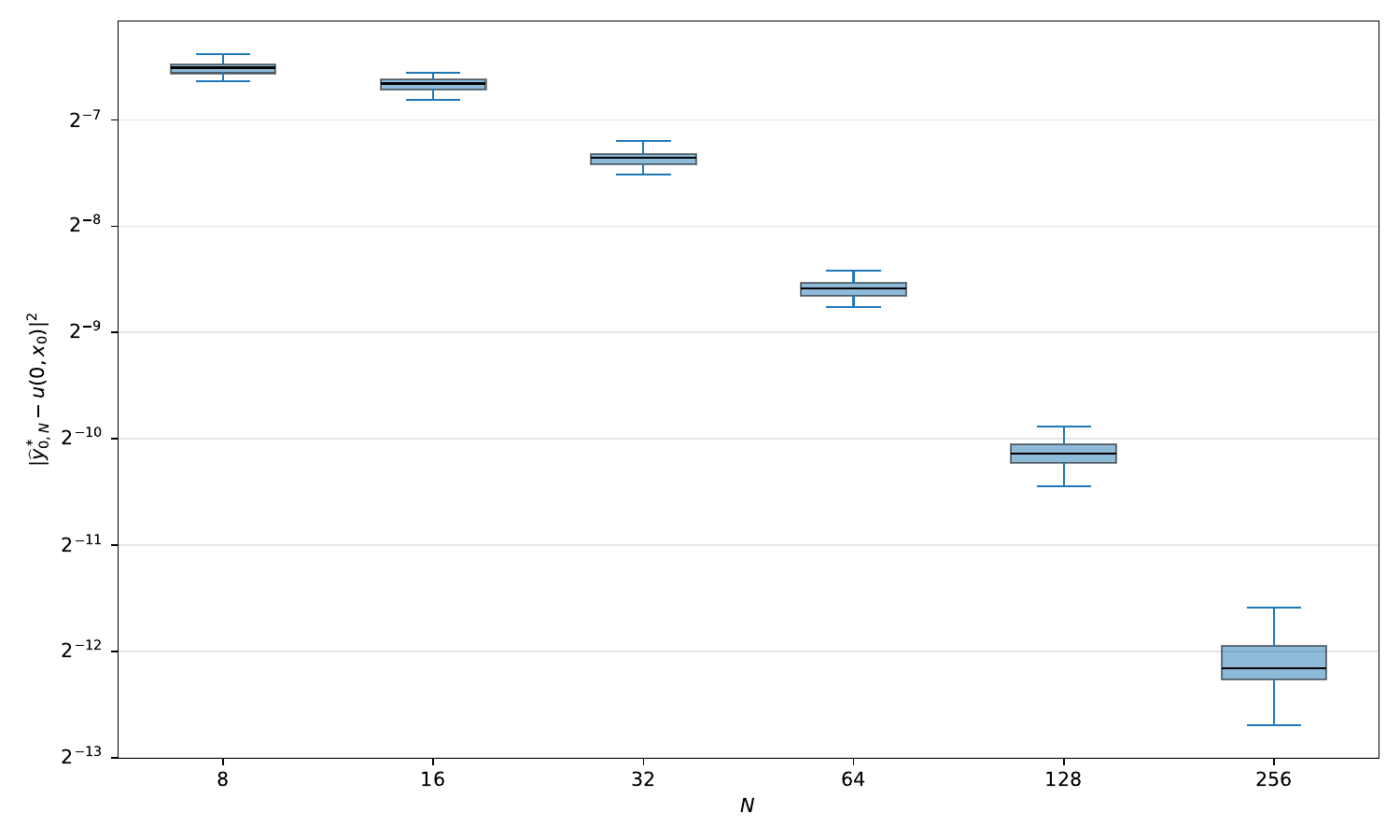}
		    \caption{Boxplots of the errors $\lvert\hat y^*_{0,N}-u(0,x_0)\rvert^2$ grouped by the number of time steps $N$.}
		    \label{fig:y_init_comparision}
		\end{figure}
		\begin{figure}
		    \centering
		    \includegraphics[width=\textwidth]{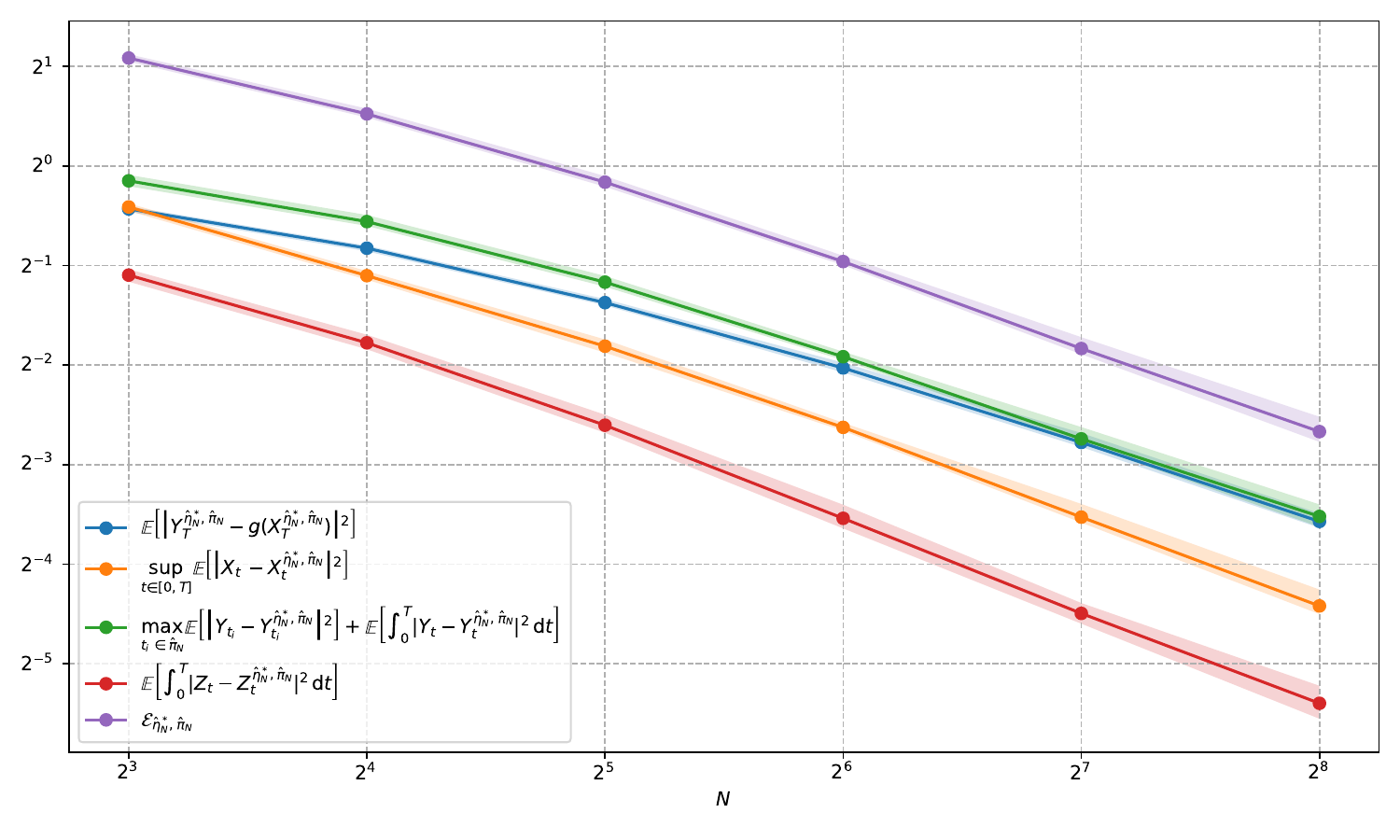}
		    \caption{Medians of the respective errors ($5\%$-$95\%$-quantile bands shaded) for $d=1$.}
		    \label{fig:errors_1d}
		\end{figure}
		
		Additionally, we consider the case $d=10$, $r=1$, $\bar{\sigma}=0.2$, $\kappa_y=0.2$, $\kappa_z=0.1$, $T=1$ and $x_0=(\frac{\pi}{4},\ldots,\frac{\pi}{4})^\top$. Again, the conditions of \cite[Theorem~3]{Negyesi_Huang_Oosterlee_2026} are not fulfilled, but the conditions of Theorem~\ref{theorem:main} are. We train networks $\eta^*_N=(\hat y^*_{0,N},\hat\rho^*_N)$ for $N\in\{8,16,32,64,128,256\}$. In each training, we sample $\hat y^0_0$ from $\mathrm{Unif}(0,0.1)$ and each component of $\hat\gamma^0$ from $\mathcal{N}(0,1)$. We set the learning rate schedule by choosing $\lambda_0=0.01$, $k_\lambda=0.99$ and $n_\lambda=100$. The training process stops after $L=2^{16}$ iterations. We choose $K_{\hat\rho^*_N}=100$. The terminal errors $\mathbb{E}\bigl[\lvert Y^{\hat\eta^*_N,\hat\pi_N}_T-g(X^{\hat\eta^*_N,\hat\pi_N}_T)\rvert^2\bigr]$ and the $L^2$-approximation error $\mathcal{E}_{\hat\eta^*_N,\hat\pi_N}$ including its three components are are given in Figure~\ref{fig:errors_10d}. Again, all errors are calculated by averaging over the respective errors resulting from $25000$ simulated independent paths of the Brownian motion. The empirical convergence rate is slightly worse than $\lvert\hat\pi_N\rvert^{1/2}$.
		\begin{figure}
		    \centering
		    \includegraphics[width=\textwidth]{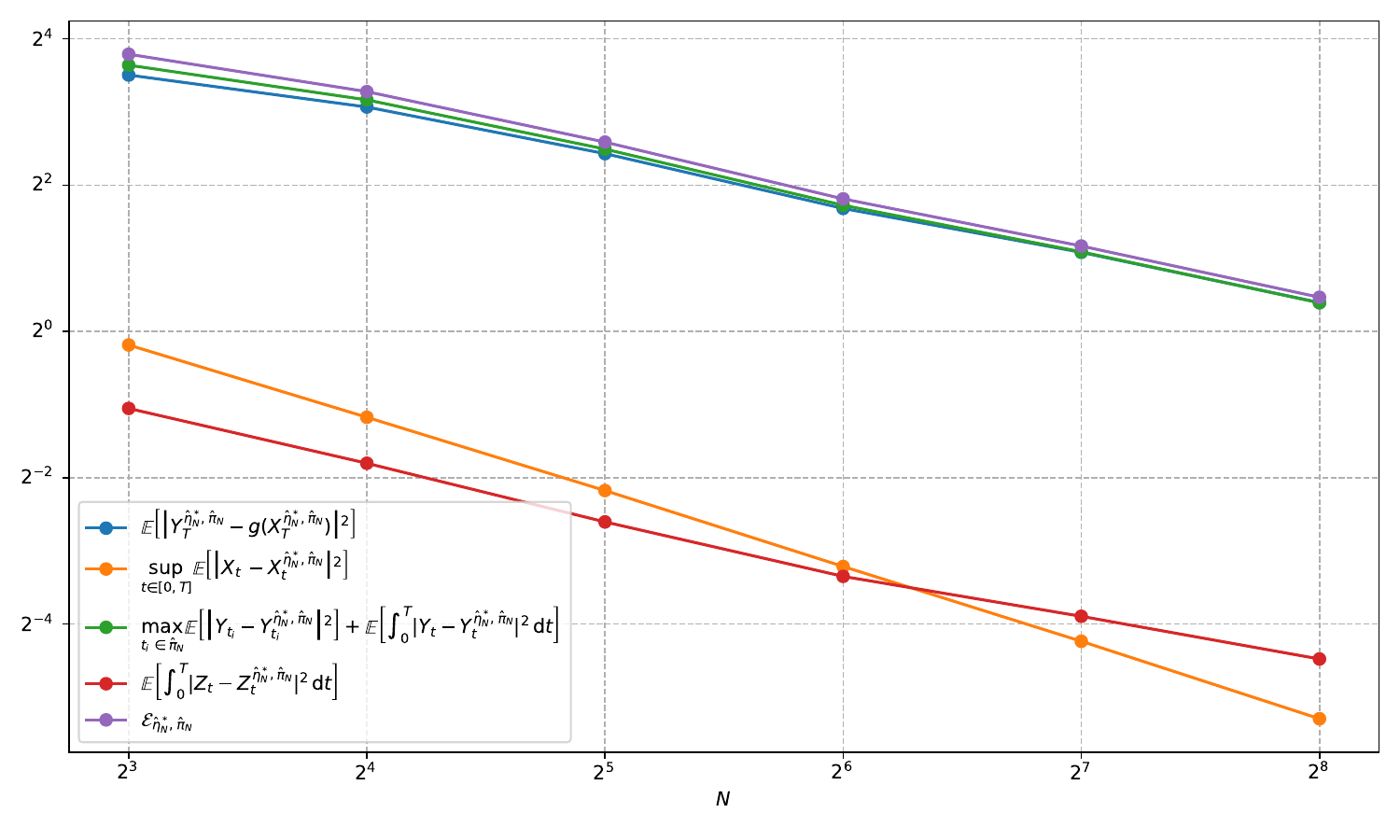}
		    \caption{Errors for $d=10$.}
		    \label{fig:errors_10d}
		\end{figure}
	\end{example}

	\begin{remark}\label{rem:convergence}
		For the given coefficients in the example above, it is a necessary condition for the applicability of \cite[Theorem~3]{Negyesi_Huang_Oosterlee_2026} that
		\begin{align*}
			\inf_{\lambda_1\in\mathbb{R}_{>0}}\kappa_z^2d^2\frac{\mathrm{e^{\lambda_1T}}}{\lambda_1}=\kappa_z^2d^2T\mathrm{e}<1.
		\end{align*}
		This follows since, in the setting of \cite[Theorem~3]{Negyesi_Huang_Oosterlee_2026}, a lower bound for $\bar{B}$ is easily derived as
		\begin{align*}
			\bar{B}>\kappa_z^2d^2\frac{\mathrm{e}^{\lambda_1T}}{\lambda_1},
		\end{align*}
		noting that the minimal squared Lipschitz constant of $b$ in $z$ equals $\kappa_z^2$, the minimal squared Lipschitz constant of $g$ in $x$ equals $d$, and $b$ and $\sigma$ are independent of $x$. Consequently, if
		\begin{align*}
			\kappa_z^2d^2T\mathrm{e}\geq1,
		\end{align*}	
		then \cite[Theorem~3]{Negyesi_Huang_Oosterlee_2026} cannot be applied. In particular, this is the case for both parameter sets considered in our example.
	\end{remark}

	\section{Proof of Theorem~\ref{theorem:main}}\label{sec:proof_main_theorem}
	\subsection{The Auxiliary Decoupled FBSDE}
	In this section we will prove Theorem~\ref{theorem:main}. To this end, we introduce an auxiliary decoupled FBSDE where the backward part and the control of the solution triple are connected with the forward part (and the time) solely via the same functions $u$, $u_x$ and $\sigma$ as the components of the solution as the original coupled FBSDE. This auxiliary decoupled FBSDE is then approximated by the same Euler-Maruyama scheme as the original FBSDE.
	
	In what follows, let $\tilde{x}\coloneqq(x^\top,\tilde{y})^\top$ and $\tilde{x}'\coloneqq(x'^\top,\tilde{y}')^\top$ with $x,x'\in\mathbb{R}^d$ and $\tilde{y},\tilde{y}'\in\mathbb{R}$, such that $\tilde{x},\tilde{x}'\in\mathbb{R}^{d+1}$. Fix $y_0\in\mathbb{R}$, $K_\rho,H_\rho\in\mathbb{R}_{>0}$ and $\rho\in\Theta_{K_\rho,H_\rho}$. We introduce the coefficients of the forward SDE of the auxiliary FBSDE as
	\begin{align*}
		b^\rho&\colon[0,T]\times\mathbb{R}^{d+1}\to\mathbb{R}^{d+1},\qquad
		(t,\tilde{x})\mapsto
		\begin{pmatrix}
			b(t,x,\tilde{y},\rho(t,x))\\
			F(t,x,\tilde{y},\rho(t,x))
		\end{pmatrix},
	\end{align*}
	and
	\begin{align*}
		\sigma^\rho&\colon[0,T]\times\mathbb{R}^{d+1}\to\mathbb{R}^{(d+1)\times d},\qquad
		(t,\tilde{x})\mapsto
		\begin{pmatrix}
			\sigma(t,x,\tilde{y})\\
			\rho(t,x)
		\end{pmatrix},
	\end{align*}
	Moreover, the generator $F^\rho\colon[0,T]\times\mathbb{R}^{d+1}\times\mathbb{R}\times(\mathbb{R}^d)^*\to\mathbb{R}$ of the backward SDE is given by
	\begin{align*}
		F^\rho(t,\tilde{x},y,z)&\coloneqq
		\begin{multlined}[t]
			F(t,x,y,\zeta(t,\tilde{x},y,z))
		+u_x(t,x)\bigl[b(t,x,\tilde{y},\rho(t,x))-b(t,x,y,\zeta(t,\tilde{x},y,z))\bigr]\\
		+\frac{1}{2}\mathrm{tr}\bigl[u_{xx}(t,x)\bigl((\sigma\sigma^\top)(t,x,\tilde{y})-(\sigma\sigma^\top)(t,x,y)\bigr)\bigr],
		\end{multlined}
	\end{align*}
	where the function $\zeta\colon[0,T]\times\mathbb{R}^{d+1}\times\mathbb{R}\times(\mathbb{R}^d)^*\to\mathbb{R}$ is given as
	\begin{align*}
		\zeta(t,\tilde{x},y,z)\coloneqq h(z)\sigma^{-1}(t,x,\tilde{y})\sigma(t,x,y)
	\end{align*}
	with truncation
	\begin{align*}
		h\colon(\mathbb{R}^d)^*\to(\mathbb{R}^d)^*,\quad
		z\mapsto
		\begin{cases}
			z & \text{if }\lvert z\rvert\leq K_\rho\vee K_\vartheta,\\
			(K_\rho\vee K_\vartheta)\frac{z}{\lvert z\rvert} & \text{if }\lvert z\rvert>K_\rho\vee K_\vartheta.
		\end{cases}
	\end{align*}
	Obviously, the truncation $h$ is bounded by $K_\rho\vee K_\vartheta$. Also, the truncation $h$ is Lipschitz continuous with constant $1$ as metric projection onto a closed convex set.

	Instead of the original FBSDE~\eqref{eq:FBSDE}, given the initial condition $\tilde{x}_0\coloneqq(x_0^\top,y_0)^\top$, we now look at the auxiliary FBSDE
	\begin{align}\label{eq:FBSDE-rho}
		\left\{ \begin{aligned} 
			\tilde{X}^\eta_t&=\tilde{x}_0+\int_0^tb^\rho(s,\tilde{X}^\eta_s)\,\mathrm{d}s+\int_0^t\sigma^\rho(s,\tilde{X}^\eta_s)\,\mathrm{d}W_s,\\
			Y^\eta_t&=g(X^\eta_T)-\int_t^TF^\rho(s,\tilde{X}^\eta_s,Y^\eta_s,Z^\eta_s)\mathrm{d}s-\int_t^TZ^\eta_s\,\mathrm{d}W_s,
		\end{aligned} \right.
	\end{align}
	where $Y^\eta_t$ and $Z^\eta_t$ do not couple into the equation for $\tilde{X}^\eta_t\coloneqq((X^\eta_t)^\top,\tilde{Y}^\eta_t)^\top$.
	
	We approximate the processes $X^\eta$, $Y^\eta$ and $Z^\eta$ by the triple $(X^{\eta,\pi},Y^{\eta,\pi},Z^{\eta,\pi})$, which we obtain from the Euler-Maruyama scheme~\eqref{eq:Euler}. Additionally, to approximate the process $\tilde{Y}^\eta$, we define the process $\tilde{Y}^{\eta,\pi}\equiv Y^{\eta,\pi}$. Setting $\tilde{X}^{\eta,\pi}\coloneqq((X^{\eta,\pi})^\top,(\tilde{Y}^{\eta,\pi})^\top)^\top$, this `extended' Euler-Maruyama scheme can easily be reformulated as
	\begin{align}\label{eq:Euler_extented}
		&\left\{\begin{aligned} 
			\tilde{X}^{\eta,\pi}_{t_0}&=\tilde{x}_0,\\
			\tilde{X}^{\eta,\pi}_{t_{i+1}}&=\tilde{X}^{\eta,\pi}_{t_i}+b^\rho(t_i,\tilde{X}^{\eta,\pi}_{t_i})\Delta_i+\sigma^\rho(t_i,\tilde{X}^{\eta,\pi}_{t_i})\Delta W_i,\;\forall i\in\{0,\ldots,N-1\},\\
			Z^{\eta,\pi}_{t_i}&=\rho(t_i,X^{\eta,\pi}_{t_i}),\;\forall i\in\{0,\ldots,N\},\\			
			\tilde{X}^{\eta,\pi}_t&=\tilde{X}^{\eta,\pi}_{\Pi(t)},\;Z^{\eta,\pi}_t=Z^{\eta,\pi}_{\Pi(t)},\;\forall t\in[0,T]\setminus\pi,\\
			Y^{\eta,\pi}_t&=\tilde{Y}^{\eta,\pi}_t,\;\forall t\in[0,T].
		\end{aligned}\right.
	\end{align}
	
	At this point it gets clear why the generator $F^\rho$ is defined in such a way: We can reasonably use the `same' approximation scheme to approximate the solutions of FBSDE~\eqref{eq:FBSDE} and FBSDE~\eqref{eq:FBSDE-rho}. In particular, at the grid points $t_i$, $i=0,\ldots,N$, it holds that
	\begin{align*}
		\zeta(t_i,\tilde{X}^{\eta,\pi}_{t_i},\tilde{Y}^{\eta,\pi}_{t_i},\rho(t_i,X^{\eta,\pi}_{t_i}))=\rho(t_i,X^{\eta,\pi}_{t_i}).
	\end{align*}
	Consequently, since $Y^{\eta,\pi}_{t_i}=\tilde{Y}^{\eta,\pi}_{t_i}$ and $Z^{\eta,\pi}_{t_i}=\rho(t_i,X^{\eta,\pi}_{t_i})$, we can simplify
	\begin{align}\label{eq:F_rho_for_approximation_triple}
		F^\rho(t_i,\tilde{X}^{\eta,\pi}_{t_i},Y^{\eta,\pi}_{t_i},Z^{\eta,\pi}_{t_i})
		&=F^\rho(t_i,\tilde{X}^{\eta,\pi}_{t_i},\tilde{Y}^{\eta,\pi}_{t_i},\rho(t_i,X^{\eta,\pi}_{t_i}))\nonumber\\
		&=F(t_i,X^{\eta,\pi}_{t_i},\tilde{Y}^{\eta,\pi}_{t_i},\rho(t_i,X^{\eta,\pi}_{t_i}))\nonumber\\
		&=F(t_i,X^{\eta,\pi}_{t_i},Y^{\eta,\pi}_{t_i},Z^{\eta,\pi}_{t_i}).
	\end{align}
	Hence, for $i\in\{0,\ldots,N-1\}$, we can rewrite
	\begin{align*}
		Y^{\eta,\pi}_{t_{i+1}}=Y^{\eta,\pi}_{t_i}+F^\rho(t_i,\tilde{X}^{\eta,\pi}_{t_i},Y^{\eta,\pi}_{t_i},Z^{\eta,\pi}_{t_i})\Delta_i+Z^{\eta,\pi}_{t_i}\Delta W_i,
	\end{align*}
	i.\,e., the tuple $(\tilde{X}^{\eta,\pi},Y^{\eta,\pi},Z^{\eta,\pi})$ can be considered as the deep BSDE approximation for the decoupled FBSDE with coefficients $b^\rho$, $\sigma^\rho$ and $F^\rho$. We now first study this decoupled FBSDE and prove the following theorem.
	
	\begin{theorem}\label{theorem:auxFBSDE-sol}
		FBSDE~\eqref{eq:FBSDE-rho} admits a unique adapted solution $(\tilde{X}^\eta,Y^\eta,Z^\eta)$. Moreover, for each $t\in[0,T]$, it holds that
		\begin{align*}
			Y^\eta_t=u(t,X^\eta_t)
			\quad\text{and}\quad
			Z^\eta_t=\vartheta(t,X^\eta_t,\tilde{Y}^\eta_t).
		\end{align*}
	\end{theorem}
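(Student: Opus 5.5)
Since FBSDE~\eqref{eq:FBSDE-rho} is decoupled, I would first solve the forward SDE for $\tilde X^\eta$, then exhibit an explicit solution of the BSDE via It\^o's formula, and finally obtain uniqueness from standard Lipschitz BSDE theory.

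\emph{Forward SDE.} By Assumption~\ref{ass:functions} and $\rho\in\Theta_{K_\rho,H_\rho}$, the coefficients $b^\rho$ and $\sigma^\rho$ are Lipschitz in $\tilde x$, uniformly in $t$. For example,
\[
\lvert b(t,x,\tilde y,\rho(t,x))-b(t,x',\tilde y',\rho(t,x'))\rvert\le H_b(1+H_\rho)\bigl(\lvert x-x'\rvert+\lvert\tilde y-\tilde y'\rvert\bigr),
\]
and the same argument works for $F$, $\sigma$ and $\rho$. They also have linear growth, since they are continuous in $t$ on the compact interval $[0,T]$. Hence the forward SDE has a unique strong solution $\tilde X^\eta$ with $\mathbb{E}[\sup_t\lvert\tilde X^\eta_t\rvert^2]<\infty$.

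\emph{Backward SDE, uniqueness.} I would check that $F^\rho$ is Lipschitz in $(y,z)$, uniformly in $(t,\tilde x)$. The map $\zeta$ is Lipschitz in $(y,z)$:
\[
\lvert h(z)\sigma^{-1}\sigma(\cdot,y)-h(z')\sigma^{-1}\sigma(\cdot,y')\rvert\le K_{\sigma^{-1}}K_\sigma\lvert z-z'\rvert+(K_\rho\vee K_\vartheta)K_{\sigma^{-1}}H_\sigma\lvert y-y'\rvert .
\]
This uses the truncation $h$ together with the boundedness of $\sigma$ and $\sigma^{-1}$. The remaining terms are then Lipschitz because $u_x$, $u_{xx}$ are bounded and $b$, $F$, $\sigma\sigma^\top$ are Lipschitz. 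Since $g$ is Lipschitz and $\tilde X^\eta$ is square integrable, the terminal condition is square integrable. Moreover, $F^\rho(\cdot,\tilde X^\eta,0,0)$ is square integrable, because $F^\rho$ has linear growth. Pardoux--Peng then yields a unique solution $(Y^\eta,Z^\eta)$.

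\emph{Identification.} Apply It\^o's formula to $u(t,X^\eta_t)$, where $\mathrm{d}X^\eta_t=b(t,X^\eta_t,\tilde Y^\eta_t,\rho(t,X^\eta_t))\,\mathrm{d}t+\sigma(t,X^\eta_t,\tilde Y^\eta_t)\,\mathrm{d}W_t$. Set $Y_t:=u(t,X^\eta_t)$ and $Z_t:=\vartheta(t,X^\eta_t,\tilde Y^\eta_t)$. Substituting $u_t$ from PDE~\eqref{eq:PDE} gives
\begin{align*}
\mathrm{d}Y_t=\Bigl(&F(t,X^\eta_t,u,v)-u_xb(t,X^\eta_t,u,v)-\tfrac12\mathrm{tr}\bigl[u_{xx}(\sigma\sigma^\top)(t,X^\eta_t,u)\bigr]\\
&+u_xb(t,X^\eta_t,\tilde Y^\eta_t,\rho)+\tfrac12\mathrm{tr}\bigl[u_{xx}(\sigma\sigma^\top)(t,X^\eta_t,\tilde Y^\eta_t)\bigr]\Bigr)\mathrm{d}t+Z_t\,\mathrm{d}W_t,
\end{align*}
where $u,v$ are evaluated at $(t,X^\eta_t)$. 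The terminal value is $Y_T=g(X^\eta_T)$. It remains to match this drift with $F^\rho(t,\tilde X^\eta_t,Y_t,Z_t)$.

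\emph{Main step.} The key identity is $\zeta(t,\tilde X^\eta_t,Y_t,Z_t)=v(t,X^\eta_t)$. Since $\lvert Z_t\rvert\le K_\vartheta$ by Remark~\ref{rem:HLC_uxsigma}, we have $h(Z_t)=Z_t$, and therefore
\[
\zeta=u_x\sigma(t,X^\eta_t,\tilde Y^\eta_t)\,\sigma^{-1}(t,X^\eta_t,\tilde Y^\eta_t)\,\sigma(t,X^\eta_t,u)=u_x\sigma(t,X^\eta_t,u(t,X^\eta_t))=v(t,X^\eta_t).
\]
Inserting $y=u$ and $\zeta=v$ into the definition of $F^\rho$ reproduces exactly the drift above. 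Hence $(Y,Z)$ solves the BSDE, and it has the required integrability because $u$ is Lipschitz in $x$ and $\vartheta$ is bounded. By uniqueness, $Y^\eta=Y$ and $Z^\eta=Z$. This matching step was the reason for introducing the truncation $h$ and the factor $\sigma^{-1}(\cdot,\tilde y)\sigma(\cdot,y)$ in $\zeta$; the rest is routine.
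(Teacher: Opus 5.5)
Your proposal is correct and follows essentially the same route as the paper. You solve the decoupled forward SDE using the uniform Lipschitz and linear-growth properties of $b^\rho,\sigma^\rho$, and you obtain a unique BSDE solution from the uniform Lipschitz continuity of $F^\rho$ in $(y,z)$ together with square-integrable data. You then use It\^o's formula and the PDE to check that $(u(t,X^\eta_t),\vartheta(t,X^\eta_t,\tilde Y^\eta_t))$ solves the BSDE, with the crux being $h(Z_t)=Z_t$ so that $\zeta(t,\tilde X^\eta_t,Y_t,Z_t)=v(t,X^\eta_t)$. The only differences are cosmetic: you derive the linear growth of $b^\rho$ and the integrability of $F^\rho(\cdot,\tilde X^\eta,0,0)$ directly, where the paper cites Lemmas~\ref{lem:linear_growth} and~\ref{lem:Lipschitz}(b), and your direct argument is equally valid.
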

	
	To ensure that the FBSDE~\eqref{eq:FBSDE-rho} is uniquely solvable certain regularity conditions concerning $b^\rho$, $\sigma^\rho$ and $F^\rho$ are crucial, which we will establish next. For later use, it is important to keep track of the dependence of the constants on $K_\rho$ and $H_\rho$.
	\begin{remark}
		In what follows, we will repeatedly use the following basic inequalities without explicitly referring to them:
		\begin{itemize}
			\item[(i)]
				Let $m\in\mathbb{N}$ and $a_1,\ldots,a_m\in\mathbb{R}$. It holds that
				\begin{align*}
					\left(\sum_{i=1}^ma_i\right)^2\leq m\sum_{i=1}^ma_i^2.
				\end{align*}				
			\item[(ii)]
				For any two $\tilde{x}=(x^\top,\tilde{y})^\top,\tilde{x}'=(x'^\top,\tilde{y}')^\top\in\mathbb{R}^{d+1}$, we have that
				\begin{align*}
					\lvert x-x'\rvert+\lvert\tilde{y}-\tilde{y}'\rvert
					\leq\sqrt{2}\lvert\tilde{x}-\tilde{x}'\rvert.
				\end{align*}
		\end{itemize}
	\end{remark}	

	\begin{lem}\label{lem:Lipschitz_driver_diffusion}
		Let
		\begin{align*}
			H_{b^\rho}\coloneqq(\sqrt{2}+H_\rho)\sqrt{H_b^2+H_F^2}
			\quad\text{and}\quad
			H_{\sigma^\rho}\coloneqq\sqrt{2H_\sigma^2+H_\rho^2}.
		\end{align*}
		For any $(t,\tilde{x}),(t',\tilde{x}')\in[0,T]\times\mathbb{R}^{d+1}$ it holds that
		\begin{align*}
			\lvert b^\rho(t,\tilde{x})-b^\rho(t',\tilde{x}')\rvert&\leq H_{b^\rho}\bigl(\lvert t-t'\rvert^{1/2}+\lvert \tilde{x}-\tilde{x}'\rvert\bigr),\\
			\lvert\sigma^\rho(t,\tilde{x})-\sigma^\rho(t',\tilde{x}')\rvert&\leq H_{\sigma^\rho}\bigl(\lvert t-t'\rvert^{1/2}+\lvert \tilde{x}-\tilde{x}'\rvert\bigr),
		\end{align*}
		i.\,e., the functions $b^\rho$ and $\sigma^\rho$ are $\frac{1}{2}$-Hölder continuous in $t$ and Lipschitz continuous in $\tilde{x}$ with constants $H_{b^\rho}$ and $H_{\sigma^\rho}$.
	\end{lem}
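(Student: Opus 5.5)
We bound the two blocks of each function separately, using the Hölder/Lipschitz conditions of Assumption~\ref{ass:functions}(ii) together with the defining property of $\rho\in\Theta_{K_\rho,H_\rho}$. Then we combine the blocks via the Euclidean/Frobenius norm of the stacked vector (resp.\ matrix). Throughout we abbreviate $\delta\coloneqq\lvert t-t'\rvert^{1/2}$ and use $\lvert x-x'\rvert\leq\lvert\tilde{x}-\tilde{x}'\rvert$ and $\lvert x-x'\rvert+\lvert\tilde{y}-\tilde{y}'\rvert\leq\sqrt{2}\lvert\tilde{x}-\tilde{x}'\rvert$.

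\emph{Drift.} For the first block we have
\begin{align*}
\lvert b(t,x,\tilde{y},\rho(t,x))-b(t',x',\tilde{y}',\rho(t',x'))\rvert\leq H_b\bigl(\delta+\lvert x-x'\rvert+\lvert\tilde{y}-\tilde{y}'\rvert+\lvert\rho(t,x)-\rho(t',x')\rvert\bigr).
\end{align*}
Inserting $\lvert\rho(t,x)-\rho(t',x')\rvert\leq H_\rho(\delta+\lvert x-x'\rvert)$, the right-hand side is at most $H_b\bigl((1+H_\rho)\delta+\sqrt{2}\lvert\tilde{x}-\tilde{x}'\rvert+H_\rho\lvert\tilde{x}-\tilde{x}'\rvert\bigr)$. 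Since $1\leq\sqrt{2}$, this is bounded by $H_b(\sqrt{2}+H_\rho)(\delta+\lvert\tilde{x}-\tilde{x}'\rvert)$. The same computation for $F$ gives the bound $H_F(\sqrt{2}+H_\rho)(\delta+\lvert\tilde{x}-\tilde{x}'\rvert)$. Since $\lvert b^\rho(t,\tilde{x})-b^\rho(t',\tilde{x}')\rvert$ is the square root of the sum of the two squared block differences, we obtain exactly $H_{b^\rho}=(\sqrt{2}+H_\rho)\sqrt{H_b^2+H_F^2}$.

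\emph{Diffusion.} Here the squared Frobenius norm splits as $\lvert\sigma(t,x,\tilde{y})-\sigma(t',x',\tilde{y}')\rvert^2+\lvert\rho(t,x)-\rho(t',x')\rvert^2$. The first term is at most $H_\sigma^2(\delta+\lvert x-x'\rvert+\lvert\tilde{y}-\tilde{y}'\rvert)^2$. To get the constant $2H_\sigma^2$ rather than something larger, we avoid the crude $m$-term inequality and bound instead
\begin{align*}
\delta+\lvert x-x'\rvert+\lvert\tilde{y}-\tilde{y}'\rvert\leq\delta+\sqrt{2}\lvert\tilde{x}-\tilde{x}'\rvert\leq\sqrt{2}\bigl(\delta+\lvert\tilde{x}-\tilde{x}'\rvert\bigr),
\end{align*}
so that the first term is at most $2H_\sigma^2(\delta+\lvert\tilde{x}-\tilde{x}'\rvert)^2$. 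The second term is at most $H_\rho^2(\delta+\lvert\tilde{x}-\tilde{x}'\rvert)^2$. Adding the two and taking square roots yields $H_{\sigma^\rho}=\sqrt{2H_\sigma^2+H_\rho^2}$.

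The only point needing care is this bookkeeping of constants: keep the common factor $(\delta+\lvert\tilde{x}-\tilde{x}'\rvert)$ intact in every block before squaring and summing, so that the stated constants, which are linear in $H_\rho$, come out exactly. There is no genuine analytic obstacle.
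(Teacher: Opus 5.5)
Your proposal is correct and matches the paper's proof essentially line by line. The paper likewise splits $\lvert b^\rho(t,\tilde{x})-b^\rho(t',\tilde{x}')\rvert^2$ and $\lvert\sigma^\rho(t,\tilde{x})-\sigma^\rho(t',\tilde{x}')\rvert^2$ into the squared block differences, inserts the Hölder/Lipschitz bounds for $b$, $F$, $\sigma$ and $\rho$, and uses $\lvert x-x'\rvert+\lvert\tilde{y}-\tilde{y}'\rvert\leq\sqrt{2}\lvert\tilde{x}-\tilde{x}'\rvert$ to obtain the factors $(\sqrt{2}+H_\rho)^2(H_b^2+H_F^2)$ and $2H_\sigma^2+H_\rho^2$; the only difference is that you bound each block before summing the squares, which amounts to the same computation.
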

	\begin{proof}
		The definition of the Euclidean norm in connection with the Hölder and Lipschitz conditions on $b$, $F$, and~$\rho$ imply
		\begin{align*}
			\lvert b^\rho(t,\tilde{x})-b^\rho(t',\tilde{x}')\rvert^2
			&=\begin{multlined}[t]
				\lvert b(t,x,\tilde{y},\rho(t,x))-b(t',x',\tilde{y}',\rho(t',x'))\rvert^2\\
				+\lvert F(t,x,\tilde{y},\rho(t,x))-F(t',x',\tilde{y}',\rho(t',x'))\rvert^2
			\end{multlined}\\
			&\leq(H_b^2+H_F^2)\bigl(\lvert t-t'\rvert^{1/2}+\lvert x-x'\rvert+\lvert\tilde{y}-\tilde{y}'\rvert+\lvert\rho(t,x)-\rho(t',x')\rvert\bigr)^2\\
			&\leq(H_b^2+H_F^2)(\sqrt{2}+H_\rho)^2\bigl(\lvert t-t'\rvert^{1/2}+\lvert\tilde{x}-\tilde{x}'\rvert\bigr)^2.
		\end{align*}
		Taking square roots on both sides shows the first inequality. Analogously, now taking the definition of the Frobenius norm into account and applying the $\frac{1}{2}$-Hölder and Lipschitz continutity of $\sigma$ and $\rho$, we obtain
		\begin{align*}
			\lvert\sigma^\rho(t,\tilde{x})-\sigma^\rho(t,\tilde{x}')\rvert^2
			&=\lvert\sigma(t,x,\tilde{y})-\sigma(t',x',\tilde{y}')\rvert^2+\lvert\rho(t,x)-\rho(t',x')\rvert^2\\
			&\leq H_\sigma^2\bigl(\lvert t-t'\rvert^{1/2}+\lvert x-x'\rvert+\lvert \tilde{y}-\tilde{y}'\rvert\bigr)^2+H_\rho^2\bigl(\lvert t-t'\rvert^{1/2}+\lvert x-x'\rvert\bigr)^2\\
			&\leq(2H_\sigma^2+H_\rho^2)\bigl(\lvert t-t'\rvert^{1/2}+\lvert \tilde{x}-\tilde{x}'\rvert\bigr)^2.
		\end{align*}
		Taking square roots on both sides shows the second inequality.
	\end{proof}
	\begin{lem}\label{lem:linear_growth}
		Define
		\begin{align*}
		 	L_{b^\rho}&\coloneqq\sqrt{12H_F^2\vee\bigl(6H_F^2(T+K_\rho^2)+K_b^2+2F(0,0,0,0)^2\bigr)},\\
		 	K_{\sigma^\rho}&\coloneqq\sqrt{K_\sigma^2+K_\rho^2}.
		\end{align*}
		For any $(t,\tilde{x})\in[0,T]\times\mathbb{R}^{d+1}$ it holds that
		\begin{align*}
			\lvert b^\rho(t,\tilde{x})\rvert^2\leq L_{b^\rho}^2(1+\lvert\tilde{x}\rvert^2)
			\quad\text{and}\quad
			\lvert\sigma^\rho(t,\tilde{x})\rvert^2\leq K_{\sigma^\rho}^2.
		\end{align*}
	\end{lem}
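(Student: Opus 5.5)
The plan is a direct componentwise estimate based on the block structure of $b^\rho$ and $\sigma^\rho$. Since the Euclidean (resp.\ Frobenius) norm squared of a stacked vector (resp.\ matrix) is the sum of the squared norms of its blocks, I will bound each block separately and then add the bounds.

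For the diffusion coefficient, I use
\begin{align*}
\lvert\sigma^\rho(t,\tilde{x})\rvert^2=\lvert\sigma(t,x,\tilde{y})\rvert^2+\lvert\rho(t,x)\rvert^2 .
\end{align*}
The first term is at most $K_\sigma^2$ by Assumption~\ref{ass:functions}(iii), and the second is at most $K_\rho^2$ because $\rho\in\Theta_{K_\rho,H_\rho}$. This gives $K_{\sigma^\rho}^2$ at once.

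For the drift, I write
\begin{align*}
\lvert b^\rho(t,\tilde{x})\rvert^2=\lvert b(t,x,\tilde{y},\rho(t,x))\rvert^2+\lvert F(t,x,\tilde{y},\rho(t,x))\rvert^2 .
\end{align*}
The first summand is bounded by $K_b^2$. For the second, I compare with $F(0,0,0,0)$ and use $(a+b)^2\le 2a^2+2b^2$:
\begin{align*}
\lvert F(t,x,\tilde{y},\rho(t,x))\rvert^2\le 2F(0,0,0,0)^2+2H_F^2\bigl(t^{1/2}+(\lvert x\rvert+\lvert\tilde{y}\rvert)+\lvert\rho(t,x)\rvert\bigr)^2 .
\end{align*}
Here the Hölder/Lipschitz condition on $F$ from Assumption~\ref{ass:functions}(ii) is applied with $(t',x',y',z')=0$. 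Next I group the bracket into the three terms $t^{1/2}$, $\lvert x\rvert+\lvert\tilde{y}\rvert$ and $\lvert\rho(t,x)\rvert$, and apply $(\sum_{i=1}^3a_i)^2\le 3\sum_{i=1}^3 a_i^2$. I then bound these using $t\le T$, $(\lvert x\rvert+\lvert\tilde{y}\rvert)^2\le 2\lvert\tilde{x}\rvert^2$ and $\lvert\rho\rvert\le K_\rho$. This yields
\begin{align*}
\lvert F(t,x,\tilde{y},\rho(t,x))\rvert^2\le 2F(0,0,0,0)^2+6H_F^2(T+K_\rho^2)+12H_F^2\lvert\tilde{x}\rvert^2 .
\end{align*}

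Adding the two drift contributions, the constant part is $K_b^2+2F(0,0,0,0)^2+6H_F^2(T+K_\rho^2)$ and the coefficient of $\lvert\tilde{x}\rvert^2$ is $12H_F^2$. Both are at most $L_{b^\rho}^2$ by its definition as the square root of a maximum. Hence $\lvert b^\rho(t,\tilde{x})\rvert^2\le L_{b^\rho}^2(1+\lvert\tilde{x}\rvert^2)$.

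There is no genuine obstacle here. The only point needing care is the grouping of the three summands in the bracket, since this is what reproduces the exact constants $6$ and $12$ in $L_{b^\rho}$; splitting into four terms would give slightly worse constants.
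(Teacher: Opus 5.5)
Your proof is correct and follows the paper's argument essentially verbatim: the blockwise split of the norms, comparison with $F(0,0,0,0)$, the three-term grouping $t^{1/2}$, $\lvert x\rvert+\lvert\tilde{y}\rvert$, $\lvert\rho(t,x)\rvert$ that produces exactly the constants $6$ and $12$, absorption into the maximum defining $L_{b^\rho}^2$, and the same one-line bound for $\sigma^\rho$. Only your closing aside is slightly off. A four-term split gives $8H_F^2(T+K_\rho^2)+8H_F^2\lvert\tilde{x}\rvert^2$, which is worse in the constant part but better in the $\lvert\tilde{x}\rvert^2$ coefficient, so it yields different rather than uniformly worse constants.
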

	\begin{proof}
		Using the Hölder and Lipschitz conditions on $F$ as well as the boundedness of $b$ and $\rho$, we obtain
		\begin{align*}
			\lvert b^\rho(t,\tilde{x})\rvert^2
			&=\lvert b(t,x,\tilde{y},\rho(t,x))\rvert^2+\lvert F(t,x,\tilde{y},\rho(t,x))\rvert^2\\
			&\leq K_b^2+2\lvert F(t,x,\tilde{y},\rho(t,x))-F(0,0,0,0)\rvert^2+2F(0,0,0,0)^2\\
			&\leq K_b^2+2H_F^2\bigl(t^{1/2}+\lvert x\rvert+\lvert\tilde{y}\rvert+\lvert\rho(t,x)\rvert\bigr)^2+2F(0,0,0,0)^2\\
			&\leq12H_F^2\lvert\tilde{x}\rvert^2+6H_F^2(T+K_\rho^2)+K_b^2+2F(0,0,0,0)^2.
		\end{align*}
		This shows the linear growth condition of $b^\rho$. The boundedness of $\sigma^\rho$ follows directly from the boundedness of $\sigma$ and $\rho$, i.\,e.,
		\begin{align*}
			\lvert\sigma^\rho(t,\tilde{x})\rvert^2
			&=\lvert\sigma(t,x,\tilde{y})\rvert^2+\lvert\rho(t,x)\rvert^2
			\leq K_\sigma^2+K_\rho^2.\qedhere
		\end{align*}
	\end{proof}
	
	\begin{lem}\label{lem:Lipschitz}
		Let $\rho\in\Theta_{K_\rho,H_\rho}$. Define
		\begin{align*}
			H_1&\coloneqq(H_F+K_{u_x}H_b)\bigl(1+H_\zeta^{(y,z)}\bigr)+\frac{1}{2}K_{u_{xx}}H_{\sigma\sigma^\top},\\
			H_2&\coloneqq(H_F+K_{u_x}H_b)\bigl(1+H_\zeta^{(t,x)}\bigr)+K_{u_x}H_b(\sqrt{2}+H_\rho)+2K_bH_{u_x}+\frac{1+\sqrt{2}}{2}K_{u_{xx}}H_{\sigma\sigma^\top},\\
			H_3&\coloneqq K_{\sigma\sigma^\top}H_{u_{xx}},
		\end{align*}
		where
		\begin{align*}
			H_\zeta^{(y,z)}\coloneqq K_{\sigma^{-1}}(K_\sigma\vee(K_\rho\vee K_\vartheta)H_\sigma)
			\quad\text{and}\quad
			H_\zeta^{(t,x)}\coloneqq(K_\rho\vee K_\vartheta)(\sqrt{2}K_\sigma H_{\sigma^{-1}}+K_{\sigma^{-1}}H_\sigma).
		\end{align*}
		\begin{itemize}
			\item[(a)]
				For all $(t,\tilde{x})\in[0,T]\times\mathbb{R}^{d+1}$ and any two $(y,z),(y',z')\in\mathbb{R}\times(\mathbb{R}^d)^*$ it holds that
				\begin{align*}
					\lvert F^\rho(t,\tilde{x},y,z)-F^\rho(t,\tilde{x},y',z')\rvert\leq H_1\bigl(\lvert y-y'\rvert+\lvert z-z'\rvert\bigr),
				\end{align*}
				i.\,e., $F^\rho$ is Lipschitz in $(y,z)$ uniformly in $(t,\tilde{x})$ with constant $H_1$.
			\item[(b)]
				For all $(y,z)\in\mathbb{R}\times(\mathbb{R}^d)^*$ and any two $(t,\tilde{x}),(t',\tilde{x}')\in[0,T]\times\mathbb{R}^{d+1}$ it holds that
				\begin{align*}
					\lvert F^\rho(t,\tilde{x},y,z)-F^\rho(t',\tilde{x}',y,z)\rvert\leq H_2\bigl(\lvert t-t'\rvert^{1/2}+\lvert \tilde{x}-\tilde{x}'\rvert\bigr)+H_3\bigl(\lvert t-t'\rvert^{\alpha/2}+\lvert \tilde{x}-\tilde{x}'\rvert^\alpha\bigr).
				\end{align*}
		\end{itemize}
	\end{lem}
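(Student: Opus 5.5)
The plan is to first establish Lipschitz/Hölder properties of the auxiliary map $\zeta$, and then treat the three summands of $F^\rho$ separately. Throughout, write $\bar K\coloneqq K_\rho\vee K_\vartheta$.

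For (a), fix $(t,\tilde{x})$ and bound
\begin{align*}
\lvert\zeta(t,\tilde{x},y,z)-\zeta(t,\tilde{x},y',z')\rvert
&\leq\lvert h(z)-h(z')\rvert\,\lvert\sigma^{-1}(t,x,\tilde{y})\rvert\,\lvert\sigma(t,x,y)\rvert\\
&\quad+\lvert h(z')\rvert\,\lvert\sigma^{-1}(t,x,\tilde{y})\rvert\,\lvert\sigma(t,x,y)-\sigma(t,x,y')\rvert.
\end{align*}
The first term is controlled by the $1$-Lipschitz property of $h$ and the bounds $K_{\sigma^{-1}}$, $K_\sigma$. The second uses $\lvert h\rvert\leq\bar K$ together with $H_\sigma$. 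This yields
\begin{align*}
\lvert\zeta(t,\tilde{x},y,z)-\zeta(t,\tilde{x},y',z')\rvert\leq H_\zeta^{(y,z)}\bigl(\lvert y-y'\rvert+\lvert z-z'\rvert\bigr),
\end{align*}
where the maximum $K_\sigma\vee\bar K H_\sigma$ in $H_\zeta^{(y,z)}$ absorbs the two separate coefficients.

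We then handle the summands of $F^\rho$ in turn. The term $F(t,x,y,\zeta)$ changes by at most $H_F(\lvert y-y'\rvert+\lvert\Delta\zeta\rvert)$. The term $-u_x\,b(t,x,y,\zeta)$ changes by at most $K_{u_x}H_b(\lvert y-y'\rvert+\lvert\Delta\zeta\rvert)$, while $u_x\,b(t,x,\tilde y,\rho)$ does not depend on $(y,z)$. For the trace term we use $\lvert\mathrm{tr}[AB]\rvert\leq\lvert A\rvert\lvert B\rvert$ (Cauchy–Schwarz for the Frobenius inner product), which gives $\tfrac12K_{u_{xx}}H_{\sigma\sigma^\top}\lvert y-y'\rvert$. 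Summing the three contributions gives $H_1$.

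For (b), fix $(y,z)$ and vary $(t,\tilde{x})$. The first task is to show
\begin{align*}
\lvert\zeta(t,\tilde{x},y,z)-\zeta(t',\tilde{x}',y,z)\rvert\leq H_\zeta^{(t,x)}\bigl(\lvert t-t'\rvert^{1/2}+\lvert\tilde{x}-\tilde{x}'\rvert\bigr).
\end{align*}
To get this, split the difference as $h(z)[\sigma^{-1}-\sigma^{-1\prime}]\sigma+h(z)\sigma^{-1\prime}[\sigma-\sigma']$. The $\sigma^{-1}$ increment in $(t,x,\tilde y)$ is at most $H_{\sigma^{-1}}(\lvert t-t'\rvert^{1/2}+\sqrt2\lvert\tilde x-\tilde x'\rvert)$, which produces the factor $\sqrt2K_\sigma H_{\sigma^{-1}}$. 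The $\sigma$ increment in $(t,x)$ only, at fixed $y$, contributes $K_{\sigma^{-1}}H_\sigma$.

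With this in hand, the $F$ and $u_x b(\cdot,y,\zeta)$ pieces are treated as follows:
\begin{itemize}
\item[(i)] $F(t,x,y,\zeta)$ changes by at most $H_F\bigl(\lvert t-t'\rvert^{1/2}+\lvert x-x'\rvert+\lvert\Delta\zeta\rvert\bigr)$.
\item[(ii)] For $u_x b(t,x,y,\zeta)$, the $b$-increment is bounded like $F$, multiplied by $K_{u_x}$.
\end{itemize}
Together these give $(H_F+K_{u_x}H_b)(1+H_\zeta^{(t,x)})$.

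For the bracket $u_x[b(\tilde y,\rho)-b(y,\zeta)]$, we insert intermediate terms:
\begin{itemize}
\item[(i)] $u_x\,b(t,x,\tilde y,\rho(t,x))$ varies in its $b$-argument by at most $K_{u_x}H_b(\sqrt2+H_\rho)(\ldots)$, exactly as in Lemma~\ref{lem:Lipschitz_driver_diffusion}.
\item[(ii)] The variation of $u_x$ itself, multiplied by the bounded bracket $\lvert b-b\rvert\leq2K_b$, gives $2K_bH_{u_x}$.
\end{itemize}

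Finally, for the trace term we write
\begin{align*}
u_{xx}(t,x)D(t,x,\tilde y)-u_{xx}(t',x')D(t',x',\tilde y'),\qquad D\coloneqq(\sigma\sigma^\top)(\cdot,\tilde y)-(\sigma\sigma^\top)(\cdot,y),
\end{align*}
and split it into $(u_{xx}-u_{xx}')D+u_{xx}'(D-D')$. The increment $\lvert D-D'\rvert$ is bounded by
\begin{align*}
\lvert D-D'\rvert\leq H_{\sigma\sigma^\top}\bigl[(\lvert t-t'\rvert^{1/2}+\lvert x-x'\rvert+\lvert\tilde y-\tilde y'\rvert)+(\lvert t-t'\rvert^{1/2}+\lvert x-x'\rvert)\bigr],
\end{align*}
which after applying $\sqrt2$ gives $(1+\sqrt2)H_{\sigma\sigma^\top}$, matching the $\tfrac{1+\sqrt2}{2}K_{u_{xx}}$ term after the factor $\tfrac12$. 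For the other piece, using $\lvert D\rvert\leq2K_{\sigma\sigma^\top}$ together with the $\alpha$-Hölder property of $u_{xx}$ yields $K_{\sigma\sigma^\top}H_{u_{xx}}(\lvert t-t'\rvert^{\alpha/2}+\lvert x-x'\rvert^\alpha)$, and $\lvert x-x'\rvert\leq\lvert\tilde x-\tilde x'\rvert$ gives $H_3$.

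The main obstacle is bookkeeping rather than ideas: one must choose the intermediate insertions so that the constants come out exactly as $H_1$, $H_2$, $H_3$. This requires care in two places: the $\sqrt2$ factors from Remark (ii), and remembering that $u_x\,b(\cdot,\tilde y,\rho)$ carries the extra dependence on $\rho$ via $H_\rho$, whereas the $\zeta$-term avoids any dependence on $H_\rho$ thanks to the truncation $h$.
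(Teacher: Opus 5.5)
Your proposal is correct and follows the paper's proof essentially step by step. Both first bound the increments of $\zeta$ by splitting through $h$, $\sigma^{-1}$ and $\sigma$, and then estimate the three summands of $F^\rho$ separately with the same insertions, namely $u_x(b_\rho-b_\rho'+b'-b)+(u_x-u_x')(b_\rho'-b')$ and $(u_{xx}-u_{xx}')D+u_{xx}'(D-D')$, which yields exactly $H_1$, $H_2$ and $H_3$.
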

	
	\begin{proof}
		\begin{itemize}
			\item[(a)]
				Using the shorthand notation
				\begin{align*}
					\tilde\sigma^{-1}&\coloneqq\sigma^{-1}(t,x,\tilde{y}),&
					\sigma&\coloneqq\sigma(t,x,y),&
					\sigma'&\coloneqq\sigma(t,x,y'),
				\end{align*}	
				we obtain
				\begin{align*}
					\lvert\zeta(t,\tilde{x},y,z)-\zeta(t,\tilde{x},y',z')\rvert
					&=\lvert h(z)\tilde\sigma^{-1}\sigma-h(z')\tilde\sigma^{-1}\sigma'\rvert\\
					&=\lvert(h(z)-h(z'))\tilde\sigma^{-1}\sigma+h(z')\tilde\sigma^{-1}(\sigma-\sigma')\rvert\\
					&\leq K_\sigma K_{\sigma^{-1}}\lvert z-z'\rvert+(K_\rho\vee K_\vartheta)K_{\sigma^{-1}}H_\sigma\lvert y-y'\rvert\\
					&\leq H_\zeta^{(y,z)}\bigl(\lvert y-y'\rvert+\lvert z-z'\rvert\bigr).
				\end{align*}
				With this bound at hand, it easily follows that
				\begin{align*}
					\lvert F(t,x,y,\zeta(t,\tilde{x},y,z))-F(t,x,y',\zeta(t,\tilde{x},y',z'))\rvert
					&\leq H_F\bigl(1+H_\zeta^{(y,z)}\bigr)\bigl(\lvert y-y'\rvert+\lvert z-z'\rvert\bigr),\\
					\lvert b(t,x,y,\zeta(t,\tilde{x},y,z))-b(t,x,y',\zeta(t,\tilde{x},y',z'))\rvert
					&\leq H_b\bigl(1+H_\zeta^{(y,z)}\bigr)\bigl(\lvert y-y'\rvert+\lvert z-z'\rvert\bigr).
				\end{align*}
				Thus, using the shorthand notation
				\begin{align*}
					b&\coloneqq b(t,x,y,\zeta(t,\tilde{x},y,z)),&
					b'&\coloneqq b(t,x,y',\zeta(t,\tilde{x},y',z')),&
					b_\rho&\coloneqq b(t,x,\tilde{y},\rho(t,x)),
				\end{align*}	
				we have
				\begin{align*}
					\lvert u_x(t,x)(b_\rho-b)-u_x(t,x)(b_\rho-b')\rvert
					&=\lvert u_x(t,x)\rvert\lvert b'-b\rvert\\
					&\leq K_{u_x}H_b\bigl(1+H_\zeta^{(y,z)}\bigr)\bigl(\lvert y-y'\rvert+\lvert z-z'\rvert\bigr).
				\end{align*}
				Lastly, it holds that
				\begin{align*}
					&\begin{multlined}[t]
						\biggl\lvert\frac{1}{2}\mathrm{tr}\bigl[u_{xx}(t,x)\bigl((\sigma\sigma^\top)(t,x,\tilde{y})-(\sigma\sigma^\top)(t,x,y)\bigr)\bigr]\\
						-\frac{1}{2}\mathrm{tr}\bigl[u_{xx}(t,x)\bigl((\sigma\sigma^\top)(t,x,\tilde{y})-(\sigma\sigma^\top)(t,x,y')\bigr)\bigr]\biggr\rvert
					\end{multlined}\\
					&=\frac{1}{2}\bigl\lvert\mathrm{tr}\bigl[u_{xx}(t,x)\bigl((\sigma\sigma^\top)(t,x,y')-(\sigma\sigma^\top)(t,x,y)\bigr)\bigr]\bigr\rvert\\
					&\leq\frac{1}{2}\lvert u_{xx}(t,x)\rvert\bigl\lvert(\sigma\sigma^\top)(t,x,y')-(\sigma\sigma^\top)(t,x,y)\bigr\rvert\\
					&\leq\frac{1}{2}K_{u_{xx}}H_{\sigma\sigma^\top}\lvert y-y'\rvert.
				\end{align*}
				The statement now follows by applying the triangle inequality to the difference $\lvert F^\rho(t,\tilde{x},y,z)-F^\rho(t,\tilde{x},y',z')\rvert$ and summing up the above bounds.
			\item[(b)]
				Using the shorthand notation
				\begin{align*}
					\sigma&\coloneqq\sigma(t,x,y),&\sigma'&\coloneqq\sigma(t',x',y),&\tilde\sigma^{-1}&\coloneqq\sigma^{-1}(t,x,\tilde{y}),&(\tilde\sigma^{-1})'&\coloneqq\sigma^{-1}(t',x',\tilde{y}'),
				\end{align*}	
				we obtain
				\begin{align*}
					\lvert\zeta(t,\tilde{x},y,z)-\zeta(t',\tilde{x}',y,z)\rvert
					&=\lvert h(z)\tilde\sigma^{-1}\sigma-h(z)(\tilde\sigma^{-1})'\sigma'\rvert\\
					&=\lvert h(z)\rvert\bigl\lvert\bigl(\tilde\sigma^{-1}-(\tilde\sigma^{-1})'\bigr)\sigma+(\tilde\sigma^{-1})'(\sigma-\sigma')\bigr\rvert\\
					&\leq(K_\rho\vee K_\vartheta)(\sqrt{2}K_\sigma H_{\sigma^{-1}}+K_{\sigma^{-1}}H_\sigma)\bigl(\lvert t-t'\rvert^{1/2}+\lvert\tilde{x}-\tilde{x}'\rvert\bigr)\\
					&=H_\zeta^{(t,x)}\bigl(\lvert t-t'\rvert^{1/2}+\lvert\tilde{x}-\tilde{x}'\rvert\bigr).
				\end{align*}
				Then, we have
				\begin{align*}
					\lvert F(t,x,y,\zeta(t,\tilde{x},y,z))-F(t',x',y,\zeta(t',\tilde{x}',y,z))\rvert
					&\leq H_F\bigl(1+H_\zeta^{(t,x)}\bigr)\bigl(\lvert t-t'\rvert^{1/2}+\lvert\tilde{x}-\tilde{x}'\rvert\bigr),\\
					\lvert b(t,x,y,\zeta(t,\tilde{x},y,z))-b(t',x',y,\zeta(t',\tilde{x}',y,z))\rvert
					&\leq H_b\bigl(1+H_\zeta^{(t,x)}\bigr)\bigl(\lvert t-t'\rvert^{1/2}+\lvert\tilde{x}-\tilde{x}'\rvert\bigr).
				\end{align*}
				Furthermore, it holds that
				\begin{align*}
					&\lvert b(t,x,\tilde{y},\rho(t,x))-b(t',x',\tilde{y}',\rho(t',x'))\rvert\\
					&\leq H_b\bigl(\lvert t-t'\rvert^{1/2}+\lvert x-x'\rvert+\lvert\tilde{y}-\tilde{y}'\rvert+\lvert\rho(t,x)-\rho(t',x')\rvert\bigr)\\
					&\leq H_b(\sqrt{2}+H_\rho)\bigl(\lvert t-t'\rvert^{1/2}+\lvert\tilde{x}-\tilde{x}'\rvert\bigr).
				\end{align*}
				Now, using the shorthand notation
				\begin{align*}
					b_\rho&\coloneqq b(t,x,\tilde{y},\rho(t,x)),&b&\coloneqq b(t,x,y,\zeta(t,\tilde{x},y,z)),\\
					b_\rho'&\coloneqq b(t',x',\tilde{y}',\rho(t',x')),&b'&\coloneqq b(t',x',y,\zeta(t',\tilde{x}',y,z)),
				\end{align*}
				we obtain
				\begin{align*}
					&\lvert u_x(t,x)(b_\rho-b)-u_x(t',x')(b_\rho'-b')\rvert\\
					&=\lvert u_x(t,x)(b_\rho-b_\rho'+b'-b)+(u_x(t,x)-u_x(t',x'))(b_\rho'-b')\rvert\\
					&\leq\begin{multlined}[t]
						\bigl[K_{u_x}H_b\bigl(1+\sqrt{2}+H_\rho+H_\zeta^{(t,x)}\bigr)+2K_bH_{u_x}\bigr]\bigl(\lvert t-t'\rvert^{1/2}+\lvert\tilde{x}-\tilde{x}'\rvert\bigr).
					\end{multlined}
				\end{align*}
				Similarly, now abbreviating
				\begin{align*}
					\sigma\sigma^\top&\coloneqq(\sigma\sigma^\top)(t,x,y),&\widetilde{\sigma\sigma}^\top&\coloneqq(\sigma\sigma^\top)(t,x,\tilde y),\\
					(\sigma\sigma^\top)'&\coloneqq(\sigma\sigma^\top)(t',x',y),&(\widetilde{\sigma\sigma}^\top)'&\coloneqq(\sigma\sigma^\top)(t',x',\tilde{y}'),
				\end{align*}
				we obtain
				\begin{align*}
					&\left\lvert\frac{1}{2}\mathrm{tr}\bigl[u_{xx}(t,x)\bigl(\widetilde{\sigma\sigma}^\top-\sigma\sigma^\top\bigr)\bigr]-\frac{1}{2}\mathrm{tr}\bigl[u_{xx}(t',x')\bigl((\widetilde{\sigma\sigma}^\top)'-(\sigma\sigma^\top)'\bigr)\bigr]\right\rvert\\
					&\leq\begin{multlined}[t]
						\frac{1}{2}\bigl\lvert\mathrm{tr}\bigl[\bigl(u_{xx}(t,x)-u_{xx}(t',x')\bigr)\bigl(\widetilde{\sigma\sigma}^\top-\sigma\sigma^\top\bigr)\bigr]\bigr\rvert\\
						+\frac{1}{2}\bigl\lvert\mathrm{tr}\bigl[u_{xx}(t',x')\bigl(\widetilde{\sigma\sigma}^\top-(\widetilde{\sigma\sigma}^\top)'+(\sigma\sigma^\top)'-\sigma\sigma^\top\bigr)\bigr]\bigr\rvert
					\end{multlined}\\
					&\leq\frac{1}{2}\lvert u_{xx}(t,x)-u_{xx}(t',x')\rvert\lvert\widetilde{\sigma\sigma}^\top-\sigma\sigma^\top\rvert+\frac{1}{2}\lvert u_{xx}(t',x')\rvert\lvert\widetilde{\sigma\sigma}^\top-(\widetilde{\sigma\sigma}^\top)'+(\sigma\sigma^\top)'-\sigma\sigma^\top\rvert\\
					&\leq\begin{multlined}[t]
						K_{\sigma\sigma^\top}H_{u_{xx}}\bigl(\lvert t-t'\rvert^{\alpha/2}+\lvert\tilde{x}-\tilde{x}'\rvert^\alpha\bigr)+\frac{1}{2}K_{u_{xx}}H_{\sigma\sigma^\top}(1+\sqrt{2})\bigl(\lvert t-t'\rvert^{1/2}+\lvert\tilde{x}-\tilde{x}'\rvert\bigr).
					\end{multlined}
				\end{align*}				
				The statement now follows by applying the triangle inequality to the difference $\lvert F^\rho(t,\tilde{x},y,z)-F^\rho(t',\tilde{x}',y,z)\rvert$ and summing up the above bounds.\qedhere
		\end{itemize}
	\end{proof}
	
	\begin{proof}[Proof of Theorem~\ref{theorem:auxFBSDE-sol}]
		Since $b^\rho$ and $\sigma^\rho$ are uniformly Lipschitz in $\tilde{x}$ (Lemma~\ref{lem:Lipschitz_driver_diffusion}) and fulfill linear growth conditions in $\tilde{x}$ (Lemma~\ref{lem:linear_growth}, for $\sigma^\rho$ the boundedness implies the linear growth condition), the forward part of FBSDE~\eqref{eq:FBSDE-rho} has a unique solution $\tilde{X}^\eta$ \cite[Theorem~3.1 of Chapter~2]{Mao_2011}. In particular, $\tilde{X}^\eta$ is continuous and $(\mathcal{F}_t)_{t\geq0}$-adapted and consequently also $(\mathcal{F}_t)_{t\geq0}$-progressively measurable. Moreover, by \cite[Lemma~3.2 of Chapter~2]{Mao_2011} in connection with Lemma~\ref{lem:linear_growth}, it holds that $\mathbb{E}\bigl[\sup_{t\in[0,T]}\lvert\tilde{X}^\eta_t\rvert^2\bigr]<\infty$. Thus, also the backward part admits a unique solution \cite[Theorem~4.2 of Chapter~1]{Ma_Yong_2007} since $F^\rho$, considered as a function
		\begin{align*}
			F^\rho\colon[0,T]\times\Omega\times\mathbb{R}\times(\mathbb{R}^d)^*\to\mathbb{R},\quad
			F^\rho(t,\omega,y,z)=F^\rho(t,\tilde{X}^\eta_t(\omega),y,z),
		\end{align*}
		is Lipschitz in $(y,z)$ uniformly in $(t,\omega)$ by Lemma~\ref{lem:Lipschitz}(a) and satisfies
		\begin{align}\label{eq:int_cond_X_eta_tilde}
			&\mathbb{E}\left[\lvert g(\tilde{X}^\eta_T)\rvert^2+\int_0^T\lvert F^\rho(t,\tilde{X}^\eta_t,0,0)\rvert^2\,\mathrm{d}t\right]\nonumber\\
			&\leq\begin{multlined}[t]
				2H_g^2\mathbb{E}\left[\lvert\tilde{X}^\eta_T\rvert^2\right]+2H_g^2g(0)^2+2TF^\rho(0,0,0,0)^2\\
				+8\mathbb{E}\left[\int_0^T H_2^2t+H_2^2\lvert\tilde{X}^\eta_t\rvert^2+H_3^2t^\alpha+H_3^2\lvert\tilde{X}^\eta_t\rvert^{2\alpha}\,\mathrm{d}t\right]
			\end{multlined}\nonumber\\
			&<\infty
		\end{align}
		by the Lipschitz continuity of $g$, Lemma~\ref{lem:Lipschitz}(b) and the integrability properties of $\tilde{X}^\eta$.
		
		Now, fix some $t\in[0,T]$ and apply Itô's formula to $u(t,X^\eta_t)$. Additionally, using the identities for $u_t(x,X_s)$ and $u(T,X_T)$ given by the parabolic PDE~\eqref{eq:PDE}	and noting that
		\begin{align*}
			v(s,X^\eta_s)
			&=u_x(s,X^\eta_s)\sigma(s,X^\eta_s,u(s,X^\eta_s))\\
			&=h(\vartheta(s,X^\eta_s,\tilde{Y}^\eta_s))\sigma^{-1}(s,X^\eta_s,\tilde{Y}^\eta_s)\sigma(s,X^\eta_s,u(s,X^\eta_s))\\
			&=h(Z^\eta_s)\sigma^{-1}(s,X^\eta_s,\tilde{Y}^\eta_s)\sigma(s,X^\eta_s,Y^\eta_s)\\
			&=\zeta(s,\tilde{X}^\eta_s,Y^\eta_s,Z^\eta_s),
		\end{align*}
		since $\vartheta(s,X^\eta_s,\tilde{Y}^\eta_s)$ is bounded by $K_\vartheta\leq K_\rho\vee K_\vartheta$, it follows that
		\begin{align*}
			Y^\eta_t
			&=\begin{multlined}[t][0.7\displaywidth]
				u(T,X^\eta_T)-\int_t^Tu_s(s,X^\eta_s)+u_{x}(s,X^\eta_s)b(s,X^\eta_s,\tilde{Y}^\eta_s,\rho(s,X^\eta_s))\\
				+\frac{1}{2}\mathrm{tr}\bigl[u_{xx}(s,X^\eta_s)(\sigma\sigma^\top)(s,X^\eta_s,\tilde{Y}^\eta_s)\bigr]\,\mathrm{d}s-\int_t^Tu_{x}(s,X^\eta_s)\sigma(s,X^\eta_s,\tilde{Y}^\eta_s)\,\mathrm{d}W_s
			\end{multlined}\\
			&=\begin{multlined}[t]
				g(X^\eta_T)-\int_t^TF(s,X^\eta_s,u(s,X^\eta_s),v(s,X^\eta_s))\\
				+u_x(s,X^\eta_s)\bigl[b(s,X^\eta_s,\tilde{Y}^\eta_s,\rho(s,X^\eta_s))-b(s,X^\eta_s,u(s,X^\eta_s),v(s,X^\eta_s))\bigr]\\
				+\frac{1}{2}\mathrm{tr}\bigl[u_{xx}(s,X^\eta_s)\bigl((\sigma\sigma^\top)(s,X^\eta_s,\tilde{Y}^\eta_s)-(\sigma\sigma^\top)(s,X^\eta_s,u(s,X^\eta_s)\bigr)\bigr]\,\mathrm{d}s-\int_t^TZ^\eta_s\,\mathrm{d}W_s
			\end{multlined}\\
			&=\begin{multlined}[t]
				g(X^\eta_T)-\int_t^TF(s,X^\eta_s,Y^\eta_s,\zeta(s,\tilde{X}^\eta_s,Y^\eta_s,Z^\eta_s))\\
				+u(s,X^\eta_s)\bigl[b(s,X^\eta_s,\tilde{Y}^\eta_s,\rho(s,X^\eta_s))-b(s,X^\eta_s,Y^\eta_s,\zeta(s,\tilde{X}^\eta_s,Y^\eta_s,Z^\eta_s))\bigr]\\
				+\frac{1}{2}\mathrm{tr}\bigl[u_{xx}(s,X^\eta_s)\bigl((\sigma\sigma^\top)(s,X^\eta_s,\tilde{Y}^\eta_s)-(\sigma\sigma^\top)(s,X^\eta_s,Y^\eta_s\bigr)\bigr]\,\mathrm{d}s-\int_t^TZ^\eta_s\,\mathrm{d}W_s
			\end{multlined}\\
			&=g(X^\eta_T)-\int_t^TF^\rho(s,\tilde{X}^\eta_s,Y^\eta_s,Z^\eta_s)\,\mathrm{d}s-\int_t^TZ^\eta_s\,\mathrm{d}W_s.
		\end{align*}
		That is $Y^\eta$ and $Z^\eta$ fulfill the backward SDE of FBSDE~\eqref{eq:FBSDE-rho}. Moreover, since $u$ and $\vartheta$ fulfill Hölder and Lipschitz conditions and are consequently continuous, the processes $Y^\eta$ and $Z^\eta$ are $(\mathcal{F}_t)_{t\geq0}$-progressively measurable as functions of $\tilde{X}^\eta$. Also $Y^\eta$ is continuous. Lastly, the necessary integrability conditions are fulfilled. Indeed, for $Y^\eta$, we obtain from the integrability properties of $\tilde{X}^\eta$ that
		\begin{align*}
			\mathbb{E}\biggl[\sup_{t\in[0,T]}\lvert Y^\eta_t\rvert^2\biggr]
			\leq4H_u^2T+4H_u^2\mathbb{E}\biggl[\sup_{t\in[0,T]}\lvert X^\eta_t\rvert^2\biggr]+2u(0,0)^2
			<\infty.
		\end{align*}	For $Z^\eta$, the required integrability condition follows directly from the boundedness of $\vartheta$.
	\end{proof}
	
	\begin{lem}\label{lem:supremum-Xtilde}
		It holds that
		\begin{align*}
			\mathbb{E}\biggl[\sup_{t\in[0,T]}\lvert\tilde{X}^\eta_t\rvert^2\biggr]\leq(1+3\lvert\tilde{x}_0\rvert^2)\mathrm{e}^{3(L_{b^\rho}^2\vee K_{\sigma^\rho}^2)T(T+4)}\eqqcolon c_6.
		\end{align*}
	\end{lem}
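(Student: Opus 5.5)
This is the standard second-moment estimate for an SDE with linear growth coefficients, in the spirit of \cite[Lemma~3.2 of Chapter~2]{Mao_2011}. I will redo it to obtain the explicit constant, relying on the bounds of Lemma~\ref{lem:linear_growth}, namely $\lvert b^\rho\rvert^2\leq L_{b^\rho}^2(1+\lvert\tilde x\rvert^2)$ and $\lvert\sigma^\rho\rvert^2\leq K_{\sigma^\rho}^2\leq K_{\sigma^\rho}^2(1+\lvert\tilde x\rvert^2)$. Write $L\coloneqq L_{b^\rho}^2\vee K_{\sigma^\rho}^2$.

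The argument proceeds in four steps.

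\textbf{Step 1 (localization).} To ensure all quantities are finite a priori, introduce the stopping times $\tau_n\coloneqq\inf\{t\colon\lvert\tilde X^\eta_t\rvert\geq n\}\wedge T$. Alternatively, one can use directly that $\mathbb{E}[\sup_t\lvert\tilde X^\eta_t\rvert^2]<\infty$, which was already established in the proof of Theorem~\ref{theorem:auxFBSDE-sol}. In that case localization is not needed.

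\textbf{Step 2 (pathwise split).} For $t\in[0,T]$,
\begin{align*}
\sup_{s\leq t}\lvert\tilde X^\eta_s\rvert^2\leq 3\lvert\tilde x_0\rvert^2+3\sup_{s\leq t}\Bigl\lvert\int_0^sb^\rho(r,\tilde X^\eta_r)\,\mathrm{d}r\Bigr\rvert^2+3\sup_{s\leq t}\Bigl\lvert\int_0^s\sigma^\rho(r,\tilde X^\eta_r)\,\mathrm{d}W_r\Bigr\rvert^2 .
\end{align*}

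\textbf{Step 3 (bounding the two integrals).} For the drift term, Cauchy--Schwarz gives a bound $T\int_0^t\mathbb{E}\lvert b^\rho\rvert^2\,\mathrm{d}r$. For the stochastic integral, Doob's $L^2$ inequality gives $4\int_0^t\mathbb{E}\lvert\sigma^\rho\rvert^2\,\mathrm{d}r$. Combining both with the growth bounds, the function $\phi(t)\coloneqq\mathbb{E}[\sup_{s\leq t}\lvert\tilde X^\eta_s\rvert^2]$ satisfies
\begin{align*}
\phi(t)\leq 3\lvert\tilde x_0\rvert^2+3L(T+4)\int_0^t\bigl(1+\phi(r)\bigr)\,\mathrm{d}r .
\end{align*}

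\textbf{Step 4 (Gronwall).} Set $\psi\coloneqq1+\phi$. Then
\begin{align*}
\psi(t)\leq 1+3\lvert\tilde x_0\rvert^2+3L(T+4)\int_0^t\psi(r)\,\mathrm{d}r .
\end{align*}
Gronwall's lemma yields $\psi(T)\leq(1+3\lvert\tilde x_0\rvert^2)\mathrm{e}^{3L(T+4)T}$, and hence $\phi(T)\leq c_6$.

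The only care needed is the bookkeeping of constants. The factor $3$ from splitting the square into three terms must multiply both the initial term and the integral term, and the diffusion bound must be written in the form $K_{\sigma^\rho}^2(1+\phi)$ so that the drift and diffusion contributions can be merged under the common constant $L$. Doob's inequality applies because the stochastic integral is a square-integrable martingale, which holds since $\sigma^\rho$ is bounded. Gronwall requires $\phi$ to be finite and measurable, which follows from Step 1. I therefore expect no real obstacle beyond this bookkeeping.
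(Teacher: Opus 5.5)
Your proposal is correct and is essentially the paper's proof. The paper only cites \cite[Lemma~3.2 of Chapter~2]{Mao_2011} together with Lemma~\ref{lem:linear_growth}, and Mao's proof is exactly your argument: localization, the three-term split, Cauchy--Schwarz for the drift and Doob's inequality with factor $4$ for the stochastic integral, then Gronwall applied to $1+\phi$, giving the same constant $c_6$ with $K=L_{b^\rho}^2\vee K_{\sigma^\rho}^2$. Note that the finiteness of $\mathbb{E}[\sup_t\lvert\tilde X^\eta_t\rvert^2]$ in the proof of Theorem~\ref{theorem:auxFBSDE-sol} was itself obtained from that same lemma, so the stopping-time route in your Step~1 is the one that makes your argument self-contained.
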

	\begin{proof}
		 Follows from \cite[Lemma~3.2 of Chapter~2]{Mao_2011} in connection with Lemma~\ref{lem:linear_growth}.
	\end{proof}
	
	\begin{lem}\label{lem:supremum_Xeta}
		Let
		\begin{align*}
			c_{11}&\coloneqq2(K_b^2\lvert\pi\rvert+K_\sigma^2).
		\end{align*}
		It holds that
		\begin{align*}
			\sup_{t\in[0,T]}\mathbb{E}\bigl[\lvert X^\eta_t-X^\eta_{\Pi(t)}\rvert^2\bigr]
			\leq c_{11}\lvert\pi\rvert.
		\end{align*}
	\end{lem}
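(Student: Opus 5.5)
Fix $t\in[0,T]$ and write $\Pi(t)=t_i$, so that $t-\Pi(t)\le\Delta_i\le\lvert\pi\rvert$. The $X$-component of the auxiliary forward SDE in \eqref{eq:FBSDE-rho} reads
\begin{align*}
	X^\eta_t-X^\eta_{\Pi(t)}=\int_{\Pi(t)}^t b\bigl(s,X^\eta_s,\tilde{Y}^\eta_s,\rho(s,X^\eta_s)\bigr)\,\mathrm{d}s+\int_{\Pi(t)}^t\sigma\bigl(s,X^\eta_s,\tilde{Y}^\eta_s\bigr)\,\mathrm{d}W_s,
\end{align*}
i.\,e., it consists of the first $d$ components of $b^\rho$ and $\sigma^\rho$. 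We estimate the square of this increment with $(a+b)^2\le 2a^2+2b^2$, which produces the factor $2$ in $c_{11}$.

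For the drift term, we use the boundedness of $b$ by $K_b$ from Assumption~\ref{ass:functions}(iii), which holds regardless of the arguments. This gives
\begin{align*}
	\Bigl\lvert\int_{\Pi(t)}^t b(\ldots)\,\mathrm{d}s\Bigr\rvert^2\le K_b^2(t-\Pi(t))^2\le K_b^2\lvert\pi\rvert^2.
\end{align*}

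For the stochastic integral, we apply the Itô isometry (Frobenius norm) together with the bound $\lvert\sigma\rvert\le K_\sigma$. This yields
\begin{align*}
	\mathbb{E}\Bigl[\Bigl\lvert\int_{\Pi(t)}^t\sigma(\ldots)\,\mathrm{d}W_s\Bigr\rvert^2\Bigr]=\mathbb{E}\Bigl[\int_{\Pi(t)}^t\lvert\sigma(\ldots)\rvert^2\,\mathrm{d}s\Bigr]\le K_\sigma^2\lvert\pi\rvert.
\end{align*}
The isometry is applicable because the integrand is progressively measurable and bounded; progressive measurability follows since $\tilde{X}^\eta$ is continuous and adapted by Theorem~\ref{theorem:auxFBSDE-sol}.

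Adding the two estimates gives
\begin{align*}
	\mathbb{E}\bigl[\lvert X^\eta_t-X^\eta_{\Pi(t)}\rvert^2\bigr]\le 2\bigl(K_b^2\lvert\pi\rvert^2+K_\sigma^2\lvert\pi\rvert\bigr)=c_{11}\lvert\pi\rvert.
\end{align*}
Since this bound does not depend on $t$, taking the supremum over $t\in[0,T]$ proves the statement. At $t=T$ and at grid points the increment vanishes, so these cases are trivially included.

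No step here is a real obstacle. The only point needing care is that we use the boundedness of $b$ and $\sigma$ themselves, not the linear growth bound of $b^\rho$ from Lemma~\ref{lem:linear_growth}. This is exactly why the resulting constant is independent of $\tilde{x}_0$, $K_\rho$ and $H_\rho$.
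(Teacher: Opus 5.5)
Your proposal is correct and follows the paper's proof: split the increment with $(a+b)^2\le 2a^2+2b^2$, bound the drift integral by $K_b^2(t-\Pi(t))^2$ using the boundedness of $b$, and bound the stochastic integral by $K_\sigma^2(t-\Pi(t))$ via the It\^o isometry. Your remark that using the bounds on $b$ and $\sigma$ themselves, rather than Lemma~\ref{lem:linear_growth}, makes $c_{11}$ independent of $K_\rho$ and $H_\rho$ also agrees with Appendix~\ref{appendix_B}.
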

	\begin{proof}
		Let $t\in[0,T]$. Inserting the definitions of $X^\eta_t$ and $X^\eta_{\Pi(t)}$ and applying Itô's isometry we obtain
		\begin{align*}
			\mathbb{E}\bigl[\lvert X^\eta_t-X^\eta_{\Pi(t)}\rvert^2\bigr]
			&\leq2\mathbb{E}\Biggl[\biggl\lvert\int_{\Pi(t)}^tb(s,X^\eta_s,\tilde{Y}^\eta_s,\rho(s,X^\eta_s))\,\mathrm{d}s\biggr\rvert^2\Biggr]+2\mathbb{E}\Biggl[\biggl\lvert\int_{\Pi(t)}^t\sigma(s,X^\eta_s,\tilde{Y}^\eta_s)\,\mathrm{d}W_s\biggr\rvert^2\Biggr]\\
			&\leq2(t-\Pi(t))^2K_b^2+2(t-\Pi(t))K_\sigma^2\\
			&\leq2(K_b^2\lvert\pi\rvert+K_\sigma^2)\lvert\pi\rvert.\qedhere
		\end{align*}
	\end{proof}
	
	\subsection{A-Posteriori Error of the Auxiliary FBSDE}
	At first, we derive an upper bound for the $L^2$-approximation error between $(\tilde{X}^\eta,Y^\eta,Z^\eta)$ and $(\tilde{X}^{\eta,\pi},Y^{\eta,\pi},Z^{\eta,\pi})$, that is
	\begin{align*}
		\mathcal{E}^\eta_\pi\coloneqq\sup_{t\in[0,T]}\mathbb{E}\bigl[\lvert\tilde{X}^\eta_t-\tilde{X}^{\eta,\pi}_t\rvert^2\bigr]+\max_{t_i\in\pi}\mathbb{E}\bigl[\lvert Y^\eta_{t_i}-Y^{\eta,\pi}_{t_i}\rvert^2\bigr]+\mathbb{E}\biggl[\int_0^T\lvert Y^\eta_t-Y^{\eta,\pi}_t\rvert^2+\lvert Z^\eta_t-Z^{\eta,\pi}_t\rvert^2\,\mathrm{d}t\biggr],
	\end{align*}
	which will rely on the a-posteriori estimates for BSDEs in \cite{Bender_Steiner_2013}. Additionally, we will use the standard result that, under the usual Lipschitz and linear growth conditions, the Euler-Maruyama scheme converges strongly with order $\frac{1}{2}$; see, e.\,g., \cite[Theorem~10.2.2]{Kloeden_Platen_1992}. To be able to indicate the dependence on $K_\rho$ and $H_\rho$ we give a self-contained proof of this result in Appendix~\ref{appendix_A}.
	
	\begin{lem}\label{lem:Euler_Scheme}
		Define
		\begin{align*}
			c_3&\coloneqq4\bigl(2T(TH_{b^\rho}^2+H_{\sigma^\rho}^2)+L_{b^\rho}^2\lvert\pi\rvert(1+c_6)+K_{\sigma^\rho}^2\bigr)\mathrm{e}^{8T(TH_{b^\rho}^2+H_{\sigma^\rho}^2)}.
		\end{align*}
		It holds that
		\begin{align*}
			\sup_{t\in[0,T]}\mathbb{E}\bigl[\lvert\tilde{X}^\eta_t-\tilde{X}^{\eta,\pi}_t\rvert^2\bigr]\leq c_3\lvert\pi\rvert.
		\end{align*}
	\end{lem}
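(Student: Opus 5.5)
The plan is the standard strong-convergence argument for the Euler--Maruyama scheme applied to the decoupled forward SDE for $\tilde X^\eta$, with coefficients $b^\rho,\sigma^\rho$. The constants are tracked via Lemmas~\ref{lem:Lipschitz_driver_diffusion}, \ref{lem:linear_growth} and \ref{lem:supremum-Xtilde}.

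First, I write the scheme~\eqref{eq:Euler_extented} in continuous-time integral form. For $t\in[0,T]$ one has $\tilde X^{\eta,\pi}_t=\tilde x_0+\int_0^{\Pi(t)}b^\rho(\Pi(s),\tilde X^{\eta,\pi}_{\Pi(s)})\,\mathrm ds+\int_0^{\Pi(t)}\sigma^\rho(\Pi(s),\tilde X^{\eta,\pi}_{\Pi(s)})\,\mathrm dW_s$. It is convenient to also introduce the continuous interpolation $\bar X_t$, defined by the same integrals with upper limit $t$, so that $\bar X_{t_i}=\tilde X^{\eta,\pi}_{t_i}$ and $\tilde X^{\eta,\pi}_{s}=\bar X_{\Pi(s)}$. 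I then split
\begin{align*}
\tilde X^\eta_t-\tilde X^{\eta,\pi}_t=(\tilde X^\eta_t-\tilde X^\eta_{\Pi(t)})+(\tilde X^\eta_{\Pi(t)}-\bar X_{\Pi(t)}).
\end{align*}
The first term is a one-step increment of the exact solution. Using Itô's isometry together with the growth bound of Lemma~\ref{lem:linear_growth} and the moment bound $c_6$ of Lemma~\ref{lem:supremum-Xtilde}, its second moment is at most roughly $2(L_{b^\rho}^2|\pi|(1+c_6)+K_{\sigma^\rho}^2)|\pi|$. This is the origin of the corresponding terms in $c_3$.

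For the second term, I set $e(r):=\sup_{s\le r}\mathbb E[|\tilde X^\eta_{\Pi(s)}-\bar X_{\Pi(s)}|^2]$. Subtracting the integral equations and applying $(a+b)^2\le2a^2+2b^2$, Cauchy--Schwarz on the $\mathrm ds$-integral and Itô's isometry on the stochastic integral gives
\begin{align*}
\mathbb E\bigl[|\tilde X^\eta_{\Pi(t)}-\bar X_{\Pi(t)}|^2\bigr]\le 2(T+1)\int_0^{\Pi(t)}\mathbb E\bigl[|b^\rho(s,\tilde X^\eta_s)-b^\rho(\Pi(s),\bar X_{\Pi(s)})|^2\bigr]\cdots\,\mathrm ds ,
\end{align*}
where the dots indicate the analogous $\sigma^\rho$-term, weighted more precisely by $T$ for the drift and by $1$ for the diffusion. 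By Lemma~\ref{lem:Lipschitz_driver_diffusion}, each integrand is bounded by $2H^2\bigl(|s-\Pi(s)|+|\tilde X^\eta_s-\bar X_{\Pi(s)}|^2\bigr)$. I split $|\tilde X^\eta_s-\bar X_{\Pi(s)}|^2\le2|\tilde X^\eta_s-\tilde X^\eta_{\Pi(s)}|^2+2|\tilde X^\eta_{\Pi(s)}-\bar X_{\Pi(s)}|^2$ and bound the first piece by the one-step estimate above.

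This yields an inequality of the form $e(t)\le A|\pi|+8(TH_{b^\rho}^2+H_{\sigma^\rho}^2)\int_0^t e(s)\,\mathrm ds$, where $A$ is built from $2T(TH_{b^\rho}^2+H_{\sigma^\rho}^2)$ and the increment bound. Gronwall's lemma then gives $e(T)\le A|\pi|\,\mathrm e^{8T(TH_{b^\rho}^2+H_{\sigma^\rho}^2)}$. Combining this with the first term and collecting constants, with the factor $4$ arising from the two splittings, gives $c_3|\pi|$.

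The main obstacle is measurability and finiteness of $e$, which is needed for Gronwall. This follows from the square-integrability of $\bar X$, obtained by a discrete induction using the linear growth of $b^\rho$ and the boundedness of $\sigma^\rho$. The remaining difficulty is bookkeeping the numerical constants so that they match $c_3$ exactly. I would do this carefully in the appendix rather than aim for sharpness.
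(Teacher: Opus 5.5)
Your strategy is right, and your split $\tilde X^\eta_t-\tilde X^{\eta,\pi}_t=(\tilde X^\eta_t-\tilde X^\eta_{\Pi(t)})+(\tilde X^\eta_{\Pi(t)}-\bar X_{\Pi(t)})$ is a regrouping of the paper's four terms: the one-step increment collects (II) and (IV), and the grid error collects (I) and (III). The gap is your final claim that the constants collect to $c_3$. With the bookkeeping you describe, they do not.

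Write $k\coloneqq TH_{b^\rho}^2+H_{\sigma^\rho}^2$ and $M\coloneqq L_{b^\rho}^2\lvert\pi\rvert(1+c_6)+K_{\sigma^\rho}^2$. Then your increment bound is $2M\lvert\pi\rvert$ and $c_3=4(2Tk+M)\mathrm{e}^{8Tk}$. Your extra split $\lvert\tilde X^\eta_s-\bar X_{\Pi(s)}\rvert^2\le2\lvert\tilde X^\eta_s-\tilde X^\eta_{\Pi(s)}\rvert^2+2\lvert\tilde X^\eta_{\Pi(s)}-\bar X_{\Pi(s)}\rvert^2$ feeds the increment bound into the Gronwall source term. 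You get $e(t)\le4Tk(1+4M)\lvert\pi\rvert+8k\int_0^te(s)\,\mathrm{d}s$, and hence the final bound $\bigl(4M+8Tk(1+4M)\mathrm{e}^{8Tk}\bigr)\lvert\pi\rvert$. This exceeds $c_3\lvert\pi\rvert$ by $4M\bigl(1-\mathrm{e}^{8Tk}+8Tk\,\mathrm{e}^{8Tk}\bigr)\lvert\pi\rvert>0$: the bracket vanishes at $Tk=0$ and has derivative $x\mathrm{e}^x>0$ in $x=8Tk$. Careful bookkeeping along this route would therefore expose a mismatch rather than recover the stated constant.

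The fix is to omit that extra split. Since $\bar X_{\Pi(s)}=\tilde X^{\eta,\pi}_s$, the Lipschitz estimate already produces the full error $\tilde X^\eta_s-\tilde X^{\eta,\pi}_s$. So run Gronwall on $\phi(t)\coloneqq\mathbb{E}\bigl[\lvert\tilde X^\eta_t-\tilde X^{\eta,\pi}_t\rvert^2\bigr]$ for all $t\in[0,T]$, not on the grid error. This gives $\phi(t)\le2\cdot2M\lvert\pi\rvert+2\cdot4k\bigl(T\lvert\pi\rvert+\int_0^t\phi(s)\,\mathrm{d}s\bigr)=4(2Tk+M)\lvert\pi\rvert+8k\int_0^t\phi(s)\,\mathrm{d}s$. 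That is exactly the inequality in Appendix~\ref{appendix_A}, and Gronwall yields $c_3$.

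Your larger constant is still of the form $C\mathrm{e}^{C(K_\rho^2+H_\rho^2)}$, so Theorem~\ref{theorem:main} would be unaffected. The lemma with the explicit $c_3$, however, would not be proved.

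Your point about checking finiteness before invoking Gronwall is one the paper leaves implicit. On the direct route you need $\sup_t\phi(t)<\infty$. This follows from $c_6$ and the square-integrability of the Euler iterates, as you indicate.
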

	\begin{proof}
		See Appendix~\ref{appendix_A}.
	\end{proof}

	\begin{theorem}\label{theorem:estimate_rho_approx}
		Define
		\begin{align*}
			c_1&\coloneqq2(2T+1)(d_1+dc_7),\\
			c_4&\coloneqq\begin{multlined}[t]
				2d_2\bigl(H_g^2c_6+g(0)^2\bigr)+2(2T+1)\bigl(d_4H_g^2c_3+4d_5H_2^2(1+c_3)T\bigr)
				\\
				+4Td_3\bigl(H_F^2(T+2c_6)+F(0,0,0,0)^2+4K_{u_x}^2K_b^2+K_{u_{xx}}^2K_{\sigma\sigma^\top}^2\bigr),
			\end{multlined}\\
			\tilde{c}_4&\coloneqq8(2T+1)d_5H_3^2(1+c_3^\alpha)T,\\
			\Gamma&\coloneqq4H_1^2(T+1)(d\vee2)+16TH_1^4(1+T)^2(d\vee2)^2,
		\end{align*}
		where
		\begin{align*}
			c_7&\coloneqq\left(\frac{6}{(d\vee2)-1}+2+16\bigl(2+4(1+T)TH_1^2(d\vee2)\bigr)\right)\mathrm{e}^{\Gamma T},\\
			d_1&\coloneqq\left(\frac{d_4}{4}+2\right)(2+d)\bigl[2+c_7\bigl(1+H_1^2T(T+d)\bigr)\bigr],\\
			d_2&\coloneqq2H_1(2T+1)\tilde{d}_2+8(1+H_1^2T)\mathrm{e}^{(1+H_1)^2T}+16H_1^2\mathrm{e}^{(1+H_1)^2T}(T+2+2H_1^2T)\lvert\pi\rvert,\\
			d_3&\coloneqq2H_1(2T+1)\tilde{d}_3+8(1+H_1^2T)\mathrm{e}^{(1+H_1)^2T}+\bigl(16H_1^2\mathrm{e}^{(1+H_1)^2T}(3+2H_1^2T)+8\bigr)\lvert\pi\rvert,\\
			d_4&\coloneqq(1+2(1+2\beta_\pi)T)\mathrm{e}^{\alpha_\pi T}+4+4\beta_\pi\lvert\pi\rvert,\\
			d_5&\coloneqq\frac{d_4}{4H_1},\\
			\tilde{d}_2&\coloneqq\frac{d_4}{4H_1}\Bigl[(4+4H_1^2)\mathrm{e}^{(1+H_1)^2T}+8H_1^2\mathrm{e}^{(1+H_1)^2T}(T+2+2H_1^2T)\lvert\pi\rvert\Bigr],\\
			\tilde{d}_3&\coloneqq\frac{d_4}{4H_1}\Bigl[4(1+H_1^2T)\mathrm{e}^{(1+H_1)^2T}+\bigl(8H_1^2\mathrm{e}^{(1+H_1)^2T}(3+2H_1^2T)+4\bigr)\lvert\pi\rvert\Bigr],\\
			\alpha_\pi&\coloneqq\beta_\pi\left(1+\frac{1}{2(1-\lvert\pi\rvert/2)^2}\lvert\pi\rvert\right)+\frac{1}{2(1-\lvert\pi\rvert/2)^2},\\
			\beta_\pi&\coloneqq(4H_1+16H_1^2)\mathrm{e}^{(4H_1+16H_1^2)\lvert\pi\rvert}.
		\end{align*}
		For $\lvert\pi\rvert\leq\min\bigl\{1,(8H_1(1+4H_1))^{-1},\Gamma^{-1}\bigr\}$ it holds that
		\begin{align*}
			&\max_{t_i\in\pi}\mathbb{E}\bigl[\lvert Y^\eta_{t_i}-Y^{\eta,\pi}_{t_i}\rvert^2\bigr]+\mathbb{E}\left[\int_0^T\lvert Y^\eta_t-Y^{\eta,\pi}_t\rvert^2+\lvert Z^\eta_t-Z^{\eta,\pi}_t\rvert^2\,\mathrm{d}t\right]\\
			&\leq c_1\mathbb{E}\bigl[\lvert g(X^{\eta,\pi}_T)-Y^{\eta,\pi}_T\rvert^2\bigr]+c_4\lvert\pi\rvert+\tilde{c}_4\lvert\pi\rvert^\alpha.
		\end{align*}
	\end{theorem}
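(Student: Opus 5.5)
We reduce the statement to the a-posteriori estimate for decoupled FBSDEs of \cite{Bender_Steiner_2013}. This is possible because, by the discussion preceding Theorem~\ref{theorem:auxFBSDE-sol}, the triple $(\tilde{X}^{\eta,\pi},Y^{\eta,\pi},Z^{\eta,\pi})$ is exactly the deep BSDE (forward Euler) approximation of the decoupled FBSDE~\eqref{eq:FBSDE-rho} with forward coefficients $b^\rho,\sigma^\rho$ and generator $F^\rho$. By \eqref{eq:F_rho_for_approximation_triple}, the $Y$-recursion reads
\begin{align*}
	Y^{\eta,\pi}_{t_{i+1}}=Y^{\eta,\pi}_{t_i}+F^\rho(t_i,\tilde{X}^{\eta,\pi}_{t_i},Y^{\eta,\pi}_{t_i},Z^{\eta,\pi}_{t_i})\Delta_i+Z^{\eta,\pi}_{t_i}\Delta W_i .
\end{align*}

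\textbf{Step 1: discrete BSDE and a-posteriori bound.} We introduce the discrete-time backward scheme (implicit or explicit Euler with $L^2$-projections) driven by the true Euler forward process $\tilde{X}^{\eta,\pi}$ with terminal value $g(X^{\eta,\pi}_T)$. Call its solution $(\bar Y^\pi,\bar Z^\pi)$. The core of \cite{Bender_Steiner_2013} is a discrete stability estimate. It compares two processes satisfying the same forward recursion with generator Lipschitz constant $H_1$ (Lemma~\ref{lem:Lipschitz}(a)) and bounds
\begin{align*}
	\max_i\mathbb{E}\bigl[\lvert\bar Y^\pi_{t_i}-Y^{\eta,\pi}_{t_i}\rvert^2\bigr]+\sum_i\mathbb{E}\bigl[\lvert\bar Z^\pi_{t_i}-Z^{\eta,\pi}_{t_i}\rvert^2\bigr]\Delta_i
\end{align*}
by a constant times $\mathbb{E}[\lvert g(X^{\eta,\pi}_T)-Y^{\eta,\pi}_T\rvert^2]$. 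I would prove this by a discrete Gronwall argument on $\mathbb{E}\lvert\delta Y_{t_i}\rvert^2$. The $Z$ part uses the martingale representation of $\Delta W_i$ and the orthogonality of the projection error. The smallness conditions $\lvert\pi\rvert\le(8H_1(1+4H_1))^{-1}$ and $\lvert\pi\rvert\le\Gamma^{-1}$ serve to absorb the $H_1\Delta_i$ terms, and they produce $\alpha_\pi,\beta_\pi,\Gamma,c_7$. This yields the $c_1$ term.

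\textbf{Step 2: time-discretization error of the backward scheme.} We compare $(\bar Y^\pi,\bar Z^\pi)$ with $(Y^\eta,Z^\eta)$. We use the standard BSDE discretization analysis (Zhang's $L^2$-regularity argument). The key ingredients are as follows.
\begin{itemize}
	\item[(i)] The forward Euler error $c_3\lvert\pi\rvert$ from Lemma~\ref{lem:Euler_Scheme}, which enters through $H_g$ in the terminal condition and through $H_2,H_3$ in the generator (Lemma~\ref{lem:Lipschitz}(b)). The H\"older part contributes $\mathbb{E}\lvert\tilde{X}^\eta-\tilde{X}^{\eta,\pi}\rvert^{2\alpha}\le(c_3\lvert\pi\rvert)^\alpha$ by Jensen, which gives $\tilde{c}_4\lvert\pi\rvert^\alpha$.
	\item[(ii)] The $L^2$-regularity of $Z^\eta$. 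Here we exploit the explicit representation $Z^\eta_t=\vartheta(t,X^\eta_t,\tilde{Y}^\eta_t)$ from Theorem~\ref{theorem:auxFBSDE-sol}. Together with Remark~\ref{rem:HLC_uxsigma}, Lemma~\ref{lem:supremum_Xeta} and the analogous increment bound for $\tilde Y^\eta$, it gives $\sum_i\int_{t_i}^{t_{i+1}}\mathbb{E}\lvert Z^\eta_t-Z^\eta_{t_i}\rvert^2\,dt\lesssim\lvert\pi\rvert$ directly. The same argument applies to $Y^\eta_t=u(t,X^\eta_t)$.
	\item[(iii)] Moment bounds $c_6$ (Lemma~\ref{lem:supremum-Xtilde}) together with the bounds on $F^\rho(\cdot,0,0)$. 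The latter uses the boundedness of $u_x,u_{xx},b,\sigma\sigma^\top$, hence the terms $4K_{u_x}^2K_b^2+K_{u_{xx}}^2K_{\sigma\sigma^\top}^2$ in $c_4$.
\end{itemize}
A discrete Gronwall argument with constant $\mathrm{e}^{(1+H_1)^2T}$ then gives the $c_4\lvert\pi\rvert+\tilde c_4\lvert\pi\rvert^\alpha$ term at grid points.

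\textbf{Step 3: extension off the grid.} The continuous-time integrals $\int\lvert Y^\eta_t-Y^{\eta,\pi}_t\rvert^2dt$ and $\int\lvert Z^\eta_t-Z^{\eta,\pi}_t\rvert^2dt$ are controlled by the grid errors from Steps~1 and~2 plus the path regularity bounds of (ii). This explains the $(2T+1)$ factors. Combining all terms via $(a+b)^2\le2a^2+2b^2$ finishes the proof.

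\textbf{Main obstacle.} The main obstacle is Step~1 with fully explicit constants. Forward- and backward-defined processes must be matched while the explicit $Z$-dependence inside the forward Euler step is handled. I would follow the proof in \cite{Bender_Steiner_2013} line by line and simply track the constants.
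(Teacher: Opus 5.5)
Your route is viable, but it differs from the paper's and does not deliver the constants written in the statement.

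\textbf{What the paper does.} The paper re-derives nothing. It applies \cite[Theorem~3.1]{Bender_Steiner_2013} as a black box with the following data:
\begin{itemize}
\item terminal value $\xi=g(X^\eta_T)$ and random driver $f(t,y,z)=F^\rho(t,\tilde X^\eta_t,y,z)$;
\item approximate data $\xi^\pi=g(X^{\eta,\pi}_T)$ and $f^\pi(t_i,y,z)=F^\rho(t_i,\tilde X^{\eta,\pi}_{t_i},y,z)$.
\end{itemize}
After checking measurability, square integrability and the Lipschitz constant $H_1$, it uses \eqref{eq:F_rho_for_approximation_triple} to show that the Bender--Steiner criterion $\mathcal{E}$ collapses to the terminal loss. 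It then bounds four data terms:
\begin{itemize}
\item $\mathbb{E}\lvert\xi\rvert^2$ via $c_6$;
\item $\mathbb{E}\int_0^T\lvert F^\rho(r,\tilde X^\eta_r,0,0)\rvert^2\,\mathrm{d}r$ via the Lipschitz and boundedness properties;
\item $\mathbb{E}\lvert\xi-\xi^\pi\rvert^2\le H_g^2c_3\lvert\pi\rvert$;
\item the driver mismatch, via Lemma~\ref{lem:Lipschitz}(b), Lemma~\ref{lem:Euler_Scheme} and Jensen.
\end{itemize}
The constants $d_1,\dots,d_5,c_7,\Gamma,\alpha_\pi,\beta_\pi$ are inherited from that theorem. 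Crucially, its upper bound has no $L^2$-regularity term for $Z$ at all. It holds for arbitrary square-integrable $\xi$ and random Lipschitz drivers, where no such regularity is available. The reason is that $Z^{\eta,\pi}$ is piecewise constant, so the regularity error of $Z^\eta$ is implicitly controlled by the a-posteriori criterion itself.

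\textbf{What your proposal does.} Your Steps~1--2 split the error through a discrete backward scheme $(\bar Y^\pi,\bar Z^\pi)$. Step~1 is a discrete stability estimate, which is fine, including the orthogonal martingale increment. Step~2 is a Zhang-type a-priori discretization error, with the $L^2$-regularity taken from $Z^\eta_t=\vartheta(t,X^\eta_t,\tilde Y^\eta_t)$ and increment bounds for $X^\eta$ and $\tilde Y^\eta$. This works and gives an estimate of the same form, but not with the stated constants:
\begin{itemize}
\item The coefficient of $\lvert\pi\rvert$ picks up $H_\vartheta$, $c_{11}$, $K_\rho^2$ and $L_{b^\rho}^2(1+c_6)$ from the increments of $\tilde Y^\eta$, none of which appear in $c_4$.
\item The terminal-loss coefficient will be whatever your discrete Gronwall argument produces, not $2(2T+1)(d_1+dc_7)$.
\end{itemize}
So your claims that these steps ``yield the $c_1$ term'', ``produce $\alpha_\pi,\beta_\pi,\Gamma,c_7$'' or ``explain the $(2T+1)$ factors'' are unfounded. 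Likewise, ``following Bender--Steiner line by line'' is not what your Steps~1--2 describe.

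\textbf{Comparison.} Your approach gives a self-contained argument that exploits the Markovian structure of the auxiliary FBSDE. As far as I can see, its constants would still depend admissibly on $K_\rho$ and $H_\rho$ for Theorem~\ref{theorem:main}. The paper's approach is shorter, needs no regularity analysis of $Z^\eta$, and yields exactly the constants in the statement.
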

	
	\begin{proof}
		We define the random variables
		\begin{align*}
			\xi\coloneqq g(X^\eta_T)
			\quad\text{and}\quad
			\xi^\pi\coloneqq g(X^{\eta,\pi}_{t_N})=g(X^{\eta,\pi}_T),
		\end{align*}	
		as well as the random functions
		\begin{align*}
			f\colon\Omega\times[0,T]\times\mathbb{R}\times\mathbb{R}^d\to\mathbb{R}\quad\text{and}\quad f^\pi\colon\Omega\times\pi\times\mathbb{R}\times\mathbb{R}^d\to\mathbb{R},
		\end{align*}
		with
		\begin{align*}
			f(t,y,z)&\coloneqq f(\omega,t,y,z)\coloneqq F^\rho(t,\tilde{X}^\eta_t(\omega),y,z),\\
			f^\pi(t_i,y,z)&\coloneqq f^\pi(\omega,t_i,y,z)\coloneqq F^\rho(t_i,\tilde{X}^{\eta,\pi}_{t_i}(\omega),y,z).
		\end{align*}
		Since $X^\eta_T$ and $X^{\eta,\pi}_T$ are $\mathcal{F}_T$-measurable, from the measurability of $g$ it follows that $\xi$ and $\xi^\pi$ are $\mathcal{F}_T$-measurable. Furthermore, from the Lipschitz continuity of $g$ we obtain
		\begin{align*}
			\mathbb{E}\bigl[\lvert\xi\rvert^2\bigr]&=\mathbb{E}\bigl[\lvert g(X^\eta_T)-g(0)+g(0)\rvert^2\bigr]\leq2H_g^2\mathbb{E}\bigl[\lvert X^\eta_T\rvert^2\bigr]+2g(0)^2<\infty,\\
			\mathbb{E}\bigl[\lvert\xi^\pi\rvert^2\bigr]&=\mathbb{E}\bigl[\lvert g(X^{\eta,\pi}_T)-g(0)+g(0)\rvert^2\bigr]\leq2H_g^2\mathbb{E}\bigl[\lvert X^{\eta,\pi}_T\rvert^2\bigr]+2g(0)^2<\infty,
		\end{align*}	
		i.\,e., $\xi$ and $\xi^\pi$ are square-intgrable. Since $F^\rho$ is measurable (since it fulfills Hölder and Lipschitz conditions in all variables and is thus continuous) and $\tilde{X}^{\eta,\pi}_{t_i}$ is $\mathcal{F}_{t_i}$-measurable for all $t_i\in\pi$ it follows that $f^\pi$ is measurable and, for every $t_i\in\pi$ and $(y,z)\in\mathbb{R}\times\mathbb{R}^d$, $f^\pi(t_i,y,z)$ is $\mathcal{F}_{t_i}$-measurable. Also, since $F^\rho$ is measurable and $\tilde{X}^\eta$ is $(\mathcal{F}_t)_{t\geq0}$-progressively measurable, $f$ is measurable and, for every $(y,z)\in\mathbb{R}\times\mathbb{R}^d$, $f(\cdot,y,z)$ is $\mathcal{F}_t$-adapted. Moreover, from Lemma~\ref{lem:Lipschitz}(a) it directly follows that~$f$ and~$f^\pi$ are Lipschitz in~$(y,z)$ uniformly in~$(\omega,t)$ resp.\@~$(\omega,t_i)$ with Lipschitz constant $H_1$. Lastly, as in the proof of Theorem~\ref{theorem:auxFBSDE-sol}, it holds that
		\begin{align*}
			\mathbb{E}\left[\int_0^T\lvert f(t,0,0)\rvert^2\,\mathrm{d}t\right]
			&=\mathbb{E}\biggl[\int_0^T\lvert F^\rho(t,\tilde{X}^\eta_t,0,0)\,\mathrm{d}t\biggr]
			\overset{\eqref{eq:int_cond_X_eta_tilde}}{<}\infty,
		\end{align*}
		and, similarly, 
		\begin{align*}
			\mathbb{E}\bigl[\lvert f^\pi(t_i,0,0)\rvert^2\bigr]
			&=\mathbb{E}\bigl[\lvert F^\rho(t_i,\tilde{X}^{\eta,\pi}_{t_i},0,0)-F^\rho(t_i,0,0,0)+F^\rho(t_i,0,0,0)\rvert^2\bigr]\\
			&\leq8\mathbb{E}\left[H_2^2t_i+H_2^2\lvert\tilde{X}^{\eta,\pi}_{t_i}\rvert^2+H_3^2t_i^\alpha+H_3^2\lvert\tilde{X}^{\eta,\pi}_{t_i}\rvert^{2\alpha}\right]+2F^\rho(0,0,0,0)^2\\
			&<\infty.
		\end{align*}
		Thus, we can apply \cite[Theorem~3.1]{Bender_Steiner_2013}, which yields
		\begingroup
		\thinmuskip 0.75mu
		\medmuskip  1mu plus 0.5mu minus 1mu
		\thickmuskip  1.25mu plus 1.25mu
		\begin{align*}
			&\max_{t_i\in\pi}\mathbb{E}\bigl[\lvert Y^\eta_{t_i}-Y^{\eta,\pi}_{t_i}\rvert^2\bigr]+\mathbb{E}\left[\int_0^T\lvert Y^\eta_t-Y^{\eta,\pi}_t\rvert^2+\lvert Z^\eta_t-Z^{\eta,\pi}_t\rvert^2\,\mathrm{d}t\right]\\
			&\begin{multlined}[t]\leq
				2(2T+1)(d_1+dc_7)\mathcal{E}+\left(d_2\mathbb{E}\bigl[\lvert\xi\rvert^2\bigr]+d_3\mathbb{E}\left[\int_0^T\lvert F^\rho(r,\tilde{X}^\eta_r,0,0)\rvert^2\,\mathrm{d}r\right]\right)\lvert\pi\rvert\\
				+2(2T+1)d_4\mathbb{E}\bigl[\lvert\xi-\xi^\pi\rvert^2\bigr]+2(2T+1)d_5\int_0^T\mathbb{E}\bigl[\lvert F^\rho(t,\tilde{X}^\eta_t,Y^\eta_t,Z^\eta_t)-F^\rho(\Pi(t),\tilde{X}^{\eta,\pi}_t,Y^\eta_t,Z^\eta_t)\rvert^2\bigr]\,\mathrm{d}t
			\end{multlined}\\
			&=2(2T+1)(d_1+c_7)\mathcal{E}+(d_2(\mathrm{II})+d_3(\mathrm{III}))\lvert\pi\rvert+2(2T+1)d_4(\mathrm{IV})+2(2T+1)d_5(\mathrm{V}),
		\end{align*}
		\endgroup
		where
		\begin{align*}
			\mathcal{E}\coloneqq\mathbb{E}\bigl[\lvert\xi^\pi-Y^{\eta,\pi}_{t_N}\rvert^2\bigr]+\max_{1\leq i\leq N}\mathbb{E}\Biggl[\biggl\lvert Y^{\eta,\pi}_{t_i}-Y^{\eta,\pi}_{t_0}-\sum_{j=0}^{i-1}\bigl(f^\pi(t_j,Y^{\eta,\pi}_{t_j},Z^{\eta,\pi}_{t_j})\Delta_j+Z^{\eta,\pi}_{t_j}\Delta W_j\bigr)\biggr\rvert^2\Biggr].
		\end{align*}
		
		Observe that
		\begin{align*}
			f^\pi(t_i,Y^{\eta,\pi}_{t_i},Z^{\eta,\pi}_{t_i})
			&=F^\rho(t_i,\tilde{X}^{\eta,\pi}_{t_i},Y^{\eta,\pi}_{t_i},Z^{\eta,\pi}_{t_i})
			\overset{\eqref{eq:F_rho_for_approximation_triple}}{=}F(t_i,X^{\eta,\pi}_{t_i},Y^{\eta,\pi}_{t_i},\rho(t_i,X^{\eta,\pi}_{t_i})).
		\end{align*}	
		Consequently, since, by the recursion of the Euler-Maruyama scheme, it holds that
		\begin{align*}
			Y^{\eta,\pi}_{t_i}&=Y^{\eta,\pi}_{t_0}+\sum_{j=0}^{i-1}\bigl(F(t_j,X^{\eta,\pi}_{t_j},Y^{\eta,\pi}_{t_j},\rho(t_j,X^{\eta,\pi}_{t_j}))\Delta_j+\rho(t_j,X^{\eta,\pi}_{t_j})\Delta W_j\bigr),
		\end{align*}
		the term $\mathcal{E}$ simplifies to
		\begin{align*}
			\mathcal{E}=\mathbb{E}\bigl[\lvert\xi^\pi-Y^{\eta,\pi}_{t_N}\rvert^2\bigr].
		\end{align*}
		
		Inserting the definition $\xi=g(X^\eta_T)$ and using the Lipschitz continuity of $g$, we get that
		\begin{align*}
			(\mathrm{II})
			=\mathbb{E}\bigl[\lvert g(X^\eta_T)-g(0)+g(0)\rvert^2\bigr]
			\leq2\bigl(H_g^2\mathbb{E}\bigl[\lvert X^\eta_T\rvert^2\bigr]+g(0)^2\bigr)
			\leq2\bigl(H_g^2c_6+g(0)^2\bigr).
		\end{align*}
		
		Similarly, now using the Hölder and Lipschitz conditions on $F$, it follows that
		\begin{align*}
			(\mathrm{III})
			&\leq4\mathbb{E}\biggl[\int_0^T
				\begin{multlined}[t]
					\lvert F(r,X^\eta_r,0,0)-F(0,0,0,0)\rvert^2+F(0,0,0,0)^2\\
					+\bigl\lvert u_x(r,X^\eta_r)\bigl[b(r,X^\eta_r,\tilde{Y}^\eta_r,\rho(r,X^\eta_r))-b(r,X^\eta_r,0,0)\bigr]\bigr\rvert^2\\
					+\frac{1}{4}\mathrm{tr}\bigl[u_{xx}(r,X^\eta_r)\bigl((\sigma\sigma^\top)(r,X^\eta_r,\tilde{Y}^\eta_r)-(\sigma\sigma^\top)(r,X^\eta_r,0)\bigr)\bigr]^2\,\mathrm{d}r\biggr]
				\end{multlined}\\
			&\leq4H_F^2\left(2\int_0^Tr+\mathbb{E}\bigl[\lvert X^\eta_r\rvert^2\bigr]\,\mathrm{d}r\right)+4T\bigl(F(0,0,0,0)^2+4K_{u_x}^2K_b^2+K_{u_{xx}}^2K_{\sigma\sigma^\top}^2\bigr)\\
			&\leq4H_F^2\biggl(T^2+2T\mathbb{E}\biggl[\sup_{t\in[0,T]}\lvert X^\eta_t\rvert^2\biggr]\biggr)+4T\bigl(F(0,0,0,0)^2+4K_{u_x}^2K_b^2+K_{u_{xx}}^2K_{\sigma\sigma^\top}^2\bigr)\\
			&\leq4T\bigl(H_F^2(T+2c_6)+F(0,0,0,0)^2+4K_{u_x}^2K_b^2+K_{u_{xx}}^2K_{\sigma\sigma^\top}^2\bigr).
		\end{align*}
		
		Inserting the definitions $\xi=g(X^\eta_T)$ and $\xi^\pi=g(X^{\eta,\pi}_T)$, using the Lipschitz continuity of $g$, and applying Lemma~\ref{lem:Euler_Scheme}, we obtain
		\begin{align*}
			(\mathrm{IV})
			\leq H_g^2\mathbb{E}\bigl[\lvert X^\eta_T-X^{\eta,\pi}_T\rvert^2\bigr]
			\leq H_g^2\mathbb{E}\bigl[\lvert\tilde{X}^\eta_T-\tilde{X}^{\eta,\pi}_T\rvert^2\bigr]
			\leq H_g^2c_3\lvert\pi\rvert.
		\end{align*}
		From Lemma~\ref{lem:Lipschitz}(b), Lemma~\ref{lem:Euler_Scheme} and Jensen's inequality we obtain
		\begin{align*}
			(\mathrm{V})
			&\leq\begin{multlined}[t]
				2H_2^2\left(\int_0^T\mathbb{E}\Bigl[\bigl(\lvert t-\Pi(t)\rvert^{1/2}+\lvert\tilde{X}^\eta_t-\tilde{X}^{\eta,\pi}_t\rvert\bigr)^2\Bigr]\,\mathrm{d}t\right)\\
				+2H_3^2\left(\int_0^T\mathbb{E}\Bigl[\bigl(\lvert t-\Pi(t)\rvert^{\alpha/2}+\lvert\tilde{X}^\eta_t-\tilde{X}^{\eta,\pi}_t\rvert^\alpha\bigr)^2\Bigr]\,\mathrm{d}t\right)
			\end{multlined}\\
			&\leq\begin{multlined}[t]
				4H_2^2\biggl(\int_0^T\mathbb{E}\bigl[\lvert t-\Pi(t)\rvert\bigr]+\sup_{s\in[0,T]}\mathbb{E}\bigl[\lvert\tilde{X}^\eta_s-\tilde{X}^{\eta,\pi}_s\rvert^2\bigr]\,\mathrm{d}t\biggr)\\
				+4H_3^2\biggl(\int_0^T\mathbb{E}\bigl[\lvert t-\Pi(t)\rvert^\alpha\bigr]+\sup_{s\in[0,T]}\mathbb{E}\bigl[\lvert\tilde{X}^\eta_s-\tilde{X}^{\eta,\pi}_s\rvert^{2\alpha}\bigr]\,\mathrm{d}t\biggr)
			\end{multlined}\\
			&\leq4H_2^2(1+c_3)T\lvert\pi\rvert+4H_3^2\biggl(\int_0^T\mathbb{E}\bigl[\lvert t-\Pi(t)\rvert^\alpha\bigr]+\biggl(\sup_{s\in[0,T]}\mathbb{E}\bigl[\lvert\tilde{X}^\eta_s-\tilde{X}^{\eta,\pi}_s\rvert^{2}\bigr]\biggr)^\alpha\,\mathrm{d}t\biggr)\\
			&\leq4H_2^2(1+c_3)T\lvert\pi\rvert+4H_3^2(1+c_3^\alpha)T\lvert\pi\rvert^\alpha.
		\end{align*}
		Putting the above estimates together gives the result.
	\end{proof}
	
	\subsection{Connecting the Original FBSDE and the Auxiliary Decoupled FBSDE}
	In a second step, we estimate the deviation between the adapted solution $(X,Y,Z)$ of the original FBSDE~\eqref{eq:FBSDE} and the adapted solution $(X^\eta,Y^\eta,Z^\eta)$ of the modified FBSDE~\eqref{eq:FBSDE-rho}:
	\begin{align*}
		\mathcal{E}_\eta\coloneqq\sup_{t\in[0,T]}\mathbb{E}\bigl[\lvert X_t-X^\eta_t\rvert^2\bigr]+\max_{t_i\in\pi}\mathbb{E}\bigl[\lvert Y_{t_i}-Y^\eta_{t_i}\rvert^2\bigr]+\mathbb{E}\left[\int_0^T\bigl(\lvert Y_t-Y^\eta_t\rvert^2+\lvert Z_t-Z^\eta_t\rvert^2\bigr)\,\mathrm{d}t\right].
	\end{align*}
	
	\begin{lem}\label{lem:estimate_SDE}
		Let
		\begin{align*}
			c_8&\coloneqq\Bigl[12H_b^2\bigl(1+(1+H_\vartheta)^2T\bigr)T+10H_\sigma^2T\Bigr]c_1\mathrm{e}^{c_{10}},\\
			c_9&\coloneqq\begin{multlined}[t]
				\Bigl[12H_b^2T^2\bigl((H_u+H_\rho+H_\vartheta H_u)^2+H_u^2(1+H_\vartheta)^2c_{11}+(1+H_\rho+H_\vartheta)^2c_3\bigr)\\
				+10H_\sigma^2T(H_u^2+c_3+H_u^2c_{11})+\Bigl(12H_b^2\bigl(1+(1+H_\vartheta)^2T\bigr)T+10H_\sigma^2T\Bigr)c_4\Bigr]\mathrm{e}^{c_{10}},
			\end{multlined}\\
			\tilde{c}_9&\coloneqq\Bigl[12H_b^2\bigl(1+(1+H_\vartheta)^2T\bigr)T+10H_\sigma^2T\Bigr]\tilde{c}_4\mathrm{e}^{c_{10}},
		\end{align*}
		where
		\begin{align*}
		c_{10}&\coloneqq(1+H_u)^2T\bigl(12H_b^2(1+H_\vartheta)^2T+10H_\sigma^2\bigr).
		\end{align*}
		It holds that
		\begin{align*}
			\sup_{t\in[0,T]}\mathbb{E}\bigl[\lvert X_t-X^\eta_t\rvert^2\bigr]
			\leq c_8\mathbb{E}\bigl[\lvert\xi^\pi-Y^{\eta,\pi}_{t_N}\rvert^2\bigr]+c_9\lvert\pi\rvert+\tilde{c}_9\lvert\pi\rvert^\alpha.
		\end{align*}
	\end{lem}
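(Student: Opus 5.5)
We compare the forward equations directly. Since $Y_s=u(s,X_s)$ and $Z_s=v(s,X_s)$, the original forward SDE reads
\begin{align*}
X_t=x_0+\int_0^tb\bigl(s,X_s,u(s,X_s),v(s,X_s)\bigr)\,\mathrm{d}s+\int_0^t\sigma\bigl(s,X_s,u(s,X_s)\bigr)\,\mathrm{d}W_s ,
\end{align*}
while $X^\eta$ has drift $b(s,X^\eta_s,\tilde Y^\eta_s,\rho(s,X^\eta_s))$ and diffusion $\sigma(s,X^\eta_s,\tilde Y^\eta_s)$. We take the difference, square it, and use $(a+b)^2\le2a^2+2b^2$, Cauchy--Schwarz ($\le 2T\int\cdot$) on the $\mathrm{d}s$-part and Itô's isometry on the $\mathrm{d}W$-part. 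The Lipschitz properties of $b$ and $\sigma$ then bound $\mathbb{E}|X_t-X^\eta_t|^2$ by integrals of
\begin{align*}
\mathbb{E}|X_s-X^\eta_s|^2,\qquad \mathbb{E}|u(s,X_s)-\tilde Y^\eta_s|^2,\qquad \mathbb{E}|v(s,X_s)-\rho(s,X^\eta_s)|^2 .
\end{align*}

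The key step is to express the last two quantities through $X-X^\eta$ plus quantities already controlled in the auxiliary problem.

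For the $y$-term, we write
\begin{align*}
u(s,X_s)-\tilde Y^\eta_s=\bigl(u(s,X_s)-u(s,X^\eta_s)\bigr)+\bigl(Y^\eta_s-\tilde Y^\eta_s\bigr),
\end{align*}
using $Y^\eta_s=u(s,X^\eta_s)$ from Theorem~\ref{theorem:auxFBSDE-sol}. We then split the second piece through the Euler scheme, using that $\tilde Y^{\eta,\pi}=Y^{\eta,\pi}$:
\begin{align*}
Y^\eta_s-\tilde Y^\eta_s=\bigl(Y^\eta_s-Y^{\eta,\pi}_s\bigr)+\bigl(\tilde Y^{\eta,\pi}_s-\tilde Y^\eta_s\bigr).
\end{align*}
The first piece is controlled in $\int\mathbb{E}$ by Theorem~\ref{theorem:estimate_rho_approx}, and the second by Lemma~\ref{lem:Euler_Scheme} with the bound $c_3|\pi|$.

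For the $z$-term, we insert $Z^\eta_s=\vartheta(s,X^\eta_s,\tilde Y^\eta_s)$ and note that $v(s,X_s)=\vartheta(s,X_s,u(s,X_s))$. Then
\begin{align*}
v(s,X_s)-\rho(s,X^\eta_s)={}&\bigl[\vartheta(s,X_s,u(s,X_s))-\vartheta(s,X^\eta_s,\tilde Y^\eta_s)\bigr]+\bigl(Z^\eta_s-Z^{\eta,\pi}_s\bigr)\\
&+\bigl(\rho(\Pi(s),X^{\eta,\pi}_{\Pi(s)})-\rho(s,X^\eta_s)\bigr).
\end{align*}
The bracket is bounded by Remark~\ref{rem:HLC_uxsigma} by $H_\vartheta\bigl(|X_s-X^\eta_s|+|u(s,X_s)-\tilde Y^\eta_s|\bigr)$, which reduces it to the previous case. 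The middle term is controlled by Theorem~\ref{theorem:estimate_rho_approx}. The last term is bounded by $H_\rho\bigl(|\pi|^{1/2}+|X^\eta_s-X^{\eta,\pi}_s|\bigr)$, and Lemma~\ref{lem:Euler_Scheme} handles it; if needed, Lemma~\ref{lem:supremum_Xeta} handles the $X^\eta_s-X^\eta_{\Pi(s)}$ pieces, giving the $c_{11}$ contributions.

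The hardest part is organising this splitting so that every piece is either a multiple of $\mathbb{E}|X_s-X^\eta_s|^2$ with a coefficient like $(1+H_u)^2(1+H_\vartheta)^2$, or an already-estimated auxiliary error. In particular, the terms $\mathbb{E}\int|Y^\eta-Y^{\eta,\pi}|^2+|Z^\eta-Z^{\eta,\pi}|^2$ must enter only through the time integral, never pointwise in $t$, and we only have integrated control for them. This is exactly why the diffusion part via Itô's isometry and the drift part via Cauchy--Schwarz both produce $\int_0^t$, which can be extended to $\int_0^T$ for the auxiliary terms.

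Collecting everything yields an inequality of the form
\begin{align*}
\phi(t)\le A+c_{10}T^{-1}\int_0^t\phi(s)\,\mathrm{d}s,\qquad \phi(t)\coloneqq\mathbb{E}|X_t-X^\eta_t|^2 .
\end{align*}
Here $A$ is $\bigl[12H_b^2(1+(1+H_\vartheta)^2T)T+10H_\sigma^2T\bigr]$ times the bound of Theorem~\ref{theorem:estimate_rho_approx}, i.e. $c_1\mathbb{E}|\xi^\pi-Y^{\eta,\pi}_{t_N}|^2+c_4|\pi|+\tilde c_4|\pi|^\alpha$, plus the Euler and Hölder remainders. Gronwall's lemma then gives $\phi(t)\le A\,\mathrm{e}^{c_{10}}$ uniformly in $t$, which is the claim with the stated constants $c_8$, $c_9$, $\tilde c_9$.
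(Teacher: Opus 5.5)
Your argument is sound, but it departs from the paper at one point, and that changes the constants. For $Y_s-\tilde Y^\eta_s$ the paper does not compare $Y^\eta$ with $Y^{\eta,\pi}$ at time $s$. Instead it detours through the grid point $\Pi(s)$:
\[
\lvert Y_s-\tilde Y^\eta_s\rvert\le H_u\lvert X_s-X^\eta_s\rvert+H_u\lvert s-\Pi(s)\rvert^{1/2}+H_u\lvert X^\eta_s-X^\eta_{\Pi(s)}\rvert+\lvert Y^\eta_{\Pi(s)}-Y^{\eta,\pi}_{\Pi(s)}\rvert+\lvert\tilde X^{\eta,\pi}_s-\tilde X^\eta_s\rvert .
\]
It then uses the \emph{pointwise} grid control $\max_{t_i\in\pi}\mathbb{E}[\lvert Y^\eta_{t_i}-Y^{\eta,\pi}_{t_i}\rvert^2]$ from Theorem~\ref{theorem:estimate_rho_approx}. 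So your remark that only integrated control is available is not right for $Y$. Integrating in $s$ gives an extra factor $T$ on the auxiliary $Y$-error. The price is the terms $H_u\lvert s-\Pi(s)\rvert^{1/2}$ and $H_u\lvert X^\eta_s-X^\eta_{\Pi(s)}\rvert$, the latter handled by Lemma~\ref{lem:supremum_Xeta}. This is where $c_{11}$ and the $H_u$-contributions in $c_9$ come from, as well as the counts of six and five terms, hence $12H_b^2$ and $10H_\sigma^2$.

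Your splitting $Y^\eta_s-\tilde Y^\eta_s=(Y^\eta_s-Y^{\eta,\pi}_s)+(\tilde Y^{\eta,\pi}_s-\tilde Y^\eta_s)$ uses the integrated part of Theorem~\ref{theorem:estimate_rho_approx} instead. This is legitimate, since the term only enters through $\int_0^t\mathrm{d}s\le\int_0^T\mathrm{d}s$. It is also more economical: Lemma~\ref{lem:supremum_Xeta} and the $H_u$-discretization terms disappear, you get five and three terms instead of six and five, and the Gronwall exponent is smaller.

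What your route does not deliver is the stated $c_8$, $c_9$, $\tilde c_9$. Carried out, it gives for instance the coefficient
\[
10H_b^2(1+H_\vartheta)^2T+6H_\sigma^2
\]
in front of $c_1\mathbb{E}[\lvert\xi^\pi-Y^{\eta,\pi}_{t_N}\rvert^2]+c_4\lvert\pi\rvert+\tilde c_4\lvert\pi\rvert^\alpha$, with no factor $T$ on the $Y$-error. The paper's coefficient is $12H_b^2(1+(1+H_\vartheta)^2T)T+10H_\sigma^2T$, and for small $T$ yours is the larger one. So the $A$ you write down is the paper's, not the one your decomposition produces. Your closing claim that Gronwall yields exactly $c_8,c_9,\tilde c_9$ is therefore unjustified.

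You do prove an estimate of the same form. Its constants depend on $K_\rho,H_\rho$ in the same way, only through $c_1,c_3,c_4,\tilde c_4$ and polynomial factors in $H_\rho$. That is all Theorem~\ref{theorem:estimate_orig_rho} and Theorem~\ref{theorem:main} need, once $c_2,c_5,\tilde c_5$ are redefined accordingly. To obtain the lemma verbatim, you must use the detour through $\Pi(s)$.
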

	\begin{proof}
		From the definitions of $X_t$ and $X^\eta_t$, Hölder's inequality and Itô's isometry in connection with Fubini's theorem we obtain
		\begin{multline*}
			\mathbb{E}\bigl[\lvert X_t-X^\eta_t\rvert^2\bigr]
			\leq2T\int_0^t\mathbb{E}\bigl[\lvert b(s,X_s,Y_s,Z_s)-b(s,X^\eta_s,\tilde{Y}^\eta_s,\rho(s,X^\eta_s))\rvert^2\bigr]\,\mathrm{d}s\\
			+2\int_0^t\mathbb{E}\bigl[\lvert\sigma(s,X_s,Y_s)-\sigma(s,X^\eta_s,\tilde{Y}^\eta_s)\rvert^2\bigr]\,\mathrm{d}s.
		\end{multline*}
		Note that, using the identities of Theorem~\ref{theorem:auxFBSDE-sol}, we obtain
		\begin{align}\label{eq:diff_Y_Yrhotilde}
			&\lvert Y_s-\tilde{Y}^\eta_s\rvert\nonumber\\
			&\leq\lvert Y_s-u(s,X^\eta_s)\rvert+\bigl\lvert u(s,X^\eta_s)-u\bigl(\Pi(s),X^\eta_{\Pi(s)}\bigr)\bigr\rvert+\bigl\lvert u\bigl(\Pi(s),X^\eta_{\Pi(s)}\bigr)-Y^{\eta,\pi}_{\Pi(s)}\bigr\rvert+\bigl\lvert Y^{\eta,\pi}_{\Pi(s)}-\tilde{Y}^\eta_s\bigr\rvert\nonumber\\
			&=\lvert u(s,X_s)-u(s,X^\eta_s)\rvert+\bigl\lvert u(s,X^\eta_s)-u\bigl(\Pi(s),X^\eta_{\Pi(s)}\bigr)\bigr\rvert+\bigl\lvert Y^\eta_{\Pi(s)}-Y^{\eta,\pi}_{\Pi(s)}\bigr\rvert+\lvert\tilde{Y}^{\eta,\pi}_s-\tilde{Y}^\eta_s\rvert\nonumber\\
			&\leq H_u\lvert X_s-X^\eta_s\rvert+H_u\lvert s-\Pi(s)\rvert^{1/2}+H_u\bigl\lvert X^\eta_s-X^\eta_{\Pi(s)}\bigr\rvert+\bigl\lvert Y^\eta_{\Pi(s)}-Y^{\eta,\pi}_{\Pi(s)}\bigr\rvert+\lvert\tilde{X}^{\eta,\pi}_s-\tilde{X}^\eta_s\rvert.
		\end{align}
		Thus, together with Remark~\ref{rem:HLC_uxsigma}, we can bound
		\begin{align}\label{eq:diff_Z_rho}
			&\lvert Z_s-\rho(s,X^\eta_s)\rvert\nonumber\\
			&\overset{\hphantom{\eqref{eq:diff_Y_Yrhotilde}}}{\leq}\lvert Z_s-\vartheta(s,X^\eta_s,\tilde{Y}^\eta_s)\rvert+\bigl\lvert \vartheta(s,X^\eta_s,\tilde{Y}^\eta_s)-\rho\bigl(\Pi(s),X^{\eta,\pi}_{\Pi(s)}\bigr)\bigr\rvert+\bigl\lvert\rho\bigl(\Pi(s),X^{\eta,\pi}_{\Pi(s)}\bigr)-\rho(s,X^\eta_s)\bigr\rvert\nonumber\\
			&\overset{\hphantom{\eqref{eq:diff_Y_Yrhotilde}}}{=}\begin{multlined}[t]
				\lvert\vartheta(s,X_s,Y_s)-\vartheta(s,X^\eta_s,\tilde{Y}^\eta_s)\rvert+\lvert Z^\eta_s-Z^{\eta,\pi}_s\rvert+\lvert\rho\left(\Pi(s),X^{\eta,\pi}_s\right)-\rho(s,X^\eta_s)\rvert
			\end{multlined}\nonumber\\
			&\overset{\hphantom{\eqref{eq:diff_Y_Yrhotilde}}}{\leq}H_\vartheta\lvert X_s-X^\eta_s\rvert+H_\vartheta\lvert Y_s-\tilde{Y}^\eta_s\rvert+\lvert Z^\eta_s-Z^{\eta,\pi}_s\rvert+H_\rho\lvert s-\Pi(s)\rvert^{1/2}+H_\rho\lvert X^\eta_s-X^{\eta,\pi}_s\rvert\nonumber\\
			&\begin{multlined}[b]\overset{\eqref{eq:diff_Y_Yrhotilde}}{\leq}
				H_\vartheta(1+H_u)\lvert X_s-X^\eta_s\rvert+\lvert Z^\eta_s-Z^{\eta,\pi}_s\rvert+(H_\rho+H_\vartheta H_u)\lvert s-\Pi(s)\rvert^{1/2}\\
				\qquad+(H_\rho+H_\vartheta)\lvert\tilde{X}^\eta_s-\tilde{X}^{\eta,\pi}_s\rvert+H_\vartheta H_u\bigl\lvert X^\eta_s-X^\eta_{\Pi(s)}\bigr\rvert+H_\vartheta\bigl\lvert Y^\eta_{\Pi(s)}-Y^{\eta,\pi}_{\Pi(s)}\bigr\rvert.
			\end{multlined}
		\end{align}
		Then, exploiting the $\frac{1}{2}$-Hölder and Lipschitz continuity of $b$, $\sigma$, $u$, $u_x$ and $\rho$ as well as the boundedness of~$\sigma$ and~$u_x$, it follows that
		\begingroup
		\thinmuskip 0.75mu
		\medmuskip  1mu plus 0.5mu minus 1mu
		\thickmuskip  1.25mu plus 1.25mu
		\begin{align*}
			&\int_0^t\mathbb{E}\bigl[\lvert b(s,X_s,Y_s,Z_s)-b(s,X^\eta_s,\tilde{Y}^\eta_s,\rho(s,X^\eta_s))\rvert^2\bigr]\,\mathrm{d}s\\
			&\overset{\hphantom{\eqref{eq:diff_Y_Yrhotilde},\;\eqref{eq:diff_Z_rho}}}{\leq}H_b^2\int_0^t\mathbb{E}\Bigl[\bigl(\lvert X_s-X^\eta_s\rvert+\lvert Y_s-\tilde{Y}^\eta_s\rvert+\lvert Z_s-\rho(s,X^\eta_s)\rvert\bigr)^2\Bigr]\,\mathrm{d}s\\
			&\overset{\eqref{eq:diff_Y_Yrhotilde},\;\eqref{eq:diff_Z_rho}}{\leq}
				\begin{multlined}[t][0.9\displaywidth]
					H_b^2\int_0^t\mathbb{E}\Bigl[\Bigl((1+H_u)(1+H_\vartheta)\lvert X_s-X^\eta_s\rvert+\lvert Z^\eta_s-Z^{\eta,\pi}_s\rvert+(H_u+H_\rho+H_\vartheta H_u)\lvert s-\Pi(s)\rvert^{1/2}\\
				+(1+H_\rho+H_\vartheta)\lvert\tilde{X}^\eta_s-\tilde{X}^{\eta,\pi}_s\rvert+H_u(1+H_\vartheta)\bigl\lvert X^\eta_s-X^\eta_{\Pi(s)}\bigr\rvert+(1+H_\vartheta)\bigl\lvert Y^\eta_{\Pi(s)}-Y^{\eta,\pi}_{\Pi(s)}\bigr\rvert\Bigr)^2\Bigr]\,\mathrm{d}s
				\end{multlined}\\
			&\overset{\hphantom{\eqref{eq:diff_Y_Yrhotilde},\;\eqref{eq:diff_Z_rho}}}{\leq}
				\begin{multlined}[t][0.9\displaywidth]
					6H_b^2(1+H_u)^2(1+H_\vartheta)^2\int_0^t\mathbb{E}\bigl[\lvert X_s-X^\eta_s\rvert^2\bigr]\,\mathrm{d}s\\
					+6H_b^2\left(\mathbb{E}\left[\int_0^T\lvert Z^\eta_s-Z^{\eta,\pi}_s\rvert^2\right]\,\mathrm{d}s+(1+H_\vartheta)^2T\max_{t_i\in\pi}\mathbb{E}\bigl[\lvert Y^\eta_{t_i}-Y^{\eta,\pi}_{t_i}\rvert^2\bigr]\right)\\
					+6H_b^2\int_0^T(H_u+H_\rho+H_\vartheta H_u)^2\lvert\pi\rvert+H_u^2(1+H_\vartheta)^2\sup_{r\in[0,T]}\mathbb{E}\bigl[\lvert X^\eta_r-X^\eta_{\Pi(r)}\rvert^2\bigr]\\
					+(1+H_\rho+H_\vartheta)^2\sup_{r\in[0,T]}\mathbb{E}\bigl[\lvert\tilde{X}^\eta_r-\tilde{X}^{\eta,\pi}_r\rvert^2\bigr]\,\mathrm{d}s
				\end{multlined}\\
			&\overset{\hphantom{\eqref{eq:diff_Y_Yrhotilde},\;\eqref{eq:diff_Z_rho}}}{\leq}
				\begin{multlined}[t]
					6H_b^2(1+H_u)^2(1+H_\vartheta)^2\int_0^t\mathbb{E}\bigl[\lvert X_s-X^\eta_s\rvert^2\bigr]\,\mathrm{d}s\\
					\shoveleft{+6H_b^2(1+(1+H_\vartheta)^2T)\Bigl(c_1\mathbb{E}\bigl[\lvert\xi^\pi-Y^{\eta,\pi}_{t_N}\rvert^2\bigr]+c_4\lvert\pi\rvert+\tilde{c}_4\lvert\pi\rvert^\alpha}\Bigr)\\
					+6H_b^2T\bigl((H_u+H_\rho+H_\vartheta H_u)^2+H_u^2(1+H_\vartheta)^2c_{11}
					+(1+H_\rho+H_\vartheta)^2c_3\bigr)\lvert\pi\rvert.
				\end{multlined}
		\end{align*}
		\endgroup
		To derive the last inequality we additionally made use of Theorem~\ref{theorem:estimate_rho_approx} and Lemmas~\ref{lem:supremum_Xeta} and~\ref{lem:Euler_Scheme}. Moreover, under the Lipschitz continuity of $\sigma$ it holds that
		\begin{align*}
			&\int_0^t\mathbb{E}\bigl[\lvert\sigma(s,X_s,Y_s)-\sigma(s,X^\eta_s,\tilde{Y}^\eta_s)\rvert^2\bigl]\,\mathrm{d}s\\
			&\overset{\hphantom{\eqref{eq:diff_Y_Yrhotilde}}}{\leq}H_\sigma^2\int_0^t\mathbb{E}\Bigl[\bigl(\lvert X_s-X^\eta_s\rvert+\lvert Y_s-\tilde{Y}^\eta_s\rvert\bigr)^2\Bigr]\,\mathrm{d}s\\
			&\overset{\eqref{eq:diff_Y_Yrhotilde}}{\leq}5H_\sigma^2\begin{multlined}[t]
				\int_0^t(1+H_u)^2\mathbb{E}\bigl[\lvert X_s-X^\eta_s\rvert^2\bigr]+H_u^2\mathbb{E}[\lvert s-\Pi(s)\rvert]+H_u^2\mathbb{E}\bigl[\lvert X^\eta_s-X^\eta_{\Pi(s)}\rvert^2\bigr]\\
				+\mathbb{E}\bigl[\lvert Y^\eta_{\Pi(s)}-Y^{\eta,\pi}_{\Pi(s)}\rvert^2\bigr]+\mathbb{E}\bigl[\lvert\tilde{X}^{\eta,\pi}_s-\tilde{X}^\eta_s\rvert^2\bigr]\,\mathrm{d}s
			\end{multlined}\\
			&\overset{\hphantom{\eqref{eq:diff_Y_Yrhotilde}}}{\leq}\begin{multlined}[t]
				5H_\sigma^2(1+H_u)^2\int_0^t\mathbb{E}\bigl[\lvert X_s-X^\eta_s\rvert^2\bigr]\,\mathrm{d}s
				+5H_\sigma^2\int_0^tH_u^2\lvert\pi\rvert+H_u^2\sup_{r\in[0,T]}\mathbb{E}\bigl[\lvert X^\eta_r-X^\eta_{\Pi(r)}\rvert^2\bigr]\\
				+\max_{t_i\in\pi}\mathbb{E}\bigl[\lvert Y^\eta_{t_i}-Y^{\eta,\pi}_{t_i}\rvert^2\bigr]+\sup_{r\in[0,T]}\mathbb{E}\bigl[\lvert\tilde{X}^{\eta,\pi}_r-\tilde{X}^\eta_r\rvert^2\bigr]\,\mathrm{d}s
			\end{multlined}\\
			&\overset{\hphantom{\eqref{eq:diff_Y_Yrhotilde}}}{\leq}\begin{multlined}[t]
				5H_\sigma^2(1+H_u)^2\int_0^t\mathbb{E}\bigl[\lvert X_s-X^\eta_s\rvert^2\bigr]\\
				+5H_\sigma^2T\bigl(c_1\mathbb{E}\bigl[\lvert\xi^\pi-Y^{\eta,\pi}_{t_N}\rvert^2\bigr]+c_4\lvert\pi\rvert+\tilde{c}_4\lvert\pi\rvert^\alpha\bigr)+5H_\sigma^2T(H_u^2+c_3+H_u^2c_{11})\lvert\pi\rvert.
			\end{multlined}
		\end{align*}
		Together we obtain
		\begin{align*}
			\mathbb{E}\bigl[\lvert X_t-X^\eta_t\rvert^2\bigr]
			&\leq\begin{multlined}[t]
				\bigl[12H_b^2(1+H_u)^2(1+H_\vartheta)^2T+10H_\sigma^2(1+H_u)^2\bigr]\int_0^t\mathbb{E}\bigl[\lvert X_s-X^\eta_s\rvert^2\bigr]\,\mathrm{d}s\\
				\shoveleft{+\bigl[12H_b^2\bigl(1+(1+H_\vartheta)^2T\bigr)T+10H_\sigma^2T\bigr]\bigl(c_1\mathbb{E}\bigl[\lvert\xi^\pi-Y^{\eta,\pi}_{t_N}\rvert^2\bigr]+c_4\lvert\pi\rvert+\tilde{c}_4\lvert\pi\rvert^\alpha\bigr)}\\
				\shoveleft{+\bigl[12H_b^2T^2\bigl((H_u+H_\rho+H_\vartheta H_u)^2+H_u^2(1+H_\vartheta)^2c_{11}
					+(1+H_\rho+H_\vartheta)^2c_3\bigr)}\\
				+10H_\sigma^2T(H_u^2+c_3+H_u^2c_{11})\bigr]\lvert\pi\rvert.
			\end{multlined}
		\end{align*}
		Applying Gronwall's lemma finishes the proof.
	\end{proof}
	
	\begin{remark}
		From Equation~\eqref{eq:diff_Y_Yrhotilde} it follows that
		\begin{align*}
			\mathbb{E}\left[\int_0^t\lvert Y_s-\tilde{Y}^\eta_s\rvert^2\,\mathrm{d}s\right]
			&=\int_0^t\mathbb{E}\bigl[\lvert Y_s-\tilde{Y}^\eta_s\rvert^2\bigr]\,\mathrm{d}s\\
			&\leq5\int_0^t\begin{multlined}[t]
				H_u^2\mathbb{E}\bigl[\lvert X_s-X^\eta_s\rvert^2\bigr]+H_u^2\mathbb{E}\bigl[\lvert s-\Pi(s)\rvert\bigr]+H_u^2\mathbb{E}\bigl[\lvert X^\eta_s-X^\eta_{\Pi(s)}\rvert^2\bigr]\\
				+\mathbb{E}\bigl[\lvert Y^\eta_{\Pi(s)}-Y^{\eta,\pi}_{\Pi(s)}\rvert^2\bigr]+\mathbb{E}\bigl[\lvert\tilde{X}^{\eta,\pi}_s-\tilde{X}^\eta_s\rvert^2\bigr]\,\mathrm{d}s
			\end{multlined}\\
			&\leq5\int_0^t\begin{multlined}[t]
				H_u^2\sup_{r\in[0,T]}\mathbb{E}\bigl[\lvert X_r-X^\eta_r\rvert^2\bigr]+H_u^2\lvert\pi\rvert+H_u^2\sup_{r\in[0,T]}\mathbb{E}\bigl[\lvert X^\eta_r-X^\eta_{\Pi(r)}\rvert^2\bigr]\\
				+\max_{t_i\in\pi}\mathbb{E}\bigl[\lvert Y^\eta_{t_i}-Y^{\eta,\pi}_{t_i}\rvert^2\bigr]+\sup_{r\in[0,T]}\mathbb{E}\bigl[\lvert \tilde{X}^{\eta,\pi}_r-\tilde{X}^\eta_r\rvert^2\bigr]\,\mathrm{d}s
			\end{multlined}\\
			&\leq\begin{multlined}[t]
				5T(c_1+H_u^2c_8)\mathbb{E}\bigl[\lvert\xi^\pi-Y^{\eta,\pi}_{t_N}\rvert^2\bigr]+5T(\tilde{c}_4+H_u^2\tilde{c}_9)\lvert\pi\rvert^\alpha\\
				+5T\bigl(c_3+c_4+H_u^2(1+c_9+c_{11})\bigr)\lvert\pi\rvert.
			\end{multlined}
		\end{align*}
	\end{remark}
	
	\begin{theorem}\label{theorem:estimate_orig_rho}
		Define
		\begin{align*}
			c_2&\coloneqq\left(1+H_u^2(1+T)+2TH_\vartheta^2(1+5TH_u^2)\right)c_8+10T^2H_\vartheta c_1,\\
			c_5&\coloneqq\left(1+H_u^2(1+T)+2TH_\vartheta^2(1+5TH_u^2)\right)c_9+10T^2H_\vartheta\left(c_4+c_3+H_u^2(1+c_{11})\right),\\
			\tilde{c}_5&\coloneqq\left(1+H_u^2(1+T)+2TH_\vartheta^2(1+5TH_u^2)\right)\tilde{c}_9+10T^2H_\vartheta\tilde{c}_4.
		\end{align*}
		It holds that
		\begin{align*}
			\mathcal{E}_\eta\leq c_2\mathbb{E}\bigl[\lvert\xi^\pi-Y^{\eta,\pi}_{t_N}\rvert^2\bigr]+c_5\lvert\pi\rvert+\tilde{c}_5\lvert\pi\rvert^\alpha.
		\end{align*}
	\end{theorem}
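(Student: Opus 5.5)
We bound the four pieces of $\mathcal{E}_\eta$ separately; all of them reduce to the forward error $\sup_t\mathbb{E}[\lvert X_t-X^\eta_t\rvert^2]$, which Lemma~\ref{lem:estimate_SDE} already controls, plus the auxiliary-scheme errors of Theorem~\ref{theorem:estimate_rho_approx} and Lemmas~\ref{lem:Euler_Scheme} and~\ref{lem:supremum_Xeta}. The first summand of $\mathcal{E}_\eta$ is given directly by Lemma~\ref{lem:estimate_SDE} and accounts for the leading ``$1\cdot c_8$'' in $c_2$.

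\textbf{The $Y$-terms.} For the $Y$-terms we use the representations $Y_t=u(t,X_t)$ and $Y^\eta_t=u(t,X^\eta_t)$ from Theorem~\ref{theorem:auxFBSDE-sol}. The Lipschitz property of $u$ in $x$ then gives
\begin{align*}
\lvert Y_t-Y^\eta_t\rvert^2\leq H_u^2\lvert X_t-X^\eta_t\rvert^2.
\end{align*}
Taking expectations yields
\begin{align*}
\max_{t_i\in\pi}\mathbb{E}\bigl[\lvert Y_{t_i}-Y^\eta_{t_i}\rvert^2\bigr]\leq H_u^2\sup_t\mathbb{E}\bigl[\lvert X_t-X^\eta_t\rvert^2\bigr],
\end{align*}
and integrating over $[0,T]$ yields
\begin{align*}
\mathbb{E}\int_0^T\lvert Y_t-Y^\eta_t\rvert^2\,\mathrm{d}t\leq TH_u^2\sup_t\mathbb{E}\bigl[\lvert X_t-X^\eta_t\rvert^2\bigr].
\end{align*}
Together these produce the factor $H_u^2(1+T)$ multiplying $c_8,c_9,\tilde c_9$.

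\textbf{The $Z$-term.} We write $Z_t=\vartheta(t,X_t,Y_t)$ and $Z^\eta_t=\vartheta(t,X^\eta_t,\tilde Y^\eta_t)$ and use Remark~\ref{rem:HLC_uxsigma}:
\begin{align*}
\lvert Z_t-Z^\eta_t\rvert^2\leq 2H_\vartheta^2\bigl(\lvert X_t-X^\eta_t\rvert^2+\lvert Y_t-\tilde Y^\eta_t\rvert^2\bigr).
\end{align*}
The first part integrates to $2TH_\vartheta^2\sup_t\mathbb{E}[\lvert X_t-X^\eta_t\rvert^2]$. For the second part we invoke the remark following Lemma~\ref{lem:estimate_SDE}, which is based on \eqref{eq:diff_Y_Yrhotilde}. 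It bounds $\mathbb{E}\int_0^T\lvert Y_s-\tilde Y^\eta_s\rvert^2\,\mathrm{d}s$ by $5T$ times $H_u^2\sup\mathbb{E}\lvert X-X^\eta\rvert^2$ plus terms of order $\lvert\pi\rvert$ coming from $c_{11}$, $c_3$, and Theorem~\ref{theorem:estimate_rho_approx}. The latter carry the constants $c_1$, $c_4$, $\tilde c_4$. This accounts for the factor $2TH_\vartheta^2(1+5TH_u^2)$ on $c_8,c_9,\tilde c_9$. It also accounts for the extra summands $10T^2H_\vartheta c_1$, $10T^2H_\vartheta(c_4+c_3+H_u^2(1+c_{11}))$ and $10T^2H_\vartheta\tilde c_4$.

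\textbf{Conclusion and main difficulty.} We sum the three contributions and insert Lemma~\ref{lem:estimate_SDE} for $\sup_t\mathbb{E}[\lvert X_t-X^\eta_t\rvert^2]$. This gives the claim with the stated $c_2,c_5,\tilde c_5$, possibly after harmless enlargement, e.g.\ of $H_\vartheta$ versus $H_\vartheta^2$ in the last summands. There is no genuine analytical obstacle, since all of the analysis sits in Lemma~\ref{lem:estimate_SDE}. The only delicate point is bookkeeping: the $Z$-comparison must go through $\tilde Y^\eta$ rather than $Y^\eta$, so that \eqref{eq:diff_Y_Yrhotilde} is needed, and the constants must be collected consistently.
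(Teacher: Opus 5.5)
Your proposal is correct and follows the paper's proof step for step: Lipschitz continuity of $u$ handles the two $Y$-terms, Lipschitz continuity of $\vartheta$ handles the $Z$-term with the comparison routed through $\tilde Y^\eta$ and the remark following Lemma~\ref{lem:estimate_SDE}, and Lemma~\ref{lem:estimate_SDE} is then inserted for the forward error. Your remark on the last summands is also right: the computation (yours and the paper's) yields $10T^2H_\vartheta^2$ rather than $10T^2H_\vartheta$ in $c_2$, $c_5$ and $\tilde c_5$, so this is a typo in the stated constants (relevant only when $H_\vartheta>1$), not a gap in the argument.
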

	\begin{proof}
		Since $Y_t=u(t,X_t)$ and $Y^\eta_t=u(t,X^\eta_t)$, from the Lipschitz continuity of $u(t,x)$ in $x$ uniformly in $t$ and applying Lemma~\ref{lem:estimate_SDE} it follows that
		\begin{align*}
			\max_{t_i\in\pi}\mathbb{E}\bigl[\lvert Y_{t_i}-Y^\eta_{t_i}\rvert^2\bigr]
			&=\max_{t_i\in\pi}\mathbb{E}\bigl[\lvert u(t_i,X_{t_i})-u(t_i,X^\eta_{t_i})\rvert^2\bigr]\\
			&\leq H_u^2\max_{t_i\in\pi}\mathbb{E}\bigl[\lvert X_{t_i}-X^\eta_{t_i}\rvert^2\bigr]\\
			&\leq H_u^2c_8\mathbb{E}\bigl[\lvert\xi^\pi-Y^{\eta,\pi}_{t_N}\rvert^2\bigr]+H_u^2c_9\lvert\pi\rvert+H_u^2\tilde{c}_9\lvert\pi\rvert^\alpha.
		\end{align*}
		Using the same arguments as above and applying Fubini's theorem, we also obtain
		\begin{align*}
			\mathbb{E}\left[\int_0^T\lvert Y_t-Y^\eta_t\rvert^2\,\mathrm{d}t\right]
			&\leq H_u^2Tc_8\mathbb{E}\bigl[\lvert\xi^\pi-Y^{\eta,\pi}_{t_N}\rvert^2\bigr]+H_u^2Tc_9\lvert\pi\rvert+H_u^2T\tilde{c}_9\lvert\pi\rvert^\alpha.
		\end{align*}
		Similarly, now using the definitions $Z_t=v(t,X_t)=\vartheta(t,X_t,Y_t)$ and $Z^\eta_t=\vartheta(t,X^\eta_t,\tilde{Y}^\eta_t)$ as well as the Lipschitz continuity of $\vartheta(t,x,y)$ in $(x,y)$ uniformly in $t$ and the last remark, we get
		\begin{align*}
			\mathbb{E}\left[\int_0^T\lvert Z_t-Z^\eta_t\rvert^2\,\mathrm{d}t\right]
			&=\int_0^T\mathbb{E}\bigl[\lvert\vartheta(t,X_t,Y_t)-\vartheta(t,X^\eta_t,\tilde{Y}^\eta_t)\rvert^2\bigr]\,\mathrm{d}t\\
			&\leq2H_\vartheta^2\int_0^T\mathbb{E}\bigl[\lvert X_t-X^\eta_t\rvert^2\bigr]+\mathbb{E}\bigl[\lvert Y_t-\tilde{Y}^\eta_t\rvert^2\bigr]\,\mathrm{d}t\\
			&\leq\begin{multlined}[t]
				2TH_\vartheta^2\bigl(c_8+5T(c_1+H_u^2c_8)\bigr)\mathbb{E}\bigl[\lvert\xi^\pi-Y^{\eta,\pi}_{t_N}\rvert^2\bigr]\\
				+2TH_\vartheta^2\bigl(\tilde{c}_9+5T(\tilde{c}_4+H_u^2\tilde{c}_9)\bigr)\lvert\pi\rvert^\alpha\\
				+2TH_\vartheta^2\bigl[c_9+5T\bigl(c_3+c_4+H_u^2(1+c_9+c_{11})\bigr)\bigr]\lvert\pi\rvert.
			\end{multlined}
		\end{align*}
		Summing up the three estimates and again applying Lemma~\ref{lem:estimate_SDE} completes the proof.
	\end{proof}
	
	\begin{proof}[Proof of Theorem~\ref{theorem:main}]
		It holds that
		\begin{align*}
			\mathcal{E}_{\eta,\pi}&\leq2\left(\mathcal{E}_\eta+\mathcal{E}^\eta_\pi\right).
		\end{align*}
		Thus, from Lemma~\ref{lem:Euler_Scheme}, Theorem~\ref{theorem:estimate_rho_approx} and Theorem~\ref{theorem:estimate_orig_rho}, it follows that
		\begin{align}\label{eq:main_concrete}
			\mathcal{E}_{\eta,\pi}\leq2(c_1+c_2)\mathbb{E}\bigl[\lvert Y^{\eta,\pi}_T-g(X^{\eta,\pi}_T)\rvert^2\bigr]+2(c_3+c_4+c_5)\lvert\pi\rvert+2(\tilde{c}_4+\tilde{c}_5)\lvert\pi\rvert^\alpha,
		\end{align}
		for $\lvert\pi\rvert\leq\min\bigl\{1,(8H_1(1+4H_1))^{-1},\Gamma^{-1}\bigr\}$.
		The statement now follows from some basic upper bounding of the constants in dependence of $K_\rho$ and $H_\rho$ (see Appendix~\ref{appendix_B}). For bounding $\beta_\pi$, note that $\lvert\pi\rvert\leq(8H_1(1+4H_1))^{-1}$ by the assumption of Theorem~\ref{theorem:estimate_rho_approx}, that is $\beta_\pi$ can be bounded by a multiple of $4H_1+16H_1^2$.
	\end{proof}
	
	\begin{remark}
		Carefully revisiting the proof of Theorem~\ref{theorem:main} one notices that the $\lvert\pi\rvert^\alpha$-part results from the $u_{xx}$-part of $F^\rho$. If $\sigma$ is independent of $Y_t$, this $u_{xx}$-part vanishes and thus, in this case, the convergence in time holds up to order $1$ instead of order $\alpha$.
	\end{remark}
	
	\section{Sufficient Conditions on the Coefficient Functions}\label{sec:sufficient_conditions}
	So far, the convergence does not fully rely on assumptions on the known functions $b$, $\sigma$, $F$, $g$ and $\rho$, but also on assumptions on the unknown solution~$u$ of the parabolic PDE~\eqref{eq:PDE} together with its spatial derivatives $u_x$ and $u_{xx}$. The goal of this section is to give additional conditions on $b$, $\sigma$, $F$ and $g$, such that Assumption~\ref{ass:sol_PDE} is fulfilled. To this end, we make use of classical results on the well-posedness of parabolic Cauchy problems by \citeauthor{Ladyzenskaja_Solonnikov_Uralceva_1968}~\cite{Ladyzenskaja_Solonnikov_Uralceva_1968}.
	
	In the following we write
	\begin{align*}
		x&=(x_1,\ldots,x_d)^\top,&b(t,x,y,z)&=(b_1(t,x,y,z),\ldots,b_d(t,x,y,z))^\top,\\
		z&=(z_1,\ldots,z_d),&\sigma(t,x,y)&=(\sigma_{ij}(t,x,y))_{i,j=1,\ldots,d}.
	\end{align*}

	\begin{assumption}\label{ass:sufficient}
		\begin{itemize}
			\item[(i)]
				There exists an $\alpha\in(0,1)$ such that $g\in H^{2+\alpha}(D)$ for any bounded domain $D\subset\mathbb{R}^d$ and~$g$ is bounded.\footnote{For a definition of $H^{2+\alpha}(D)$, see \cite[7]{Ladyzenskaja_Solonnikov_Uralceva_1968}.}
			\item[(ii)]
				There exists a positive real constant $K_F\in\mathbb{R}_{>0}$ such that for all $(t,x)\in[0,T]\times\mathbb{R}^d$ it holds that
				\begin{align*}
					\lvert F(t,x,0,0)\rvert\leq K_F.
				\end{align*}
			\item[(iii)]
				For all $i,j,k\in\{1,\ldots,d\}$, the partial derivatives
				\begin{align*}
					\frac{\partial b_i}{\partial y},\;
					\frac{\partial b_i}{\partial z_k},\;
					\frac{\partial\sigma_{ij}}{\partial x_k},\;
					\frac{\partial\sigma_{ij}}{\partial y},\;
					\frac{\partial F}{\partial y},\;
					\frac{\partial F}{\partial z_k}
				\end{align*}	
				exist, whereby
				\begin{align*}
					\frac{\partial\sigma_{ij}}{\partial x_k}\quad\text{and}\quad
					\frac{\partial\sigma_{ij}}{\partial y}
				\end{align*}
				fulfill Hölder conditions in $t$, $x$ and $y$ with exponents $\frac{\alpha}{2}$, $\alpha$ and $\alpha$.
		\end{itemize}
	\end{assumption}
	
	\begin{theorem}\label{theorem:FBSDE-sol}
		Let Assumptions~\ref{ass:functions} and~\ref{ass:sufficient} be fulfilled. Then Assumption~\ref{ass:sol_PDE} holds true. 
	\end{theorem}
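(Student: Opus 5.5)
The plan is to deduce Assumption~\ref{ass:sol_PDE} from the classical existence theory for quasi-linear parabolic Cauchy problems in \cite{Ladyzenskaja_Solonnikov_Uralceva_1968}, namely the Cauchy-problem theorem of Chapter~V. First I reverse time, $w(s,x)\coloneqq u(T-s,x)$, and write \eqref{eq:PDE} as a forward problem in non-divergence form:
\begin{align*}
	w_s-\frac{1}{2}\sum_{i,j}a_{ij}(s,x,w)w_{x_ix_j}+A(s,x,w,w_x)=0,\qquad w(0,\cdot)=g,
\end{align*}
with $a\coloneqq\sigma\sigma^\top(T-s,x,w)$ and
\begin{align*}
	A(s,x,w,p)\coloneqq F\bigl(T-s,x,w,p\sigma(T-s,x,w)\bigr)-p\,b\bigl(T-s,x,w,p\sigma(T-s,x,w)\bigr).
\end{align*}
The $z$-argument $p\sigma$ is a smooth function of $p$, so $A$ inherits its regularity from $b$, $F$ and $\sigma$.

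Next I verify the structural hypotheses of that theorem.
\begin{itemize}
	\item[(1)] \emph{Uniform ellipticity.} This follows from \eqref{eq:elliptic}, with constants $\nu,\mu$ independent of $w$.
	\item[(2)] \emph{Sign condition.} The Ladyzhenskaya-type condition is $A(s,x,w,0)\,w\geq-c_1w^2-c_2$. Since $A(s,x,w,0)=F(T-s,x,w,0)$, Assumption~\ref{ass:sufficient}(ii) and the Lipschitz property of $F$ in $y$ give $\lvert A(s,x,w,0)\rvert\leq K_F+H_F\lvert w\rvert$, which suffices. Together with the boundedness of $g$ in Assumption~\ref{ass:sufficient}(i), the maximum principle then yields an a~priori bound $\lvert w\rvert\leq M$.
	\item[(3)] \emph{Growth in $p$.} Because $b$ is bounded and $F$ is Lipschitz in $z$ with $\sigma$ bounded, $\lvert A\rvert\leq C(1+\lvert p\rvert)$ for $\lvert w\rvert\leq M$. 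This is well within the admissible quadratic growth.
	\item[(4)] \emph{Differentiability.} The coefficients must be differentiable in $(x,w,p)$ with appropriate bounds. The derivatives in $w$ and $p$ come from Assumption~\ref{ass:sufficient}(iii) and the Lipschitz bounds; the $\partial_x$ and $\partial_w$ derivatives of $a$ are Hölder by (iii).
	\item[(5)] \emph{Hölder continuity of $A$ in $(s,x)$.} This follows from the $\frac12$-Hölder/Lipschitz Assumption~\ref{ass:functions}(ii).
\end{itemize}
The theorem then produces a unique solution $w$ that is bounded, lies in $H^{2+\alpha,1+\alpha/2}$ on every bounded cylinder, and satisfies $\lvert w_x\rvert\leq K$ globally.

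The third step, and the main obstacle, is upgrading local Hölder regularity to \emph{global} bounds: bounded $u_{xx}$ and the uniform Hölder constants in Assumption~\ref{ass:sol_PDE}(b). I would first fix the gradient bound $K_{u_x}$ from the a~priori estimate. Then I regard the equation as \emph{linear} with frozen coefficients $\tilde a(s,x)\coloneqq a(s,x,w(s,x))$ and right-hand side $-A(s,x,w,w_x)$. These are bounded and Hölder in $(s,x)$ with constants independent of location, since $w$ and $w_x$ are bounded and Lipschitz/Hölder by the interior estimates. Interior Schauder estimates on unit cylinders $[0,T]\times B_1(x)$, including up to $s=0$ using the local $H^{2+\alpha}$ norm of $g$, then give bounds on $\lVert w\rVert_{2+\alpha}$ that are uniform in $x$.

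One point needs care here: Assumption~\ref{ass:sufficient}(i) only requires $g\in H^{2+\alpha}(D)$ for each bounded $D$. If those norms are not uniform in $D$, the Schauder bound is not uniform. I would handle this either by reading uniformity into the assumption (bounded $H^{2+\alpha}$ norms on unit balls), or by noting that it follows implicitly from the bounded $C^{2+\alpha}$ framework of the cited theorem.

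Finally I read off Assumption~\ref{ass:sol_PDE}(b) from the uniform $H^{2+\alpha,1+\alpha/2}$ bound.
\begin{itemize}
	\item[(i)] $u_{xx}$ is $\alpha$-Hölder in $x$ and $\frac{\alpha}{2}$-Hölder in $t$ directly from the norm.
	\item[(ii)] $u_x$ is Lipschitz in $x$ because $u_{xx}$ is bounded, and $\frac12$-Hölder in $t$ by the standard interpolation $\lvert u_x(t,x)-u_x(t',x)\rvert\lesssim\lvert t-t'\rvert^{(1+\alpha)/2}$. This is valid for small $\lvert t-t'\rvert$ and extends to all of $[0,T]$ by boundedness.
	\item[(iii)] $u$ is $\frac12$-Hölder in $t$ because $u_t$ is bounded, and Lipschitz in $x$ because $u_x$ is bounded.
\end{itemize}
Undoing the time reversal gives $u(t,x)=w(T-t,x)$, a classical solution of \eqref{eq:PDE} with all the properties required by Assumption~\ref{ass:sol_PDE}.
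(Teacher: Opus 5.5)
\textbf{Same route as the paper, up to the gap you flagged.} Your approach is the paper's: reverse time, then invoke \cite[Chapter~V, Theorem~8.1]{Ladyzenskaja_Solonnikov_Uralceva_1968}. You check ellipticity via \eqref{eq:elliptic}, the sign condition via Assumption~\ref{ass:sufficient}(ii) and Young's inequality, and the structure and H\"older conditions. The difference is at the end. The paper asserts in one line that the theorem yields $\tilde u\in H^{1+\frac{\alpha}{2},2+\alpha}([0,T]\times\mathbb{R}^d)$ globally. You instead treat the output as local and add a Schauder step on unit cylinders to globalize.

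There is also a presentational point. Chapter~V is stated for divergence-form equations. You should therefore exhibit $a_i(s,x,w,p)=\frac12\sum_j(\sigma\sigma^\top)_{ij}(T-s,x,w)p_j$ and the lower-order coefficient $A+\sum_i\partial_{x_i}a_i+\sum_i\partial_w a_i\,p_i$ explicitly, as the paper does. This is where Assumption~\ref{ass:sufficient}(iii) is actually used. That coefficient grows quadratically in $p$ (still admissible), not linearly as your item (3) suggests.

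\textbf{The gap: uniformity in $g$.} The genuine gap is the one you flag yourself, and your second way out does not work. Nothing in Assumptions~\ref{ass:functions} and~\ref{ass:sufficient} gives uniform $H^{2+\alpha}$ bounds for $g$ on unit balls. In fact the statement is false without them.

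Any solution of \eqref{eq:PDE} has $u(T,\cdot)=g$. So Assumption~\ref{ass:sol_PDE} at $t=T$ forces $g''=u_{xx}(T,\cdot)$ to be bounded by $K_{u_{xx}}$ and globally $\alpha$-H\"older. Now take $d=1$, $b\equiv0$, $\sigma\equiv1$, $F\equiv0$ and $g(x)=\sin(x^2)(1+x^2)^{-1/2}$.
\begin{itemize}
\item $g$ is smooth, bounded and Lipschitz (indeed $\lvert g'\rvert\leq 3$), so all hypotheses of the statement hold.
\item However, $g''(x)=-4x^2\sin(x^2)(1+x^2)^{-1/2}+O(1)$ is unbounded.
\end{itemize}
Hence no $u$ as in Assumption~\ref{ass:sol_PDE} can exist.

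So your option (a) is not an interpretation but a necessary strengthening of Assumption~\ref{ass:sufficient}(i): you need $g\in H^{2+\alpha}(\mathbb{R}^d)$ with finite global norm. The paper's proof silently relies on the same thing. Membership of $\tilde u$ in the global space $H^{1+\frac{\alpha}{2},2+\alpha}([0,T]\times\mathbb{R}^d)$ already contains $\lVert g\rVert_{H^{2+\alpha}(\mathbb{R}^d)}<\infty$, since $\tilde u(0,\cdot)=g$.

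Under that strengthened hypothesis, your globalization is sound and fills in what the paper takes for granted. The steps are: a uniform gradient bound, a bootstrap to uniform H\"older bounds for the frozen coefficients and for $A(s,x,w,w_x)$, and then local Schauder estimates up to $s=0$.
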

	\begin{proof}
		For $(t,x,y,z)\in[0,T]\times\mathbb{R}^d\times\mathbb{R}\times(\mathbb{R}^d)^*$, define the functions (we set $\bar{t}\coloneqq T-t$)
		\begin{align*}
			a_i(t,x,y,z)&\coloneqq\frac{1}{2}\sum_{k=1}^dz_k\sum_{l=1}^d\sigma_{il}(\bar{t},x,y)\sigma_{lk}(\bar{t},x,y),\quad i=1,\ldots,d,\\
			a(t,x,y,z)&\coloneqq\begin{multlined}[t]
				\sum_{k=1}^dz_kb_k(\bar{t},x,y,z\sigma(\bar{t},x,y))-F(\bar{t},x,y,z\sigma(\bar{t},x,y))\\
				+\frac{1}{2}\sum_{i=1}^d\sum_{k=1}^dz_k\sum_{l=1}^d\left(\frac{\partial\sigma_{il}(\bar{t},x,y)}{\partial x_i}\sigma_{lk}(\bar{t},x,y)+\sigma_{il}(\bar{t},x,y)\frac{\partial\sigma_{lk}(\bar{t},x,y)}{\partial x_i}\right)\\
				+\frac{1}{2}\sum_{i=1}^dz_i\sum_{k=1}^dz_k\sum_{l=1}^d\left(\frac{\partial\sigma_{il}(\bar{t},x,y)}{\partial y}\sigma_{lk}(\bar{t},x,y)+\sigma_{il}(\bar{t},x,y)\frac{\partial\sigma_{lk}(\bar{t},x,y)}{\partial y}\right),
			\end{multlined}\\
			a_{ij}(t,x,y,z)&\coloneqq\frac{1}{2}\sum_{l=1}^d\sigma_{il}(\bar{t},x,y)\sigma_{lj}(\bar{t},x,y),\quad i,j=1,\ldots,d,\\
			A(t,x,y,z)&\coloneqq\sum_{l=1}^dz_lb_l(\bar{t},x,y,z\sigma(t,x,y))-F(\bar{t},x,y,z\sigma(\bar{t},x,y)),\\
			\psi_0(x)&\coloneqq g(x).
		\end{align*}
		Note that $a$ is well-defined, since, by Assumption~\ref{ass:sufficient}(iii), the involved partial derivatives exist. By the same argument, for each $i,j\in\{1,\ldots,d\}$, all of the following partial derivatives exist:
		\begin{align*}
			\frac{\partial a_i(t,x,y,z)}{\partial x_j}&=\frac{1}{2}\sum_{k=1}^dz_k\sum_{l=1}^d\left(\frac{\partial\sigma_{il}(\bar{t},x,y)}{\partial x_j}\sigma_{lk}(\bar{t},x,y)+\sigma_{il}(\bar{t},x,y)\frac{\partial\sigma_{lk}(\bar{t},x,y)}{\partial x_j}\right),\\
			\frac{\partial a_i(t,x,y,z)}{\partial y}&=\frac{1}{2}\sum_{k=1}^dz_k\sum_{l=1}^d\left(\frac{\partial\sigma_{il}(\bar{t},x,y)}{\partial y}\sigma_{lk}(\bar{t},x,y)+\sigma_{il}(\bar{t},x,y)\frac{\partial\sigma_{lk}(\bar{t},x,y)}{\partial y}\right),\\
			\frac{\partial a_i(t,x,y,z)}{\partial z_j}&=\frac{1}{2}\sum_{l=1}^d\sigma_{il}(\bar{t},x,y)\sigma_{lj}(\bar{t},x,y).
		\end{align*}
		The above functions are defined in such a way that the identities
		\begin{gather*}
			A(t,x,y,z)=a(t,x,y,z)-\sum_{i=1}^d\frac{\partial a_i(t,x,y,z)}{\partial x_i}-\sum_{i=1}^d\frac{\partial a_i(t,x,y,z)}{\partial y}z_i,\\
			a_{ij}(t,x,y,z)=\frac{\partial a_i(t,x,y,z)}{\partial z_j},\quad i,j=1,\ldots,d,
		\end{gather*}
		hold. Therefore, we are in the setting of \cite[Chapter~V, Theorem~8.1]{Ladyzenskaja_Solonnikov_Uralceva_1968}, which gives us sufficient conditions for the existence of a classical solution $\tilde{u}\in H^{1+\frac{\alpha}{2},2+\alpha}([0,T]\times\mathbb{R}^d)$\footnote{For a definition of $H^{1+\frac{\alpha}{2},2+\alpha}([0,T]\times\mathbb{R}^d)$, see \cite[7--8]{Ladyzenskaja_Solonnikov_Uralceva_1968}.} of the Cauchy problem
		\begin{align}\label{eq:Cauchy_problem}
			\begin{cases}
				\tilde{u}_t(t,x)=\sum_{i=1}^d\frac{\mathrm{d}}{\mathrm{d}x_i}a_i(t,x,\tilde{u}(t,x),\tilde{u}_x(t,x))-a(t,x,\tilde{u}(t,x),\tilde{u}_x(t,x)),\quad(t,x)\in[0,T]\times\mathbb{R}^d,\\
				\tilde{u}(0,x)=\psi_0(x),\quad x\in\mathbb{R}^d.
			\end{cases}
		\end{align}
		Note that a classical solution $\tilde{u}\in H^{1+\frac{\alpha}{2},2+\alpha}([0,T]\times\mathbb{R}^d)$ fulfills all boundedness, Hölder and Lipschitz conditions of Assumption~\ref{ass:sol_PDE}.
		
		In the following, we show that the above Cauchy problem~\eqref{eq:Cauchy_problem} admits a classical solution $\tilde{u}\in H^{1+\frac{\alpha}{2},2+\alpha}([0,T]\times\mathbb{R}^d)$ by verifying the solvability conditions (a), (b), (c) of \cite[Chapter~V, Theorem~8.1]{Ladyzenskaja_Solonnikov_Uralceva_1968}. In a final step, from $\tilde{u}$, we construct a solution $u$ of the parabolic PDE~\eqref{eq:PDE} which inherits the desired boundedness, Hölder and Lipschitz conditions.
		
		Solvability condition (a) coincides with Assumption~\ref{ass:sufficient}(i). Solvability condition (b) follows from the uniform ellipticity condition~\eqref{eq:elliptic} and the inequality
		\begin{align*}
					\lvert A(t,x,y,0)y\rvert
					&=\lvert F(t,x,y,0)y\rvert\\
					&=\lvert(F(t,x,y,0)-F(t,x,0,0))y+F(t,x,0,0)y\rvert\\
					&\leq H_F\lvert y\rvert^2+K_F\lvert y\rvert\\
					&\leq\left(H_F+\frac{1}{2}\right)\lvert y\rvert^2+\frac{K_F^2}{2},
				\end{align*}
		which uses the Lipschitz condition on $F$, Assumption~\ref{ass:sufficient}(ii) and Young's inequality and holds true for arbitrary $(t,x,y,z)\in[0,T]\times\mathbb{R}^d\times\mathbb{R}\times(\mathbb{R}^d)^*$.
		
		To verify solvability condition (c), first note that all functions
		\begin{align*}
			F,\;\sigma,\;b_k,\;\sigma_{ij},\;\frac{\partial\sigma_{ij}}{\partial x_k},\;\frac{\partial\sigma_{ij}}{\partial y},\qquad i,j,k\in\{1,\ldots,d\},
		\end{align*}
		are $\frac{1}{2}$-Hölder continuous in $t$ and Lipschitz continuous in $x$, $y$ and $z$ by Assumptions~\ref{ass:functions}(ii) and~\ref{ass:sufficient}(iii) and consequently continuous. Since additionally the mappings
		\begin{align*}
			(t,x,y,z)\mapsto\bar{t}=T-t
			\quad\text{and}\quad
			(t,x,y,z)\mapsto z_k,\quad k\in\{1,\ldots,d\},
		\end{align*}
		are obviously continuous, it follows that the functions
		\begin{align*}
			a_i,\;a,\;\frac{\partial a_i}{\partial x_j},\;\frac{\partial a_i}{\partial y},\;\frac{\partial a_i}{\partial z_j},\qquad i,j\in\{1,\ldots,d\},
		\end{align*}
		are continuous.
		
		Furthermore, for $\lvert z\rvert\leq K_z\in\mathbb{R}_{>0}$ it holds that
		\begin{align*}
			&\lvert b_k(\bar{t},x,y,z\sigma(\bar{t},x,y))-b_k(\bar{t}',x',y',z'\sigma(\bar{t}',x',y'))\rvert\\
			&\leq\lvert b(\bar{t},x,y,z\sigma(\bar{t},x,y))-b(\bar{t}',x',y',z'\sigma(\bar{t}',x',y'))\rvert\\
			&\leq H_b(1+K_zH_\sigma+K_\sigma)\bigl(\lvert t-t'\rvert^{1/2}+\lvert x-x'\rvert+\lvert y-y'\rvert+\lvert z-z'\rvert\bigr)
		\end{align*}
		and
		\begin{align*}
			&\lvert F(\bar{t},x,y,z\sigma(\bar{t},x,y))-F(\bar{t}',x',y',z'\sigma(\bar{t}',x',y'))\rvert\\
			&\leq H_b(1+K_zH_\sigma+K_\sigma)\bigl(\lvert t-t'\rvert^{1/2}+\lvert x-x'\rvert+\lvert y-y'\rvert+\lvert z-z'\rvert\bigr),
		\end{align*}
		i.\,e., the mappings $(t,x,y,z)\mapsto b_k(\bar{t},x,y,z\sigma(\bar{t},x,y))$ and $(t,x,y,z)\mapsto F(\bar{t},x,y,z\sigma(\bar{t},x,y))$ are $\frac{1}{2}$-Hölder continuous in $t$ and Lipschitz continuous in $x$, $y$ and $z$. The same also holds, obviously, for the projections $(t,x,y,z)\mapsto z_k$, $k\in\{1,\ldots,d\}$, and, by Assumptions~\ref{ass:functions}(ii) and~\ref{ass:sufficient}(iii), for the mappings
		\begin{align*}
			(t,x,y,z)\mapsto\sigma_{ij}(\bar{t},x,y),\;\frac{\partial\sigma_{ij}}{\partial x_k}(\bar{t},x,y),\;\frac{\partial\sigma_{ij}}{\partial y}(\bar{t},x,y),
			\quad
			i,j,k\in\{1,\ldots,d\},
		\end{align*}
		noting that $\lvert\bar{t}-\bar{t}'\rvert=\lvert t-t'\rvert$. It follows that the functions
		\begin{align*}
			a_i,\;a,\;\frac{\partial a_i}{\partial x_j},\;\frac{\partial a_i}{\partial y},\;\frac{\partial a_i}{\partial z_j},\qquad i,j\in\{1,\ldots,d\},
		\end{align*}
		fulfill the same Hölder and Lipschitz conditions as sum and product of Hölder and Lipschitz continuous functions where in the products all functions are bounded.
		Now, since $t\in[0,T]$, for $\lvert y\rvert\leq K_y\in\mathbb{R}_{>0}$ and $\lvert z\rvert\leq K_z$ it easily follows that these functions fulfill Hölder conditions in $t$, $y$, and $z$ with exponents $\frac{\alpha}{2}$, $\alpha$ and $\alpha$. From Assumptions~\ref{ass:functions}(iii) and~\ref{ass:sufficient}(ii) in connection with $\lvert z\rvert\leq K_z$ it follows that that each $f\in\bigl\{a_i,a,\frac{\partial a_i}{\partial x_j},\frac{\partial a_i}{\partial y},\frac{\partial a_i}{\partial z_j}\mid i,j\in\{1,\ldots,d\}\bigr\}$ can be bounded by some constant $K_f$ uniformly in $(t,x)$, that is for all $(t,x)\in[0,T]\times\mathbb{R}^d$ it holds that
		\begin{align*}
			\lvert f(t,x,0,0)\rvert\leq K_f.
		\end{align*}
		Consequently, each function $f\in\bigl\{a_i,a,\frac{\partial a_i}{\partial x_j},\frac{\partial a_i}{\partial y},\frac{\partial a_i}{\partial z_j}\mid i,j\in\{1,\ldots,d\}\bigr\}$ fulfills a Hölder condition of exponent $\alpha$ in $x$:
		\begin{align*}
			\lvert f(t,x,y,z)-f(t,x',y,z)\rvert
			\leq(H_b\vee 2(H_fK_yK_z+K_f))\lvert x-x'\rvert^\alpha
		\end{align*}
		for all $(t,y,z)\in[0,T]\times\mathbb{R}\times(\mathbb{R}^d)^*$ and any two $x,x'\in\mathbb{R}^d$.
		Lastly, from Assumptions~\ref{ass:functions}(ii)--(iii) and~\ref{ass:sufficient}(ii), we obtain
		\begin{align*}
			&\sum_{i=1}^d\left(\lvert a_i(t,x,y,z)\rvert+\left\lvert\frac{\partial a_i(t,x,y,z)}{\partial y}\right\rvert\right)(1+\lvert z\rvert)+\sum_{i=1}^d\sum_{j=1}^d\left\lvert\frac{\partial a_i(t,x,y,z)}{\partial x_j}\right\rvert+\lvert a(t,x,y,z)\rvert\\
			&\leq\begin{multlined}[t][0.8\displaywidth]
				\sum_{i=1}^d\left(\frac{1}{2}d^2K_\sigma^2\lvert z\rvert+d^2H_\sigma K_\sigma\lvert z\rvert\right)(1+\lvert z\rvert)+\sum_{i=1}^d\sum_{j=1}^dd^2H_\sigma K_\sigma\lvert z\rvert\\
				+dK_b\lvert z\rvert+K_F+H_F(\lvert y\rvert+K_\sigma\lvert z\rvert)+d^3H_\sigma K_\sigma\lvert z\rvert+d^3H_\sigma K_\sigma\lvert z\rvert^2,
			\end{multlined}
		\end{align*}
		i.\,e., for bounded $y$, the LHS admits a bound of the form $\mu(1+\lvert z\rvert)^2$ for a suitable constant $\mu\in\mathbb{R}_{>0}$. Together with the uniform ellipticity condition~\eqref{eq:elliptic}, solvability condition (c) is fulfilled.
		
		Now, noting that
		\begin{align*}
			\frac{\mathrm{d}}{\mathrm{d}x_i}a_i(t,x,\tilde{u}(t,x),\tilde{u}_x(t,x))
			&=\begin{multlined}[t]
				\frac{\partial a_i(t,x,\tilde{u}(t,x),\tilde{u}_x(t,x))}{\partial x_i}+\frac{\partial a_i(t,x,\tilde{u}(t,x),\tilde{u}_x(t,x))}{\partial y}\tilde{u}_{x_i}(t,x)\\
			+\sum_{k=1}^n\frac{\partial a_i(t,x,\tilde{u}(t,x),\tilde{u}_x(t,x))}{\partial z_k}\tilde{u}_{x_ix_k}(t,x)
			\end{multlined}
		\end{align*}
		and a classical solution $\tilde{u}$ fulfills $\tilde{u}_{x_ix_k}(t,x)=\tilde{u}_{x_kx_i}(t,x)$ for all $i,k\in\{1,\ldots,d\}$, from the definition of $a$ it follows that
		\begin{align*}
			\eqref{eq:Cauchy_problem}
			&\Leftrightarrow
				\left\{\begin{aligned}
					\tilde{u}_t(t,x)&=\frac{1}{2}\mathrm{tr}\bigl[\tilde{u}_{xx}(t,x)(\sigma\sigma^\top)(\bar{t},x,\tilde{u}(t,x))\bigr]-F(\bar{t},x,\tilde{u}(t,x),\tilde{u}_x(t,x)\sigma(\bar{t},x,\tilde{u}(t,x)))\\
					&\quad+\tilde{u}_x(t,x)b(\bar{t},x,\tilde{u}(t,x),\tilde{u}_x(t,x)\sigma(\bar{t},x,
					\tilde{u}(t,x))),\quad(t,x)\in[0,T]\times\mathbb{R}^d,\\
					\tilde{u}(0,x)&=g(x),\quad x\in\mathbb{R}^d,
				\end{aligned}\right.\\
			&\Leftrightarrow
				\left\{\begin{aligned}
					\tilde{u}_t(\bar{t},x)&=F(t,x,\tilde{u}(\bar{t},x),\tilde{u}_x(\bar{t},x)\sigma(t,x,\tilde{u}(\bar{t},x)))-\frac{1}{2}\mathrm{tr}\bigl[\tilde{u}_{xx}(\bar{t},x)(\sigma\sigma^\top)(t,x,\tilde{u}(\bar{t},x))\bigr]\\
					&\quad-\tilde{u}_x(\bar{t},x)b(t,x,\tilde{u}(\bar{t},x),\tilde{u}_x(\bar{t},x)\sigma(t,x,\tilde{u}(\bar{t},x))),\quad(t,x)\in[0,T]\times\mathbb{R}^d,\\
					\tilde{u}(\bar{T},x)&=g(x),\quad x\in\mathbb{R}^d,
			\end{aligned}\right.
		\end{align*}
		Consequently, we can define a desired solution $u$ of the parabolic PDE~\eqref{eq:PDE} via
		\begin{align*}
			u&\colon[0,T]\times\mathbb{R}^d\to\mathbb{R},\quad u(t,x)\coloneqq\tilde{u}(\bar{t},x).\qedhere
		\end{align*}	
	\end{proof}
		
	\printbibliography

	\appendix
	\renewcommand*{\sectionformat}{%
	  Appendix~\thesection:\enskip
	}
	\section{Proof of Lemma~\ref{lem:Euler_Scheme}}\label{appendix_A}
	From the recursion of the Euler-Maruyama scheme~\eqref{eq:Euler_extented}, we obtain
	\begin{align*}
		\tilde{X}^{\eta,\pi}_{t_i}&=\tilde{x}_0+\sum_{j=0}^{i-1}\bigl[b^\rho(t_j,\tilde{X}^{\eta,\pi}_{t_j})\Delta_j+\sigma^\rho(t_j,\tilde{X}^{\eta,\pi}_{t_j})\Delta W_j\bigr].
	\end{align*}
	It directly follows that, for all $t\in[0,T]$, we can express $\tilde{X}^{\eta,\pi}_t$ as
	\begin{align}\label{eq:recursion_extendend_integral}
		\tilde{X}^{\eta,\pi}_t=\tilde{x}_0+\int_0^{\Pi(t)}b^\rho(\Pi(s),\tilde{X}^{\eta,\pi}_s)\,\mathrm{d}s+\int_0^{\Pi(t)}\sigma^\rho(\Pi(s),\tilde{X}^{\eta,\pi}_s)\,\mathrm{d}W_s.
	\end{align}
	Let $t\in[0,T]$. Inserting the definition of $\tilde{X}^\eta_t$ and Equation~\eqref{eq:recursion_extendend_integral} for $\tilde{X}^{\eta,\pi}_t$, we obtain
	\begin{align*}
		&\mathbb{E}\bigl[\lvert\tilde{X}^\eta_t-\tilde{X}^{\eta,\pi}_t\rvert^2\bigr]\\
		&\begin{multlined}[t]
				\leq4\Biggl(\mathbb{E}\Biggl[\biggl\lvert\int_0^{\Pi(t)}b^\rho(s,\tilde{X}^\eta_s)-b^\rho(\Pi(s),\tilde{X}^{\eta,\pi}_s)\,\mathrm{d}s\biggr\rvert^2\Biggr]+\mathbb{E}\Biggl[\biggl\lvert\int_{\Pi(t)}^tb^\rho(s,\tilde{X}^\eta_s)\,\mathrm{d}s\biggr\rvert^2\Biggr]\\
				+\mathbb{E}\Biggl[\biggl\lvert\int_0^{\Pi(t)}\sigma^\rho(s,\tilde{X}^\eta_s)-\sigma^\rho(\Pi(s),\tilde{X}^{\eta,\pi}_s)\,\mathrm{d}W_s\biggr\rvert^2\Biggr]
				+\mathbb{E}\Biggl[\biggl\lvert\int_{\Pi(t)}^t\sigma^\rho(s,\tilde{X}^\eta_s)\,\mathrm{d}W_s\biggr\rvert^2\Biggr]\Biggr)
			\end{multlined}\\
		&\eqqcolon 4\bigl((\mathrm{I})+(\mathrm{II})+(\mathrm{III})+(\mathrm{IV})\bigr).
	\end{align*}
	Using Hölder's inequality, the $\frac{1}{2}$-Hölder and Lipschitz continuity of $b^\rho$ and Fubini's theorem, the first term can be bounded by
	\begin{align*}
		(\mathrm{I})
		&\leq\Pi(t)\mathbb{E}\biggl[\int_0^{\Pi(t)}\lvert b^\rho(s,\tilde{X}^\eta_s)-b^\rho(\Pi(s),\tilde{X}^{\eta,\pi}_s)\rvert^2\,\mathrm{d}s\biggr]\\
		&\leq TH_{b^\rho}^2\mathbb{E}\biggl[\int_0^{\Pi(t)}\bigl(\lvert s-\Pi(s)\rvert^{1/2}+\lvert \tilde{X}^\eta_s-\tilde{X}^{\eta,\pi}_s\rvert\bigr)^2\,\mathrm{d}s\biggr]\\
		&\leq2TH_{b^\rho}^2\biggl(\mathbb{E}\biggl[\int_0^{\Pi(t)}\lvert s-\Pi(s)\rvert\,\mathrm{d}s\biggr]+\mathbb{E}\biggl[\int_0^{\Pi(t)}\lvert \tilde{X}^\eta_s-\tilde{X}^{\eta,\pi}_s\rvert^2\,\mathrm{d}s\biggr]\biggr)\\
		&\leq2TH_{b^\rho}^2\biggl(T\lvert\pi\rvert+\int_0^t\mathbb{E}\bigl[\lvert\tilde{X}^\eta_s-\tilde{X}^{\eta,\pi}_s\rvert^2\bigr]\,\mathrm{d}s\biggr).
	\end{align*}
	For $(\mathrm{III})$, using Itô's isometry, the Hölder and Lipschitz condition on $\sigma^\rho$ and Fubini's theorem, we obtain
	\begin{align*}
		(\mathrm{III})
		&=\mathbb{E}\biggl[\int_0^{\Pi(t)}\lvert\sigma^\rho(s,\tilde{X}^\eta_s)-\sigma^\rho(\Pi(s),\tilde{X}^{\eta,\pi}_s)\rvert^2\,\mathrm{d}s\biggr]\\
		&\leq H_{\sigma^\rho}^2\mathbb{E}\biggl[\int_0^{\Pi(t)}\bigl(\lvert s-\Pi(s)\rvert^{1/2}+\lvert\tilde{X}^\eta_s-\tilde{X}^{\eta,\pi}_s\rvert\bigr)^2\,\mathrm{d}s\biggr]\\
		&\leq2H_{\sigma^\rho}^2\mathbb{E}\biggl[\int_0^{\Pi(t)}\lvert s-\Pi(s)\rvert+\lvert\tilde{X}^\eta_s-\tilde{X}^{\eta,\pi}_s\rvert^2\,\mathrm{d}s\biggr]\\
		&\leq2H_{\sigma^\rho}^2\biggl(T\lvert\pi\rvert+\int_0^t\mathbb{E}\bigl[\lvert\tilde{X}^\eta_s-\tilde{X}^{\eta,\pi}_s\rvert^2\bigr]\,\mathrm{d}s\biggr).
	\end{align*}
	Applying Hölder's inequality and noticing that $b^\rho$ fulfills the linear growth condition of Lemma~\ref{lem:linear_growth}, we see that the second term fulfills
	\begin{align*}
		(\mathrm{II})
		\leq\lvert\pi\rvert L_{b^\rho}^2\mathbb{E}\biggl[\int_{\Pi(t)}^t1+\lvert\tilde{X}^\eta_s\rvert^2\,\mathrm{d}s\biggr]
		\leq\lvert\pi\rvert^2L_{b^\rho}^2(1+c_6),
	\end{align*}
	where the last inequality follows from Lemma~\ref{lem:supremum-Xtilde}.
	Similarly, now using Itô's isometry and boundedness of $\sigma^\rho$ from Lemma~\ref{lem:linear_growth}, for (IV) we obtain
	\begin{align*}
		(\mathrm{IV})
		\leq\mathbb{E}\biggl[\int_{\Pi(t)}^tK_{\sigma^\rho}^2\,\mathrm{d}s\biggr]
		\leq K_{\sigma^\rho}^2\lvert\pi\rvert.
	\end{align*}
	Putting all bounds together yields
	\begin{multline*}
		\mathbb{E}\bigl[\lvert\tilde{X}^\eta_t-\tilde{X}^{\eta,\pi}_t\rvert^2\bigr]
		\leq8(TH_{b^\rho}^2+H_{\sigma^\rho}^2)\int_0^t\mathbb{E}\bigl[\lvert\tilde{X}^\eta_s-\tilde{X}^{\eta,\pi}_s\rvert^2\bigr]\,\mathrm{d}s\\
		+4\bigl(2T(TH_{b^\rho}^2+H_{\sigma^\rho}^2)+L_{b^\rho}^2\lvert\pi\rvert(1+c_6)+K_{\sigma^\rho}^2\bigr)\lvert\pi\rvert.
	\end{multline*}
	Applying Gronwall's lemma finishes the proof.

	\section{Upper constant bounds depending on \texorpdfstring{$K_\rho$}{Kρ} and \texorpdfstring{$H_\rho$}{Hρ}}\label{appendix_B}
	\begin{longtable}{
		*{7}{|>{$}c<{$}}|}
		\hline
		& C(1+K_\rho^p) & C(1+H_\rho) & C(1+K_\rho+H_\rho) & Ce^{CK_\rho^p} & Ce^{C(K_\rho^2+H_\rho^2)}\\
		\hline\endfirsthead
						
		\hline
		& C(1+K_\rho^p) & C(1+H_\rho) & C(1+K_\rho+H_\rho) & Ce^{CK_\rho^p} & Ce^{C(K_\rho^2+H_\rho^2)}\\
		\hline\endhead

		\hline
    		\endfoot
    		
    		\hline
    		\endlastfoot
		
		L_{b^\rho}				& \crossone & & & & \\
		K_{\sigma^\rho}			& \crossone & & & & \\
		H_{b^\rho}				& & \times & & & \\
		H_{\sigma^\rho}			& & \times & & & \\
		H_\zeta^{(t,x)}			& \crossone & & & & \\
		H_\zeta^{(y,z)}			& \crossone & & & & \\
		H_1						& \crossone & & & & \\
		H_2						& & & \times & & \\
		\hdashline
		\alpha_\pi				& \crosstwo & & & & \\
		\beta_\pi				& \crosstwo & & & & \\
		\Gamma					& \crossfour & & & & \\
		d_1						& & & & \crossfour & \\
		d_2						& & & & \crosstwo & \\
		d_3						& & & & \crosstwo & \\
		d_4						& & & & \crosstwo & \\
		d_5						& & & & \crosstwo & \\
		\tilde{d}_2				& & & & \crosstwo & \\
		\tilde{d}_3				& & & & \crosstwo & \\
		\hdashline
		c_1						& & & & \crossfour & \\
		c_2						& & & & \crossfour & \\
		c_3						& & & & & \times \\
		c_4						& & & & & \times \\
		c_5						& & & & & \times \\
		c_6						& & & & \crosstwo & \\
		c_7						& & & & \crossfour & \\
		c_8						& & & & \crossfour & \\
		c_9						& & & & & \times \\
		\tilde{c}_4				& & & & & \times \\
		\tilde{c}_5				& & & & & \times \\
		\tilde{c}_9				& & & & & \times \\
	\end{longtable}
	
	For all other constants, including $H_3$, $c_{10}$ and $c_{11}$, there exist upper bounds that do not depend on $K_\rho$ or $H_\rho$.
\end{document}